\setlist[itemize]{topsep=0ex,itemsep=0.5ex,parsep=0ex}
\setlist[enumerate]{topsep=0ex,itemsep=0.5ex,parsep=0ex}
\def\NAT@spacechar{~}
\crefname{lemma}{Lemma}{Lemmas}
\crefname{theorem}{Theorem}{Theorems}
\crefname{corollary}{Corollary}{Corollaries}
\crefname{proposition}{Proposition}{Propositions}
\crefname{figure}{Figure}{Figures}
\theoremstyle{plain}
\newtheorem{theorem}{Theorem}
\newtheorem{proposition}[theorem]{Proposition}
\newtheorem{lemma}[theorem]{Lemma}
\newtheorem{corollary}[theorem]{Corollary}
\newtheorem{observation}[theorem]{Observation}
\newenvironment{proofofclaim}[1][Proof.]{%
    \begin{proof}[{#1}]%
        }{%
    \end{proof}}
\theoremstyle{definition}
\DeclareMathOperator{\dist}{dist}
\DeclareMathOperator{\diam}{diam}
\DeclareMathOperator{\wdiam}{wdiam}
\DeclareMathOperator{\ANdim}{ANdim}
\DeclareMathOperator{\Int}{Int}
\DeclareMathOperator{\Part}{Part}
\DeclareMathOperator{\Parts}{Parts}
\DeclareMathOperator{\UNW}{unweighted}
\DeclareMathOperator{\rad}{rad}
\DeclareMathOperator{\asdim}{asdim}
\newcommand{\scr}[1]{\mathcal{#1}}
\newcommand{\ds}[1]{\mathbb{#1}}
\newcommand{\eps}{\epsilon}
\newcommand{\UWT}[2]{#1\langle #2\rangle}
\newcommand{\WT}[2]{#1\langle \widehat{#2}\rangle}
\newcommand{\defn}[1]{\textcolor{Maroon}{\emph{#1}}}
\newcommand{\mathdefn}[1]{\textcolor{Maroon}{#1}}
\renewcommand{\thefootnote}{\arabic{footnote}}
\renewcommand{\thefootnote}{\fnsymbol{footnote}}
\renewcommand{\geq}{\geqslant}
\renewcommand{\leq}{\leqslant}
\begin{document}

\title{\bf Proper Minor-Closed Classes of Graphs\\ have Assouad--Nagata Dimension 2}
\author{Marc Distel\,\footnotemark[2]}
\date{\today}

\footnotetext[2]{School of Mathematics, Monash University, Melbourne, Australia (\texttt{marc.distel@monash.edu}). Research supported by Australian Government Research Training Program Scholarship.}

\maketitle

\abstract{Asymptotic dimension and Assouad--Nagata dimension are measures of the large-scale shape of a class of graphs. Bonamy, Bousquet, Esperet, Groenland, Liu, Pirot, and Scott~[\emph{J.~Eur.~Math.~Society}] showed that any proper minor-closed class has asymptotic dimension 2, dropping to 1 only if the treewidth is bounded. We improve this result by showing it also holds for the stricter Assouad--Nagata dimension. We also characterise when subdivision-closed classes of graphs have bounded Assouad--Nagata dimension.}

\setlength{\parindent}{0pt}
\setlength{\parskip}{10pt}

\section{Introduction}
\renewcommand{\thefootnote}{\arabic{footnote}}

    Asymptotic dimension, denoted by $\asdim$, is a measure of the large-scale shape of a metric space. First introduced by \citet*{Gromov1993} for the geometric study of groups, it has since been studied for metrics induced by graphs. \citet*{Bonamy} established the following breakthrough result improving or generalising previous work in \citep{Bell2008,Fujiwara2021,Ostrovskii2015}.
    
    \begin{theorem}[\citep{Bonamy}]
        \label{asdimMCC}
        For every proper minor-closed class of graphs $\scr{G}$,
        \begin{enumerate}[label=(\alph*)]
            \item $\asdim(\scr{G})\leq 2$; and
            \item $\asdim(\scr{G})\leq 1$ if and only if $\scr{G}$ has bounded treewidth.
        \end{enumerate}
    \end{theorem}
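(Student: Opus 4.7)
The plan is to split the theorem into the upper bound (a) and the characterisation (b), and attack each separately. For (a), the natural tool is the Graph Minor Product Structure Theorem (Dujmović, Joret, Micek, Morin, Ueckerdt, Wood): every $G\in\scr{G}$ is isomorphic to a subgraph of $(H\boxtimes P)+K_a$, where $\text{tw}(H)\le c$, $P$ is a path, and $a,c$ depend only on $\scr{G}$. Since a bounded number of apex vertices can be absorbed (they only contribute bounded shortcuts in the graph metric), it suffices to prove $\asdim(H\boxtimes P)\le 2$ with a control function depending only on $c$. To do this, for each scale $r$ I would fix a length $L\gg r$ and slice $P$ into alternating blocks of length $L$ and buffer gaps of length $r$, then offset the slicing to produce two $r$-disjoint cover families of $P$ with clusters of diameter roughly $L$. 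Within each $L$-block $I$, the graph $(H\boxtimes P)[V(H)\times I]$ has treewidth bounded in terms of $c$ and $L$ (hence in terms of $r$), so by the classical fact that bounded-treewidth graphs have $\asdim\le 1$ with uniform control, I can $r$-cover it with two bounded-diameter families. Combining across blocks yields three $r$-disjoint cover families and hence $\asdim(H\boxtimes P)\le 2$.

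For (b), the easy direction is that bounded treewidth implies $\asdim\le 1$, which follows from a tree-decomposition argument combined with the fact that trees have $\asdim\le 1$. For the converse, suppose $\scr{G}$ has unbounded treewidth. By the Robertson--Seymour Grid Minor Theorem, for every $n$ some $G\in\scr{G}$ contains the $n\times n$ grid as a minor, and since $\scr{G}$ is minor-closed the grid itself lies in $\scr{G}$. A direct Lebesgue-covering argument (or appeal to the known fact that $\mathbb{Z}^2$ has $\asdim=2$, realised uniformly in the grids) then shows that these grids prevent $\asdim(\scr{G})\le 1$, giving $\asdim(\scr{G})=2$ together with part (a).

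The main obstacle is delivering a truly uniform bound in (a): one must ensure the control function for $\asdim(H\boxtimes P)\le 2$ depends only on $c=\text{tw}(H)$, not on $H$, the path $P$, or the scale $r$. This is delicate because the block-induced subgraphs have treewidth that grows with $r$ (roughly $cL$), yet the diameter of their clusters must remain bounded by an explicit function of $r$. Managing this trade-off, typically by recursing on the tree decomposition with an induction that simultaneously tracks treewidth and cluster diameter, is the technical heart of the argument, and is also what must be refined in order to upgrade $\asdim$ to Assouad--Nagata dimension as pursued in the remainder of this paper.
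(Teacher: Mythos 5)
The paper does not reprove this theorem; it is cited from Bonamy et al., whose proof (and this paper's strengthening to Assouad--Nagata dimension in \cref{ANdimMCCWeighted}) proceeds via the Robertson--Seymour Graph Minor Structure Theorem, not via graph product structure. So your route is genuinely different --- but it rests on a false premise. The product structure theorem you invoke, that every graph in a proper minor-closed class $\scr{G}$ embeds as a subgraph of $(H\boxtimes P)+K_a$ with $\operatorname{tw}(H)\le c$ for constants $a,c$ depending only on $\scr{G}$, does not hold in this clean form for general proper minor-closed classes. For a concrete counterexample, let $G_n$ be the $n\times n$ grid plus one universal apex vertex $u$, and glue $m>a$ disjoint copies along a path by identifying single vertices; the result is $K_6$-minor-free for every $n,m$. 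If it were a subgraph of $(H\boxtimes P)+K_a$ with $\operatorname{tw}(H)\le c$, then some copy's apex $u_i$ would lie at a vertex $(h_i,p_i)$ of $H\boxtimes P$, forcing all of $G_n$ except at most $a$ vertices into $N_H^1(h_i)\times N_P^1(p_i)$, which has treewidth at most $3(c+1)-1$; hence $n\le 3c+2+a$, a contradiction once $n$ is large. The correct product structure statement for general $K_t$-minor-free graphs (Dujmovi\'c, Esperet, Morin, Wood) therefore has an unavoidable additional layer: the graph is built by clique-sums (equivalently, has a tree-decomposition of bounded adhesion) of pieces of the form $(J_i\boxtimes P_i)+K_a$. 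Handling that tree-of-clique-sums layer --- with control functions that do not degrade as the tree depth grows --- is precisely the hard technical content of the Bonamy et al.\ argument and of \cref{ANdimConstructable}/\cref{ANdimSCLemma} here, so your plan does not actually sidestep the difficulty; it suppresses it.

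Two further remarks. First, your slicing argument for $\asdim(H\boxtimes P)\le 2$ is essentially the standard product formula for asymptotic dimension, which is fine for an $\asdim$ bound; but because the block-induced subgraphs have treewidth growing linearly in $r$, the resulting control function is roughly quadratic in $r$ rather than a dilation, so even where the simple product structure does apply (e.g.\ bounded genus), this sketch would not by itself give the Assouad--Nagata bound pursued later in the paper --- you correctly flag this, and it is a real issue rather than a cosmetic one. Second, your part (b) is essentially right and matches the standard argument: bounded treewidth gives $\asdim\le 1$, and for the converse the Grid Minor Theorem plus the fact that $2$-dimensional grids realise $\asdim=2$ uniformly shows unbounded treewidth forces $\asdim\ge 2$.
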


    See \cref{secasdim} for a formal definition of asymptotic dimension.

    Asymptotic dimension has also been studied for various non-minor-closed classes in \citep{Bonamy,Papasoglu2023,Dvorak2022}.

    Another parameter strongly related to asymptotic dimension is Assouad--Nagata dimension, denoted by $\ANdim$. Assouad--Nagata dimension can be viewed as a linear strengthening of asymptotic dimension, although Assouad--Nagata dimension was originally introduced by \citet*{Nagata1958} prior to the work of \citet*{Gromov1993} and the definition of asymptotic dimension. The original definition of Assouad--Nagata dimension is instead analogous to Lebesgue dimension; it is due to an equivalent definition introduced by \citet*{Assouad1982} that we can view Assouad--Nagata dimension as analogous to asymptotic dimension instead. See \cref{secasdim} for a formal definition.

    Asymptotic and Assouad--Nagata dimension have applications in geometry group theory \citep{MR2383529,MR2596064,Gromov1993} and for embeddings into Banach spaces \citep{MR2367208,Lang2005}. Furthermore, Assouad--Nagata dimension has recently been tied to finding good approximations for the travelling salesman problem \citep{Erschler}. It is immediate from the definition of Assouad--Nagata dimension that it is at least the asymptotic dimension, however no upper bound on the Assouad--Nagata dimension in terms of the asymptotic dimension is possible; indeed, the class of graphs that can be embedded in the plane with at most one crossing per edge has asymptotic dimension 2 but infinite Assouad--Nagata dimension \citep{Bonamy}. 
    
    \citet*{Bonamy} also showed the following.
    \begin{lemma}
        \label{ANdimGenus}
        For every integer $g\geq 0$, the class of weighted graphs with Euler genus at most $g$ has Assouad--Nagata dimension 2.
    \end{lemma}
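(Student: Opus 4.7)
The plan is to induct on the Euler genus $g$, maintaining linear-scale control throughout: a witness to $\ANdim\leq 2$ at scale $r$ is a 3-colouring in which each colour class decomposes into parts of diameter at most $cr$, with distinct same-coloured parts at distance greater than $r$, and the constant $c$ depends only on $g$ (not on $r$ or on the particular graph). The matching lower bound $\ANdim\geq 2$ is immediate from the existence of a planar weighted graph with $\ANdim=2$, so I focus on the upper bound.

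For the base case $g=0$, I would sharpen the planar $\asdim\leq 2$ argument of \citet{Bonamy} to linear scale. Pick a root $v_0$, define weighted ``annuli'' $A_k=\{v:d_G(v_0,v)\in[kLr,(k+1)Lr)\}$ for a sufficiently large constant $L=L(g)$, and colour $A_k$ by $k\bmod 3$; same-coloured annuli are then $r$-separated, so it suffices to linearly 2-colour the subgraph induced by each annulus. The core sublemma is that the weighted plane graph induced on a single annulus has $\ANdim\leq 1$ with linear control, which I would prove by cutting the annulus along $O(1)$ geodesics running from its inner boundary to its outer boundary: the complementary region is a topological disc of weighted diameter $O(Lr)$ and so 2-colours trivially at scale $r$, while the geodesic ``spine'' is a bounded union of shortest paths and has linear-scale $\ANdim\leq 1$.

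For the inductive step with $g\geq 1$, let $v_0$ be a weighted BFS root in $G$ and find a noncontractible simple closed curve in the embedding realised as a cycle $C$ covered by a bounded number of $v_0$-geodesics (this exists via the classical shortest non-contractible cycle construction through a BFS root). Cutting the surface along $C$ strictly reduces the Euler genus, so every component of $G-V(C)$ embeds with Euler genus at most $g-1$, and by induction has linear-scale $\ANdim\leq 2$ with a Nagata constant depending only on $g-1$. Since $V(C)$ is covered by $O(1)$ geodesics, it has linear-scale $\ANdim\leq 1$. A Hurewicz-type union lemma then combines these two pieces into a linear-scale $\ANdim\leq 2$ colouring of $G$.

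The main obstacle is ensuring linearity survives every combination step. The key ingredient is a quantitative union lemma of the form: if $V(G)=U_1\cup U_2$ with $U_1$ of linear-scale $\ANdim\leq 2$ and Nagata constant $c_1$, and $U_2$ of linear-scale $\ANdim\leq 1$ and Nagata constant $c_2$, then $G$ has linear-scale $\ANdim\leq 2$ with a Nagata constant depending only on $c_1$ and $c_2$. Both the annular base case and the genus-reduction step rely on this lemma, so proving (or citing) it in a scale-linear form is the crucial step. A secondary subtlety, present only in the weighted setting, is that a single edge can ``jump'' across many BFS annuli; handling this requires reassigning jump-edge endpoints to adjacent annuli and absorbing the resulting loss of separation into the constants, much as in the asymptotic-dimension argument.
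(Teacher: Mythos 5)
The paper does not prove \cref{ANdimGenus}; it cites it directly from \citet{Bonamy} and uses it as a black box in \cref{ANdimAlmostEmbeddable}, so there is no in-paper proof to compare against and your proposal has to be judged on its own terms.

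The concrete gap is in your planar base case, in the sublemma that an annulus has $\ANdim\leq 1$ at linear scale. You claim that cutting $A_k=\{v:\dist_G(v_0,v)\in[kLr,(k+1)Lr)\}$ along $O(1)$ geodesics from inner to outer boundary leaves a topological disc of weighted diameter $O(Lr)$. This is false: the annulus has radial width $O(Lr)$, but its circumferential extent grows like $kLr$, so the disc produced by a constant number of radial cuts still has weak diameter $\Theta(kLr)$, which is unbounded as $k\to\infty$ at a fixed scale $r$. To chop the annulus into pieces of diameter $O(Lr)$ one needs $\Theta(k)$ cuts, at which point the ``geodesic spine'' is no longer a bounded union of shortest paths and its $\ANdim\leq 1$ is no longer automatic. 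In the weighted setting---which is the entire content of \cref{ANdimGenus} beyond the unweighted case---the shortcut ``bounded radius implies bounded treewidth'' via \cref{twGenusRadius} is not directly available, and the annulus sublemma needs something closer to what the paper arranges for near-boundary regions of almost-embeddable graphs: a partition of the annulus into $O(Lr)$-radius parts whose quotient has bounded treewidth (cf.\ \cref{partitionNeighbourhood,twFacialPartition}), fed into \cref{ANdimPartition}. Two secondary issues also need attention: the mod-$3$ annulus colouring followed by an independent $2$-colouring of each annulus gives six colours, i.e.\ only $\ANdim\leq 5$, so reaching $\ANdim\leq 2$ requires the genuine Hurewicz-type mapping theorem at linear scale rather than a product of colourings; and the union lemma you invoke in the genus step must be stated and proved (or cited precisely) in a form where the output Nagata constant depends only on $g$ and the constants of the pieces, not on recursion depth.
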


    The following question arises naturally from the work of \citet*{Bonamy}: can \cref{asdimMCC} be strengthened by replacing asymptotic dimension with Assouad--Nagata dimension. While it was already known that the class of $K_t$-minor-free graphs has Assouad--Nagata dimension at most $4^t-1$ \citep{Ostrovskii2015}, we show that the bound in \cref{asdimMCC} can be matched exactly in the stronger setting of Assouad--Nagata dimension.

    \begin{theorem}
        \label{ANdimMCC}
        For every proper minor-closed class of graphs $\scr{G}$,
        \begin{enumerate}[label=(\alph*)]
            \item $\ANdim(\scr{G})\leq 2$; and
            \item $\ANdim(\scr{G})\leq 1$ if and only if $\scr{G}$ has bounded treewidth.
        \end{enumerate}
    \end{theorem}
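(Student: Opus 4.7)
The plan is to follow the general strategy of Bonamy et al.\ for asymptotic dimension, but carefully tracking a linear dependence on the scale parameter $r$ at every stage so that the resulting partition constants are uniform in $r$. Since \cref{ANdimGenus} already supplies the bounded-genus base case, the remaining task is to upgrade the genus-to-minor-closed reduction to a scale-uniform version.

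For part (b), the easier direction --- bounded treewidth implies $\ANdim(\scr{G})\leq 1$ --- follows from a standard $2$-colouring of a weight-respecting tree decomposition by depth parity, which at each scale $r$ produces parts of diameter $O(r)$ that are pairwise $r$-separated within each colour class, with constants uniform in $r$. For the converse, a proper minor-closed class of unbounded treewidth contains arbitrarily large grid minors by the Grid Minor Theorem, and arbitrarily large grids already witness $\asdim\geq 2$, and hence $\ANdim\geq 2$.

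For part (a), I would apply a structural decomposition --- either the Robertson--Seymour structure theorem or a suitable graph product structure theorem for minor-closed classes --- to reduce an arbitrary graph in $\scr{G}$ to torsos that are almost embeddable in a surface of bounded Euler genus (with a bounded number of apex vertices and bounded-width vortices), glued along bounded adhesions in a tree decomposition. The proof then splits into three modular lemmas: (i) almost-embeddable graphs have $\ANdim\leq 2$ with a scale-uniform constant, using \cref{ANdimGenus} as a black box together with an additive argument for the bounded apex set and a pathwidth-based treatment of the vortices; (ii) clique-sums along bounded adhesions preserve $\ANdim\leq 2$ with a uniform Assouad--Nagata constant; and (iii) this clique-sum step is iterated across the entire decomposition tree simultaneously at each scale $r$.

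The main obstacle is this scale-uniform combining step. For asymptotic dimension one may tolerate the diameter bound degrading arbitrarily with the depth of the decomposition tree, but Assouad--Nagata requires that bound to remain linear in $r$ with a constant independent of the decomposition. The partitions on individual torsos must therefore be built coherently --- agreeing on adhesion sets so that they can be stitched together without blow-up --- rather than independently, and the vortex attachments have to be shown not to create shortcuts of length super-linear in $r$ that would spoil the estimate. Making this coherent construction uniform across an unbounded decomposition tree, without the Assouad--Nagata constant accumulating, is where the proof must work hardest.
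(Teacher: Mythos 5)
Your high-level plan matches the paper's strategy: both replace the asymptotic-dimension argument of Bonamy et al.\ with a scale-linear version, both reduce via the Graph Minor Structure Theorem to almost-embeddable torsos glued along bounded adhesions, and you correctly identify the central difficulty --- keeping the constant uniform across an unbounded decomposition tree by forcing the colourings on different pieces to agree on adhesion sets. However, you have left open precisely the steps that constitute the actual content of the paper, and one of your sketched sub-steps as stated would not go through.

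Concretely, for part~(b) you propose a ``$2$-colouring of a weight-respecting tree decomposition by depth parity.'' In a weighted graph, consecutive bags in a tree decomposition give essentially no control on distances: two vertices in bags of the same depth parity can be arbitrarily close in $G$, so depth parity does not yield $r$-separation within a colour class, and conversely a single bag can have large weak diameter so depth alone does not bound diameter either. The paper's actual argument (\cref{ANdimTw} via \cref{ANdimPartition} and \cref{ANdimSC}) is a careful recursion that, at each adhesion set, extends a prescribed colouring inwards and installs a monochromatic ``barrier'' ring (\cref{boundComponentsDiameter}) so that monochromatic $r$-components cannot leak across the separator; nothing this delicate is visible in the depth-parity sketch. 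Similarly, your steps~(ii) and~(iii) --- that bounded-adhesion clique-sums preserve $\ANdim\leq 2$ with a uniform constant and that this can be iterated coherently --- are asserted but not proved, and these are exactly where the paper's new tools live: the path-rerouting machinery (\cref{rerouting}, \cref{rpathInsideCentred}, \cref{colouringMinusCentredSet}), the barrier construction, and the double induction in \cref{ANdimSCLemma}. So your diagnosis of \emph{where} the proof must work hardest is accurate, but the proposal does not yet contain a mechanism that does that work. Finally, for the vortices, ``a pathwidth-based treatment'' also understates what is needed: the paper does not simply invoke bounded pathwidth of the vortex, but builds a shallow partition of a neighbourhood of the boundary (\cref{partitionNeighbourhood}, \cref{twFacialPartition}, \cref{twSeparation}) and feeds it into a separation lemma (\cref{colouringSeparation}), because the union of the embedded part and the vortices is what must be coloured, and that union has neither bounded genus nor bounded treewidth.
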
 

    We conclude this paper with an exact characterisation of when subdivision-closed classes of graphs have bounded Assouad--Nagata dimension.

    \begin{theorem}
        \label{ANdimSubdivide}
        Let $\scr{G}$ be a subdivision-closed graph class. Then $\scr{G}$ has bounded Assouad--Nagata dimension if and only if there is a graph $H$ such that $\scr{G}$ is $H$-minor-free.
    \end{theorem}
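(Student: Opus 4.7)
The forward direction is immediate from Theorem~\ref{ANdimMCC}: if $\scr{G}$ is $H$-minor-free, then $\scr{G}$ is a subclass of the proper minor-closed class of all $H$-minor-free graphs, so $\ANdim(\scr{G}) \leq 2$.

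For the reverse direction, I would suppose that $\scr{G}$ is subdivision-closed and, for every graph $H$, some element of $\scr{G}$ contains $H$ as a minor, and aim to show $\ANdim(\scr{G}) = \infty$. The plan is to construct, for each $t$, a graph $G_t^* \in \scr{G}$ whose shortest-path metric contains, as a metric subspace, a copy of the scaled $t$-dimensional Hamming hypercube $C \cdot Q_t$, where $Q_t = (\{0,1\}^t, d_H)$ and $C$ is a large scaling factor. Since Assouad--Nagata dimension is non-increasing under metric subspaces and the family $\{Q_t\}_t$ has unbounded Assouad--Nagata dimension, this will force $\ANdim(\scr{G}) = \infty$.

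To construct $G_t^*$, pick $G_t \in \scr{G}$ containing $K_{2^t}$ as a minor, and fix a minor model with branch sets $B_1, \ldots, B_{2^t}$ and, for each pair $i \neq j$, one chosen edge $e_{ij}$ of $G_t$ with one endpoint in $B_i$ and one in $B_j$. Identify $\{1, \ldots, 2^t\}$ with $\{0,1\}^t$ via binary expansion and write $b_i$ for the label of $B_i$. Let $G_t^*$ be the subdivision of $G_t$ obtained by replacing each connecting edge $e_{ij}$ with a path of length $C \cdot d_H(b_i, b_j)$ and every other edge of $G_t$ with a path of much greater length $L \gg Ct$, where $C$ is chosen much larger than $|V(G_t)|$. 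A shortest-path analysis, using the triangle inequality for Hamming distance (to rule out detours via other branches) and $L \gg Ct$ (to rule out detours through non-minor-model edges), shows that the representative vertices $v_i \in B_i$ satisfy $d_{G_t^*}(v_i, v_j) = C \cdot d_H(b_i, b_j) + O(|V(G_t)|)$. For $C$ large enough relative to $|V(G_t)|$, the subspace $\{v_i\}$ with its induced metric is bilipschitz-close, on the scales relevant for certifying high Assouad--Nagata dimension, to the scaled hypercube $C \cdot Q_t$, so the lower bound on $\ANdim(Q_t)$ transfers to $\ANdim(G_t^*)$.

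The main obstacle is the careful bookkeeping of constants: the additive error $O(|V(G_t)|)$ is innocuous only at scales much larger than $|V(G_t)|$, so the obstruction to $(n+1)$-family covers of $Q_t$ must be witnessed at these larger scales, and one must verify that the covering lower bound survives a small additive perturbation of the metric. A supporting technical step is to verify the uniform unboundedness of $\ANdim(Q_t)$ as $t \to \infty$, which follows from a standard packing argument comparing Hamming-ball volumes against $2^t$ at scale $r = \Theta(t)$; if this witness proves cumbersome, it can be replaced by any other family of metric spaces realisable on $K_t$-minor branches and known to have unbounded Assouad--Nagata dimension, such as scaled high-dimensional $\ell^1$-cubes.
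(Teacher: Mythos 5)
Your forward direction is correct and matches the paper's (it is exactly an application of \cref{ANdimMCCWeighted}(a)). For the reverse direction your high-level plan --- realize a family of hard metrics as minors, simulate their metrics by subdividing, transfer the lower bound --- is the same idea as the paper's, but the execution diverges and contains a gap.

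The gap: you write that $G_t^*$ is obtained by ``replacing each connecting edge $e_{ij}$ with a path of length $C\cdot d_H(b_i,b_j)$ and \emph{every other edge of $G_t$} with a path of much greater length $L\gg Ct$.'' Since you only assume $K_{2^t}\leq G_t$ as a minor (not a topological minor), the branch sets $B_i$ are generally not singletons, so reaching the endpoint of $e_{ij}$ inside $B_i$ from your representative $v_i$ requires traversing intra-branch edges. If those are also stretched to length $L\gg Ct$, the metric is destroyed and the claimed estimate $d_{G_t^*}(v_i,v_j)=C\cdot d_H(b_i,b_j)+O(|V(G_t)|)$ is false --- the error is $\Theta(L)$. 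To fix this you must treat three edge types separately: connecting edges, intra-branch edges (which must stay short), and non-model edges (which must be long). This is precisely the care taken in \cref{minorRealWeights}, where intra-branch edges get weight $\eps/p$, connecting edges get weight $1$, and non-model edges get weight larger than the target diameter, yielding a clean multiplicative $(1+\eps)$ distortion of $d_H$ rather than the additive $O(|V(G_t)|)$ error whose bookkeeping you correctly worry about.

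Beyond that gap, the paper's route through weighted graphs is more modular and avoids most of your constant-chasing: it first proves (\cref{dimNonMCC}(a)) that $\asdim(\scr{G}_{\ds{R}^+})=\infty$ via \cref{minorRealWeights} and \cref{remapping}, then converts rational weights to integer weights using the dilation property (\cref{rationalToIntegerWeights}) to get $\ANdim(\scr{G}_{\ds{N}})=\infty$, and finally realizes integer weights by subdivision --- making the point where the dilation hypothesis enters completely explicit. The paper also leaves the witness family abstract (``any class of connected graphs with infinite asymptotic dimension,'' instantiated as grids with a citation), whereas your choice of Hamming hypercubes via $K_{2^t}$-minors forces an exponential blow-up in the minor size and requires you to separately certify the unbounded Assouad--Nagata dimension of $\{Q_t\}$ --- a step you yourself flag as potentially cumbersome and which the paper sidesteps by citing the grid result directly.
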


    \cref{ANdimSubdivide} immediately implies the previously mentioned result of \citet*{Bonamy} that the class of graphs that can be embedded in the plane with at most one crossing per edge has infinite Assouad--Nagata dimension, since this class is subdivision-closed and admits every graph $H$ as a minor.

    This paper uses many similar techniques to \citet*{Bonamy}; in particular \cref{ANdimSCLemma} loosely follows the method of Lemma~3.2 in \citet*{Bonamy}. Our novel contribution to this method is to use the ``completion" of the graph, where edges in the torso are weighted in order to preserve the metric exactly, as opposed to the gadgets used in \citet*{Bonamy} which only approximated the metric. Our other major novel contribution in this paper regards how almost-embeddedable graphs are handled; the technique used in \citet*{Bonamy} cannot be applied, and while the vortices and the graph embedded on the surface can be handled individually with relevant ease, handling their union is challenging as the union would normally destroy any control functions. We solve this issue by showing that neighbourhood around where the vortices meet the embedded graph is ``well-behaved", and show that this is sufficient to allow us to extend our control functions for the vortices and the embedded graph to a control function for the union.

\subsection{Basic definitions}

    Let $\ds{R}^+$ denote the set of all strictly positive real numbers, and let $\ds{N}:=\{1,2,\dots\}$.

    A \defn{graph} $G$ consists of a \defn{vertex-set} $V(G)$ and an \defn{edge-set} $E(G)\subseteq \binom{V(G)}{2}$. $G$ is \defn{finite} if $V(G)$ is finite, otherwise $G$ is \defn{infinite}. Unless otherwise stated, all graphs are finite, hence all further definitions are given assuming graphs are finite, although we remark that most of the definitions extend to infinite graphs.

    A \defn{weighted graph} is a graph $G$ equipped with a \defn{weighting} $w:E(G)\rightarrow \ds{R}^+ \cup \{\infty\}$. If a graph $G$ has not been equipped with a weighting, it is \defn{unweighted}. Sometimes, we consider unweighted graphs to be a special case of weighted graphs by equipping them with the ``default" weighting in which every edge gets weight $1$.

    Given an graph $G$ and a function $w:E(G)\rightarrow \ds{R}^+\cup \{\infty\}$, to \defn{imbue} $G$ with $w$ is to create a new weighted graph with the same vertex and edge sets as $G$ but with weighting $w$. By contrast, we can remove the weighting of a weighted graph $G$ to recover the underlying unweighted graph, which we denote \defn{$\UNW(G)$}.

    A graph $H$ is a \defn{subgraph} of a graph $G$, denoted $\mathdefn{H\subseteq G}$, if $V(H)\subseteq V(G)$ and $E(H)\subseteq E(G)$; if $G$ is a weighted graph, we further require that $H$ inherits the weighting of $G$. Two subgraphs $H_1,H_2$ of $G$ are \defn{adjacent} in $G$ if there exists $u\in V(H_1)$ and $v\in V(H_2)$ such that $uv\in E(G)$. The subgraph of $G$ \defn{induced} by $S\subseteq V(G)$, denoted \defn{$G[S]$}, is the subgraph $H$ of $G$ with $V(H):=S$ and $E(H):=\binom{S}{2}\cap E(G)$. A subgraph $H$ of $G$ is an \defn{induced subgraph} of $G$ if $H=G[V(H)]$. For $S\subseteq V(G)$, let $\mathdefn{G-S}:=G[V(G)\setminus S]$.

    Given an edge $uv$ of a graph $G$, for an integer $n\geq 0$, to \defn{subdivide} $uv$ $n$ times is to replace $uv$ with a path with $n$ internal vertices. To \defn{contract} $uv$ is to delete $u$ and $v$ and add a new vertex $x$ adjacent to every vertex that was adjacent to at least one of $u$ or $v$. A graph $H$ is a \defn{minor} of a $G$, denoted \defn{$H\leq G$}, if a graph isomorphic to $H$ can be obtained from a subgraph of $G$ by performing any number of contractions; otherwise, $G$ is \defn{$H$-minor-free}.

    A \defn{class} of graphs is a collection of graphs that is closed under isomorphism. A graph class $\scr{G}$ is \defn{hereditary} if every induced subgraph of a graph in $\scr{G}$ is also in $\scr{G}$. $\scr{G}$ \defn{admits} an $H$-minor if $H$ is a minor of some graph in $\scr{G}$; otherwise $\scr{G}$ is \defn{$H$-minor-free}. $\scr{G}$ is \defn{subdivision-closed} if for every $G\in\scr{G}$, every subdivision of $G$ is in $\scr{G}$. $\scr{G}$ is \defn{minor-closed} if for every $G\in\scr{G}$, every minor of $G$ is in $\scr{G}$. A minor-closed class $\scr{G}$ is \defn{proper} if it is not the class of all graphs; in this case, $\scr{G}$ must be $H$-minor-free for some $H$.

    A \defn{$(d,m)$-grid} is a graph that is isomorphic to the graph with vertex set $\{1,\dots,m\}^d$ and edges between two vertices only if they differ by exactly 1 in exactly one coordinate, and are the same in every other coordinate.

    \subsection{Weighted graphs as metric spaces}

    The \defn{length} of a path $P$ in a weighted graph $G$ is the sum of the weights of the edges between consecutive vertices of $P$. The \defn{distance} in $G$ between $u,v\in V(G)$, denoted \defn{$\dist_G(u,v)$}, is the length of a shortest path from $u$ to $v$, or $\infty$ if no path exists. It is important to observe that if we take a subgraph $H$ of $G$, $\dist_H$ is not necessarily the restriction of $\dist_G$ to $V(H)\times V(H)$. This can be problematic, and subgraphs for which the distances are preserved are desirable; this motivates the next definition. A weighted graph $H$ with $V(H)\subseteq V(G)$ is \defn{isometric} in $G$ if $\dist_H(u,v)=\dist_G(u,v)$ for all $u,v\in V(H)$.

    The \defn{diameter} of a weighted graph $H$, denoted \defn{$\diam(H)$}, is the maximum distance between two vertices of $H$. The \defn{radius} of $H$, denoted \defn{$\rad(H)$}, is the smallest value $r$ for which there exists $u\in V(G)$ such that every other vertex is at distance at most $r$ from $u$. $u$ is then called a \defn{central vertex} of $H$.

    The \defn{distance} in a weighted graph $G$ between $u\in V(G)$ and $S\subseteq V(G)$, denoted \defn{$\dist_G(u,S)$}, is the minimum distance between $u$ and some $s\in S$. For a real number $r\geq 0$, the \defn{$r$-neighbourhood} of $S\subseteq V(G)$ in $G$, denoted \defn{$N_G^r(S)$}, is the set of vertices $v\in V(G)$ such that $\dist_G(v,S)\leq r$; note that $N_G^0(S)=S$. In particular, if $S$ is a singleton $\{s\}$, we instead write \defn{$N_G^r(s)$}. The \defn{weak diameter} of $S$ in $G$, denoted \defn{$\wdiam_G(S)$}, is the maximum distance between vertices of $S$ in $G$. We remark that ``weak diameter" is often referred to as ``diameter" in the metric space literature; we use ``weak diameter" to avoid confusion with the graph-theoretic notion of ``diameter".
    
    For a real number $r>0$, the \defn{$r$-th power} of a weighted graph $G$, denoted \defn{$G^r$}, is the graph with vertex set $V(G)$ where two vertices $u,v$ are adjacent if $\dist_G(u,v)\leq r$. An \defn{$r$-walk} [resp.\ \defn{$r$-path}] $P$ from $u\in V(G)$ to $v\in V(G)$ in $G$ is a sequence of vertices of $G$ that form a walk [resp.\ path] from $u$ to $v$ in $G^r$. This is equivalent to requiring that consecutive vertices are at distance at most $r$. For technical reasons, we permit walks, and thus $r$-walks, to have duplicate consecutive vertices. For $Z\subseteq V(G)$, an \defn{$r$-subwalk} [resp.\ \defn{$r$-subpath}] of $P-Z$ is a subsequence of $P$ consisting of a string of consecutive vertices of $P$ which are all not in $Z$. In particular, an \defn{$r$-subwalk} [resp.\ \defn{$r$-subpath}] of $P$ is any string of consecutive vertices in $P$. The \defn{interior} of $P$, denoted \defn{$\Int{P}$}, is the (possibly empty) $r$-subwalk [resp.\ $r$-subpath] of $P$ consisting of the entire sequence except the start $u$ and the end $v$. The vertices of $\Int(P)$ are called the \defn{interior vertices} of $P$. Note that if $\Int(P)$ has weak diameter in $G$ at most $d$, then $P$ has weak diameter in $G$ at most $d+2r$.
    
    \subsection{Colourings and monochromatic $r$-components}
    
    Given a graph $G$, a set of vertices $S\subseteq V(G)$, and an arbitrary set $C$, a \defn{colouring} of $S$ with \defn{colours} $C$ is a map $c:V(G)\rightarrow C$. We also say that a \defn{colouring} of $G$ with colours $C$ is a colouring of $V(G)$ with colours $C$. Generally, one is only concerned with the number of colours used, $|C|$, hence an \defn{$m$-colouring} is colouring on a set of colours $C$ with $|C|=m$, and it is customary to take $C:=\{1,\dots,m\}$. Note that for $n\in \{1,\dots,m-1\}$, an $n$-colouring can be considered to be an $m$-colouring by adding $m-n$ unused colours.

    Let $G$ be a graph, and let $S,T\subseteq V(G)$ with $S\subseteq T$. We say that a colouring $c_T$ of $T$ \defn{extends} a colouring $c_S$ of $S$ if $c_T\big|_S=c_S$. Notice that if $S_1,\dots,S_n\subseteq V(G)$, and $c_i$ is a colouring of $S_i$ for each $i\in \{1,\dots,n\}$ such that $c_i=c_j$ on $S_i\cap S_j$ for any $i,j\in \{1,\dots,n\}$, then there exists a colouring $c_G$ of $G$ that extends $c_1,\dots,c_n$ simultaneously. Furthermore, if $\bigcup_{i=1}^n S_i=V(G)$, then this colouring $c_G$ is unique, and denoted by \defn{$\bigcup_{i=1}^n c_i$}.
    
    Under a colouring $c$ of a graph $G$, for any colour $i$, a set of vertices $S\subseteq V(G)$ is said to be \defn{$i$-monochromatic} under $c$ if every vertex of $S$ received the colour $i$ under $c$. $S$ is also said to be \defn{monochromatic} under $c$ if it is $i$-monochromatic under $c$ for some colour $i$.
    
    For a weighted graph $G$ and a real number $r>0$, we say that an \defn{$i$-monochromatic $r$-component} of $G$ under $c$ is a maximal nonempty connected subgraph $M$ of $G^r$ such that $V(M)$ is $i$-monochromatic under $c$. We then say that a subgraph $M$ of $G^r$ is a \defn{monochromatic $r$-component} of $G$ under $c$ if $M$ is an $i$-monochromatic $r$-component of $G$ for some colour $i$. Observe that for any monochromatic $r$-component $M$ of $G$ and any $u,v\in V(M)$, there exists a monochromatic $r$-path from $u$ to $v$ in $G$ of the same colour, and that for any monochromatic $r$-path $P$ in $G$, there exists a monochromatic $r$-component $M$ of $G$ of the same colour such that $P\subseteq V(M)$.

    Given a weighted graph $G$, an integer $m\geq 1$, and real numbers $r>0$, $d\geq 0$, we say that a $m$-colouring $c$ of $G$ is an \defn{$(m,r,d)$-colouring} if for each monochromatic $r$-component $M$ of $G$ under $c$, $V(M)$ has weak diameter in $G$ at most $d$; note that this is equivalent to requiring that every monochromatic $r$-path in $G$ under $c$ has weak diameter in $G$ at most $d$. This definition is very similar to the concept of weak diameter colourings of $G^r$ used by \citet*{Bonamy}, with the key difference that the weak diameter is measured in $G$, rather than $G^r$; this is done to avoid having to constantly convert between distances in $G$ and distances in $G^r$. Observe that for any weighted graph $H$ that is isometric in $G$ and any $(m,r,d)$-colouring $c$ of $G$, $c\big|_{V(H)}$ is an $(m,r,d)$-colouring of $H$.

    \subsection{Control functions and dimension}
    \label{secasdim}

    Many of the following definitions are given in terms of weighted graphs, however we remark that they can be defined for (and sometimes originate from the study of) arbitrary metric spaces, with the noteworthy difference of ``diameter" (in the metric geometry sense) being used in place of ``weak diameter" whenever applicable.
    
    For a weighted graph $G$ and real number $r>0$, $S,T\subseteq V(G)$ are said to be \defn{$r$-disjoint} if for all $u\in S$, $v\in T$, we have $\dist_G(u,v)>r$. A collection $\scr{C}$ of subsets of $V(G)$ is said to be \defn{$r$-disjoint} if the sets in $\scr{C}$ are pairwise $r$-disjoint.

    Given a weighted graph $G$ and an integer $n\geq 0$, a function $f:\ds{R}^+\rightarrow \ds{R}^+$ is said to be an \defn{$n$-dimensional control function} for $G$ if, for every real number $r>0$, there exist $n+1$ collections $\scr{C}_1,\dots,\scr{C}_{n+1}$ of subsets of $V(G)$ such that:

    \begin{enumerate}[label=(\alph*)]
        \item $\bigcup_{i=1}^{n+1}\bigcup_{S\in \scr{C}_i}S = V(G)$;
        \item $\scr{C}_i$ is $r$-disjoint for each $i\in \{1,\dots,n+1\}$; and
        \item $\wdiam_G(S)\leq f(r)$ for each $i\in \{1,\dots,n+1\}$ and each $S\in \scr{C}_i$.
    \end{enumerate}

    Note that if $f$ is an $n$-dimensional control function for $G$, it is also an $m$-dimensional control function for $G$ for each integer $m\geq n$.

    This definition is the standard definition used for general metric spaces, however it is rather cumbersome. Thankfully, given that we are only interested in weighted graphs, we obtain an alternative definition as a consequence of the following result, whose proof is in the appendix.
    
    \begin{restatable}{proposition}{dimCol}
        \label{dimToColouring}
        Let $n\geq 0$ be an integer, and let $G$ be a weighted graph. Then $f:\ds{R}^+\rightarrow \ds{R}^+$ is an $n$-dimensional control function for $G$ if and only if, for every real number $r>0$, $G$ admits an $(n+1,r,f(r))$-colouring.
    \end{restatable}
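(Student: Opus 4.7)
The plan is to prove both directions by essentially translating between a vertex-partition picture (the colouring) and a covering picture (the collections $\scr{C}_1,\dots,\scr{C}_{n+1}$), since an $(n{+}1,r,f(r))$-colouring and an $n$-dimensional cover with control $f(r)$ are encoding the same data in two different ways. Fix $r>0$ throughout.

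For the ``control function implies colouring'' direction, let $\scr{C}_1,\dots,\scr{C}_{n+1}$ be the collections guaranteed by the definition. For each vertex $v\in V(G)$, pick some $i(v)\in\{1,\dots,n+1\}$ such that $v$ lies in some $S\in \scr{C}_{i(v)}$, and let $c(v):=i(v)$. I claim the resulting $(n{+}1)$-colouring is an $(n{+}1,r,f(r))$-colouring. Take any monochromatic $r$-component $M$ of colour $i$; I would show by walking along a monochromatic $r$-path between any two vertices $u,v\in V(M)$ that all vertices of $V(M)$ lie in a single set $S\in \scr{C}_i$. Indeed, the hypothesis that $\scr{C}_i$ is $r$-disjoint (strict distance $>r$ between distinct sets) forces any two consecutive vertices on the $r$-path, which are at distance $\leq r$, to share the same representative set in $\scr{C}_i$. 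Then condition (c) of the control-function definition gives $\wdiam_G(V(M))\leq \wdiam_G(S)\leq f(r)$.

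For the converse, given an $(n{+}1,r,f(r))$-colouring $c$, for each $i\in\{1,\dots,n+1\}$ let $\scr{C}_i:=\{V(M):M\text{ is an }i\text{-monochromatic }r\text{-component of }G\text{ under }c\}$. Condition (a) is immediate since every vertex belongs to its own monochromatic $r$-component. Condition (c) is exactly the definition of an $(n{+}1,r,f(r))$-colouring. For condition (b), I would argue by contradiction: if $u\in V(M_1),v\in V(M_2)$ are in distinct $i$-monochromatic $r$-components with $\dist_G(u,v)\leq r$, then $u,v$ are adjacent in $G^r$ and both coloured $i$, so $M_1\cup M_2$ remains an $i$-monochromatic connected subgraph of $G^r$, contradicting maximality.

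Neither direction poses much difficulty; the only subtle point is matching the strictness conventions, namely that $r$-disjoint is defined with $\dist_G(u,v)>r$ while ``adjacent in $G^r$'' uses $\dist_G(u,v)\leq r$, so the boundary case $\dist_G(u,v)=r$ is treated consistently on both sides. Once this is observed, the two constructions above are mutual inverses at the level of the partition structure, and no additional estimate beyond the bound $f(r)$ is needed.
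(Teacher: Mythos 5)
Your proof is correct and follows essentially the same route as the paper's: in the forward direction you build the colouring by assigning each vertex the index of a covering collection it lies in and use $r$-disjointness to push a monochromatic $r$-path into a single set; in the reverse direction you take $\scr{C}_i$ to be the vertex sets of $i$-monochromatic $r$-components and verify $r$-disjointness via maximality. The observation about the complementary strictness conventions ($>r$ for $r$-disjointness versus $\leq r$ for adjacency in $G^r$) is a nice explicit note of something the paper uses only implicitly.
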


    We use this as the definition of a control function from now on. We remark that \cref{dimToColouring} is very similar to Proposition~1.17 of \citet*{Bonamy}, with the key difference that Proposition~1.17 of \citet*{Bonamy} is stated in terms of asymptotic dimension (see below) instead of control functions. Furthermore, following the proof, one does not actually get an equivalence between control functions and the bound on the weak diameter of vertex sets of monochromatic $r$-components, instead picking up an extra factor of $r$. 

    The \defn{asymptotic dimension} of a weighted graph $G$, denoted \defn{$\asdim(G)$}, is the smallest integer $n\geq 0$ such that $G$ admits an $n$-dimensional control function, or $\infty$ otherwise. While this is consistent with how asymptotic dimension is defined on arbitrary metric spaces, we remark that this definition is not very interesting, since every finite weighted graph $G$ has asymptotic dimension 0. Thus, we extend the definitions of control function and asymptotic dimension to classes of graphs.

    Given an integer $n\geq 0$ and a class of weighted graphs $\scr{G}$, we say that a function $f:\ds{R}^+\rightarrow \ds{R}^+$ is an \defn{$n$-dimensional control function for $\scr{G}$} if $f$ is an $n$-dimensional control function for each $G\in \scr{G}$. The \defn{asymptotic dimension} of $\scr{G}$, denoted \defn{$\asdim(\scr{G})$}, is then the smallest integer $n\geq 0$ such that $\scr{G}$ admits an $n$-dimensional control function, or $\infty$ otherwise.

    Finding the asymptotic dimension of an infinite weighted graph is non-trivial. Nevertheless, the following result shows a natural way to reduce this problem to a question about finite graphs.

    \begin{restatable}{proposition}{ANdimInf}
        \label{ANdimInfinite}
        Let $n\geq 0$ be an integer and let $G$ be an infinite weighted graph. Let $\scr{A}$ be the class of all finite induced subgraphs of $G$, and let $f$ be an $n$-dimensional control function for $\scr{A}$. Then for any real number $\eps>0$, $r\mapsto f((1+\eps)r)$ is an $n$-dimensional control function for $G$.
    \end{restatable}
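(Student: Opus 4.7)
Fix $r > 0$ and let $r' := (1+\eps)r$. By \cref{dimToColouring}, it suffices to exhibit an $(n+1, r, f(r'))$-colouring of $G$. My plan is a compactness argument: let $X := \{1, \ldots, n+1\}^{V(G)}$ with the product topology, which is compact by Tychonoff's theorem. For each finite induced subgraph $H$ of $G$, define
\[
C_H := \{c \in X : \text{every monochromatic } r'\text{-path in } H \text{ under } c \text{ has weak diameter in } G \text{ at most } f(r')\},
\]
and show this family has the finite intersection property, yielding some $c \in \bigcap_H C_H$.

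Verifying the basic properties of $C_H$ is routine. Since $H$ is finite, there are only finitely many potential $r'$-paths in $H$, and each imposes merely a clopen monochromaticity constraint on $c$, so $C_H$ is closed. For nonemptiness, $f$ is an $n$-dimensional control function for $H \in \scr{A}$, so by \cref{dimToColouring}, $H$ admits an $(n+1, r', f(r'))$-colouring $c_H$; its monochromatic $r'$-paths $P$ satisfy $\wdiam_G(V(P)) \leq \wdiam_H(V(P)) \leq f(r')$, and extending $c_H$ arbitrarily to $V(G)$ places it in $C_H$. For the finite intersection property, given $H_1, \ldots, H_k$ I would take $H^\star := G[V(H_1) \cup \cdots \cup V(H_k)]$; since $H_i \subseteq H^\star$, distances in $H^\star$ are bounded above by distances in $H_i$, so every monochromatic $r'$-path in $H_i$ is also a monochromatic $r'$-path in $H^\star$, giving $C_{H^\star} \subseteq \bigcap_{i=1}^{k} C_{H_i}$.

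The heart of the argument is showing a chosen $c \in \bigcap_H C_H$ is an $(n+1, r, f(r'))$-colouring of $G$. Given any monochromatic $r$-path $P = u_0 \ldots u_m$ in $G$, I need $\wdiam_G(V(P)) \leq f(r')$. The key subtlety is that shortest paths need not exist in general weighted graphs---$\dist_G(u_j, u_{j+1}) \leq r$ only tells us this value is an infimum of path lengths. So for each $j$ I would pick a $u_j$-$u_{j+1}$-path $S_j$ in $G$ of length at most $(1+\eps)\dist_G(u_j, u_{j+1}) \leq r'$, and set $H := G[V(P) \cup \bigcup_j V(S_j)]$, a finite induced subgraph. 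Each $S_j \subseteq H$ witnesses $\dist_H(u_j, u_{j+1}) \leq r'$, so $P$ is a monochromatic $r'$-path in $H$; as $c \in C_H$, this yields $\wdiam_G(V(P)) \leq f(r')$, as required.

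The main obstacle is precisely this non-existence of shortest paths in general weighted graphs; it is what forces the $(1+\eps)$ slack into the statement, since without it one could not guarantee that a given $r$-path in $G$ is realised as an $r$-path in any finite induced subgraph.
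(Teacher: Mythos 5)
Your proof is correct and follows essentially the same approach as the paper: the paper invokes a selection lemma of Gottschalk (which is itself a compactness statement) where you unpack the compactness directly via Tychonoff and the finite intersection property, and the crucial idea in both — choosing, for each pair of consecutive vertices of the $r$-path, an approximate shortest path of length at most $(1+\eps)r$ so that the whole configuration sits inside a finite induced subgraph — is identical.
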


    The proof of \cref{ANdimInfinite} is also in the appendix.

    It is because of \cref{ANdimInfinite} that we need only consider finite graphs in this paper. We also remark that \cref{ANdimInfinite} is nearly identical to Proposition~A$.2$ of \citet*{Bonamy}, albeit with a slight tweaking to the control function for $G$; this will be important once we introduce Assouad--Nagata dimension.

    A function $f:\ds{R}^+\rightarrow \ds{R}^+$ is a \defn{dilation} if there exists a real number $c>0$ such that $f(r)\leq cr$ for every real number $r>0$. The \defn{Assouad--Nagata dimension} of a graph $G$ is the smallest integer $n\geq 0$ such that $G$ admits an $n$-dimensional control function $f$ that is also a dilation, or $\infty$ otherwise. Likewise, the \defn{Assouad--Nagata dimension} of a class of graphs $\scr{G}$ is the smallest integer $n\geq 0$ such that $\scr{G}$ admits an $n$-dimensional control function $f$ that is also a dilation, or $\infty$ otherwise. Observe that the asymptotic dimension of a class is at most the Assouad--Nagata dimension.

    Notice that, in \cref{ANdimInfinite}, if $f$ is a dilation, then so is $r\mapsto f((1+\eps)r)$. Thus, we can also use \cref{ANdimInfinite} to bound the Assouad--Nagata dimension of an infinite graph in terms of the Assouad--Nagata dimension of the class of its finite induced subgraphs. Thus, it still makes sense to only consider finite graphs even when discussing Assouad--Nagata dimension rather than asymptotic dimension.

    Finally, given a function $f:\ds{R}^+\rightarrow \ds{R}^+$, a weighted graph $G$, and an integer $n\geq 0$, sometimes we can only find $(n+1,r,f(r))$-colourings of $G$ when $r$ is sufficiently large. As such, for a real number $\ell\geq 0$, we say that $f$ is an \defn{$\ell$-almost $n$-dimensional control function} for a $G$ if $G$ admits an $(n+1,r,f(r))$-colouring for every real number $r>0$ with $r\geq \ell$. Note that a 0-almost $n$-dimensional control function is just a $n$-dimensional control function.

    \section{Path Rerouting}
	
    One of the key ideas of this paper is the ability to ``adjust" or ``reroute" particular $r$-paths to obtain an $r'$-path with the same endpoints but whose interior is contained within a ``better" part of the graph $G$. This is formalised in \cref{rerouting}. Note that this concept of rerouting is not completely new, and exists in some capacity within Lemma~3.2 of \citet*{Bonamy} without being formally identified.
	
    \begin{proposition}
	    \label{rerouting}
	    Let $r>0$, $\ell,d\geq 0$ be real numbers, let $G$ be a weighted graph, let $x,y\in V(G)$, and let $P$ be an $r$-path in $G$ from $x$ to $y$. Assume there exist sets $Z,S\subseteq V(G)$ and a map $\iota:Z\rightarrow S$ such that:
        \begin{enumerate}[label=(\alph*)]
            \item $\dist_G(v,\iota(v))\leq \ell$ for each $v\in Z$; and
            \item every $r$-subpath of $P-Z$ has weak diameter in $G$ at most $d$.
        \end{enumerate}
        
        Let $v_1,\dots,v_n$ be the (possibly empty) subsequence of $P$ that is in $Z$, including $x$,$y$ if applicable. Define $P'$ to be the sequence $x,\iota(v_1),\iota(v_2),\dots,\iota(v_n),y$. Then $P'$ is a $(d+2r+2\ell)$-walk from $x$ to $y$ in $G$ whose interior is contained in $S$.
    \end{proposition}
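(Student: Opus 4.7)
The plan is to verify, edge by edge, that $P'$ is a $(d+2r+2\ell)$-walk, with the containment of the interior in $S$ being immediate from the codomain of $\iota$. Write $P'$ as $u_0, u_1, \ldots, u_n, u_{n+1}$ where $u_0 = x$, $u_{n+1} = y$, and $u_i = \iota(v_i)$ for $1 \leq i \leq n$. The interior of $P'$ is then $\{\iota(v_1), \ldots, \iota(v_n)\} \subseteq S$, so only the bound on consecutive distances needs proof.

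First I would handle the generic case: for each $i \in \{1, \ldots, n-1\}$, consider the subpath $Q_i$ of $P$ from $v_i$ to $v_{i+1}$. By the definition of the sequence $v_1, \ldots, v_n$, the vertices strictly between $v_i$ and $v_{i+1}$ on $P$ lie outside $Z$, so $\Int(Q_i)$ is an $r$-subpath of $P - Z$ and has weak diameter at most $d$ by (b). The remark immediately preceding the proposition then gives $\wdiam_G(V(Q_i)) \leq d + 2r$, so $\dist_G(v_i, v_{i+1}) \leq d + 2r$. Combining with (a) via the triangle inequality,
\[
\dist_G(\iota(v_i), \iota(v_{i+1})) \leq \ell + (d + 2r) + \ell = d + 2r + 2\ell,
\]
which is exactly what is needed.

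Next I would handle the two boundary transitions $x \to \iota(v_1)$ and $\iota(v_n) \to y$ in the case $n \geq 1$. If $x \in Z$, then $x = v_1$, so $\dist_G(x, \iota(v_1)) \leq \ell$ by (a). If $x \notin Z$, then the subpath of $P$ from $x$ to $v_1$ has interior disjoint from $Z$ (since $v_1$ is the first element of $Z$ along $P$), so the same argument as above yields $\dist_G(x, v_1) \leq d + 2r$, and then one more application of the triangle inequality gives $\dist_G(x, \iota(v_1)) \leq d + 2r + \ell$. The transition $\iota(v_n) \to y$ is symmetric.

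Finally, the degenerate case $n = 0$: here $x, y \notin Z$ and no vertex of $P$ lies in $Z$, so $P$ itself is an $r$-subpath of $P - Z$, whence $\dist_G(x, y) \leq \wdiam_G(V(P)) \leq d$ by (b). All bounds are therefore at most $d + 2r + 2\ell$, establishing that $P'$ is a $(d+2r+2\ell)$-walk from $x$ to $y$. There is no real obstacle here; the only mild subtlety is book-keeping the boundary cases (whether $x$ or $y$ lies in $Z$), which is why the statement explicitly includes them in the subsequence $v_1, \ldots, v_n$ when applicable.
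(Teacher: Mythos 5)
Your proof is correct and follows essentially the same approach as the paper: bound $\dist_G(v_i, v_{i+1})$ by $d+2r$ using condition~(b), then apply the triangle inequality with condition~(a), handling the endpoints and the $n=0$ case separately. The only cosmetic difference is that you invoke the earlier observation about interiors of $r$-paths, whereas the paper spells out the two subcases (whether $v_i, v_{i+1}$ are consecutive in $P$ or not) directly; these amount to the same calculation.
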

    
    \begin{proof}
        It suffices to show that consecutive vertices of $P'$ are at distance at most $d+2r+2\ell$. If $P$ does not intersect $Z$, $P$ is an $r$-subpath of $P-Z$, and hence $\dist_G(x,y)\leq d\leq d+2r+2\ell$ as desired. So we may assume that $P$ intersects $Z$, and that $v_1,\dots,v_n$ are defined with $n\geq 1$.
    
        Consider consecutive interior vertices $\iota(v_i),\iota(v_{i+1})$ of $P'$; we must show that $\dist_G(\iota(v_i),\iota(v_{i+1}))\leq d+2r+2\ell$; it suffices to show $\dist_G(v_i,v_{i+1})\leq d+2r$. If $v_i,v_{i+1}$ are consecutive in $P$, then $\dist_G(v_i,v_{i+1})\leq r$, as desired. Otherwise, let $u_i$ be the vertex directly after $v_i$ in $P$, and let $w_{i+1}$ be the vertex directly before $v_{i+1}$ in $P$; it suffices to show that $\dist_G(u_i,w_{i+1})\leq d$. Consider the $r$-subpath $P_i$ of $P$ going from $u_i$ to $w_{i+1}$. $P_i\cap Z=\emptyset$ by definition of the subsequence $v_1,\dots,v_n$, thus $P_i$ is an $r$-subpath of $P-Z$ and has weak diameter in $G$ at most $d$. Therefore, $\dist_G(u_i,w_{i+1})\leq d$, as desired.
     
        It remains to show that $\dist_G(x,\iota(v_1))$ and $\dist_G(y,\iota(v_n))$ are at most $d+2r+2\ell$. We show that $\dist_G(x,\iota(v_1))\leq d+2r+2\ell$; the argument for $\dist_G(y,\iota(v_n))$ is symmetric. If $x\in Z$, then $v_1=x$ and thus $\dist_G(x,\iota(v_1))=\dist_G(x,\iota(x))\leq \ell\leq d+2r+2\ell$. Otherwise, let $w_1$ be the vertex directly before $v_1$ in $P$; it suffices to show that $\dist_G(x,w_1)\leq d+r+\ell$. Let $P'$ be the $r$-subpath of $P$ going from $x$ to $w_1$; we must have $P'\cap Z=\emptyset$ by definition of $v_1$, hence $P'$ is an $r$-subpath of $P-Z$. Thus, $\wdiam_G(P')\leq d$; in particular $\dist_G(x,w_1)\leq d\leq d+2r+2\ell$, as desired.
    \end{proof}
	
    An important application for \cref{rerouting} is when the set $S$ is small. To help express this, we borrow an idea from \citet*{Bonamy}. Given a weighted graph $G$, $Z\subseteq V(G)$ is said to be a $(k,\ell)$-centred set in $G$ if there exists a set $S\subseteq V(G)$ with $|S|\leq k$ such that $Z\subseteq N_G^{\ell}(S)$. We call $S$ a \defn{centre} of $Z$. Note that for any weighted graph $G$ of radius at most $\ell$, $V(G)$ is a $(1,\ell)$-centred set in $G$, and any central vertex of $G$ is a centre of $V(G)$.

    \begin{corollary}
	\label{rpathInsideCentred}
        Let $k\geq 1$ be an integer, let $r>0$, $\ell\geq 0$ be real numbers, let $G$ be a weighted graph, let $Z$ be a $(k,\ell)$-centred set in $G$, and let $x,y\in Z$ be such that there exists an $r$-path $P$ from $x$ to $y$ in $G$ that is contained in $Z$. Then $\dist_G(x,y)\leq (k+1)(2r+2\ell)$.
    \end{corollary}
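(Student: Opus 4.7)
The plan is to reduce the statement to a direct application of \cref{rerouting} with $d=0$, and then to shortcut the resulting walk using that its interior is supported on few vertices. First, I would invoke the $(k,\ell)$-centred hypothesis to choose a centre $S\subseteq V(G)$ with $|S|\leq k$ and $Z\subseteq N_G^\ell(S)$, and then define $\iota\colon Z\to S$ by picking, for each $v\in Z$, some $\iota(v)\in S$ satisfying $\dist_G(v,\iota(v))\leq \ell$. This immediately gives hypothesis (a) of \cref{rerouting}. Since the $r$-path $P$ is entirely contained in $Z$, the ``path'' $P-Z$ has no vertices at all, so hypothesis (b) of \cref{rerouting} holds vacuously with $d=0$.

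Feeding this into \cref{rerouting} produces a $(2r+2\ell)$-walk $P'$ from $x$ to $y$ in $G$ whose interior lies in $S$. The remaining task is to turn the fact that ``each consecutive pair is within $2r+2\ell$'' into the claimed bound on $\dist_G(x,y)$. The subtle point, which I expect to be the only real obstacle, is that $P'$ itself may be arbitrarily long: its length reflects the length of the original path $P$, so naively summing edge costs is useless. The key observation I would use is that $P'$ uses only vertices from $\{x,y\}\cup S$, a set of size at most $k+2$, so $P'$ is a walk between $x$ and $y$ in the induced subgraph of $G^{2r+2\ell}$ on this vertex set. Any such walk can be shortcut to a simple path on the same vertex set, which has at most $k+1$ edges. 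Since each of these edges corresponds to distance at most $2r+2\ell$ in $G$, the triangle inequality yields $\dist_G(x,y)\leq (k+1)(2r+2\ell)$, as required.
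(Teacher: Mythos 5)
Your proposal is correct and follows essentially the same route as the paper's own proof: both apply \cref{rerouting} with $d=0$ (using that $P\subseteq Z$ makes $P-Z$ empty), obtain a $(2r+2\ell)$-walk with interior in $S$, and then bound $\dist_G(x,y)$ using that the walk visits at most $k+2$ distinct vertices. Your extra remark about shortcutting the walk to a simple path in $G^{2r+2\ell}$ just makes explicit the step the paper leaves implicit.
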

	
    \begin{proof}
	    Let $S$ denote a centre of $Z$, and define $\iota:Z\rightarrow S$ such that $\iota(v)\in N_G^{\ell}(v)\cap S$ for each $v\in Z$. Since $P\subseteq Z$, it is immediate that every $r$-subpath of $P-Z$ has weak diameter in $G$ at most $0$. We also know that for any $v\in Z$, $\dist_G(v,\iota(v))\leq \ell$. Thus, we can apply \cref{rerouting} to obtain a $(2r+2\ell)$-walk from $x$ to $y$ whose interior is in $S$. Since $|S|\leq k$, $P'$ can involve at most $k+2$ distinct vertices, and thus $\dist_G(x,y)\leq (k+1)(2r+2\ell)$, as desired.
    \end{proof}
    
    The next result is very similar to Lemma~2.1 of \citet*{Bonamy}, albeit the original's bound as stated is not tight enough for our purposes. We instead use \cref{rerouting} for a different, quicker, proof that avoids this problem.
	
    \begin{corollary}
	    \label{colouringMinusCentredSet}
	    Let $G$ be a weighted graph, let $r>0$, $d,\ell\geq 0$ be real numbers, let $m\geq 1$ be an integer, let $C$ be a set of colours of size $m$, and let $Z\subseteq V(G)$ be a $(k,\ell)$-centred set in $G$ such that $G-Z$ admits an $(m,r,d)$-colouring $c$ with colours $C$. Then for any colouring $c'$ of $Z$ with colours $C$, the colouring $c\cup c'$ of $G$ is a $(m,r,(k+1)(d+4r+2\ell))$-colouring with colours $C$.
    \end{corollary}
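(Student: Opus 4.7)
The plan is to show that any monochromatic $r$-path $P$ in $G$ under $c\cup c'$ from $x$ to $y$ satisfies $\dist_G(x,y)\leq (k+1)(d+4r+2\ell)$; since every two vertices of a monochromatic $r$-component of $G$ under $c\cup c'$ are connected by such a path, this bounds the weak diameter of every such component, as required.

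The natural attempt is to apply \cref{rerouting} to $P$ with the given $Z$ and a centre $S$ of $Z$. Condition (a) of \cref{rerouting} is satisfied by any $\iota:Z\to S$ with $\iota(v)\in N_G^\ell(v)\cap S$. Condition (b) is more delicate: an $r$-subpath $Q$ of $P-Z$ has consecutive vertices at $G$-distance at most $r$, but possibly at $(G-Z)$-distance greater than $r$, so $Q$ need not be an $r$-path in $G-Z$, and the hypothesis on $c$ does not directly control its weak diameter.

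To circumvent this, I instead apply \cref{rerouting} with the enlarged set $Z':=N_G^r(Z)$. By the triangle inequality $Z'\subseteq N_G^{r+\ell}(S)$, so $Z'$ is $(k,r+\ell)$-centred with the same centre $S$. The key observation is that if $Q'$ is an $r$-subpath of $P-Z'$ and $q_i,q_{i+1}$ are consecutive vertices of $Q'$, then any $G$-path of length at most $r$ between them must avoid $Z$: otherwise some vertex of $Z$ would lie within $G$-distance $r$ of $q_i$, placing $q_i$ in $N_G^r(Z)=Z'$ and contradicting $q_i\notin Z'$. Hence $\dist_{G-Z}(q_i,q_{i+1})\leq r$, so $Q'$ is a monochromatic $r$-path in $G-Z$ under $c$; thus $V(Q')$ lies in a single monochromatic $r$-component of $G-Z$, giving $\wdiam_G(V(Q'))\leq\wdiam_{G-Z}(V(Q'))\leq d$.

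With condition (b) so established (with parameter $d$) and condition (a) holding for any map $\iota':Z'\to S$ with $\iota'(v)\in N_G^{r+\ell}(v)\cap S$ (so $\ell':=r+\ell$), \cref{rerouting} produces a $(d+2r+2\ell')=(d+4r+2\ell)$-walk $P'$ from $x$ to $y$ in $G$ whose interior vertices all lie in $S$. Since $|S|\leq k$, the walk $P'$ involves at most $k+2$ distinct vertices, yielding $\dist_G(x,y)\leq (k+1)(d+4r+2\ell)$ by the same counting used in the proof of \cref{rpathInsideCentred}. The main obstacle is the discrepancy between $G$-distance and $(G-Z)$-distance along $P$; enlarging $Z$ to $N_G^r(Z)$ resolves this at a cost of only $2r$ in the final bound.
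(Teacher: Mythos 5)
Your proof is correct and follows essentially the same route as the paper: both enlarge $Z$ to a set $Z'$ that is still $(k,r+\ell)$-centred, observe that shortest $G$-paths between consecutive vertices of an $r$-subpath of $P-Z'$ avoid $Z$ so that such subpaths are genuine $r$-paths in $G-Z$, and then apply \cref{rerouting} followed by the same $|S|\leq k$ counting argument. The only (inessential) difference is the choice of enlargement: you take $Z' := N_G^r(Z)$, while the paper takes $Z' := N_G^{r+\ell}(S)$; both sets sit between $N_G^r(Z)$ and $N_G^{r+\ell}(S)$ and yield the same bound.
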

	
    \begin{proof}
        Let $S$ denote a centre of $Z$, and let $Z':=N_G^{r+\ell}(S)$. For any $u,v\in V(G)\setminus Z'$ at distance at most $r$ in $G$ and any shortest path $Q$ from $u$ to $v$ in $G$, observe that $Q$ is disjoint from $Z$, as otherwise we would have $u,v\in N_G^r(Z)\subseteq Z'$. Thus, $Q$ is also a path of length at most $r$ in $G-Z$, and $\dist_{G-Z}(u,v)\leq r$. Therefore, for any monochromatic $r$-path $P$ in $G$ under $c\cup c'$, any $r$-subpath $P'$ of $P-Z'$ is a monochromatic $r$-path in $G-Z$ under $c$. Thus, $P'$ has weak diameter in $G-Z$ (and $G$) at most $d$. Define a map $\iota:Z'\rightarrow S$ by selecting $\iota(z)\in N_G^{r+\ell}(z)\cap S$ for each $z\in Z'$; by \cref{rerouting}, we obtain a $(d+2r+2(r+\ell))=(d+4r+2\ell)$-walk which has the same endpoints as $P$ but whose interior is contained in $S$. Since $|S|\leq k$, this $(d+4r+2\ell)$-walk involves at most $k+2$ distinct vertices, and thus has weak diameter at most $(k+1)(d+4r+2\ell)$ in $G$, as desired.
    \end{proof}

    The main purpose of \cref{rerouting,rpathInsideCentred,colouringMinusCentredSet} is to give sufficient conditions under which we can show that a given colouring is a $(m,r,D)$-colouring. With this in mind, we introduce one more tool to do this in a non-trivial setting, however we need to introduce a bit more terminology first.

    A \defn{separation} of a graph $G$ is a pair of subgraphs \defn{$(A,B)$} of $G$ such that $G=A\cup B$. The set $S:=V(A\cap B)$ is called the \defn{separator} of the separation. 
    
    Next, given a real number $r>0$, a set of colours $C$ of size at least 2, a weighted graph $G$, $S\subseteq V(G)$, and $Z\supseteq N_G^{3r}(S)$, we say that a colouring $c$ of $Z$ with colours $C$ has an \defn{$(S,r)$-barrier} in $G$ if there exist distinct colours $\alpha,\beta \in C$ such that:
    \begin{enumerate}[label=(\alph*)]
        \item for each $v\in N_G^{r}(S)\setminus S$, there exists $s\in N_G^r(v)\cap S$ with $c(s)=c(v)$
        \item $N_G^{2r}(S)\setminus N_G^{r}(S)$ and $N_G^{3r}(S)\setminus N_G^{2r}(S)$ are $\alpha$-monochromatic and $\beta$-monochromatic respectively under $c$. 
    \end{enumerate}

    Note that this definition is not completely new; something similar was used in the proof of Lemma~3.2 in \citet*{Bonamy} without being formally identified. Note that for any colouring $c_S$ of $S$ with colours $C$, there exists a colouring of $Z$ with colours $C$ that extends $c_S$ and admits an $(S,r)$-barrier. Further, note that the presence of an $(S,r)$-barrier in $G$ is preserved upon extending a colouring.
    
    The choice of name ``barrier" comes from the following observation. For any $r$-path $P$ from $u\in N_G^{2r}(S)$ to $v\in V(G)\setminus N_G^{2r}(S)$, $P$ must contain a vertex in both $N_G^{2r}(S)\setminus N_G^{r}(S)$ and $N_G^{3r}(S)\setminus N_G^{2r}(S)$. Since these two sets are monochromatic of different colours, this means that $P$ cannot be monochromatic. Hence, this colouring contains a ``barrier" near $S$ that blocks monochromatic $r$-paths. The requirement on how $N_G^{r}(S)-S$ is coloured is to prevent a ``sudden" change in colouring, in the sense that vertices close to $S$ still use similar colours to $S$, and the monochromatic sets only appear further away from $S$. This is not important when $G$ is the full graph, however we use it when $G$ is one half of a separation of a larger graph, and $S$ is the separator. Property~(a) then prevents a sudden transition of colours upon crossing the separator, and property~(b) prevents monochromatic $r$-components from leaking far across the separator.

    We are now in position to state the main tool of this section, \cref{boundComponentsDiameter}. While the statement of \cref{boundComponentsDiameter} is new, the proof uses similar techniques to the later parts of the proof of Lemma~3.2 in \citet*{Bonamy}.

    \begin{lemma}
        \label{boundComponentsDiameter}
        Let $r>0$, $\ell\geq 0$ be real numbers, let $k\geq 1$ be an integer, and set $\ell':=(k+1)(6r+2\ell)$ and $r':=2r+2\ell'$. Let $m\geq 2$ be an integer, let $C$ be a set of colours of size $m$, and let $d,D>0$ be real numbers such that $D\geq d+2r'$. Let $G$ be a weighted graph, and for some integer $a\geq 0$, let $G_0,G_1,\dots,G_a$ be isometric subgraphs of $G$ such that:
        \begin{enumerate}[label=(\alph*)]
            \item $G=\bigcup_{i=0}^a G_i$; 
            \item for each $i\in \{1,\dots,a\}$, if $\widetilde{G}_i:=\bigcup_{j=0,j\neq i}^a G_j$ and $S_i:=V(G_i\cap \widetilde{G}_i)$, then:
            \begin{enumerate}[label=(\roman*)]
                \item $S_i\subseteq V(G_0)$; and
                \item $S_i$ is a $(k,\ell)$-centred set in $G$;
            \end{enumerate}
            \item $G_0$ admits a $(m,r',d)$-colouring $c_0$ with colours $C$;
            \item for each $i\in \{1,\dots,a\}$, $G_i$ admits a colouring $c_i$ with colours $C$ such that:
            \begin{enumerate}[label=(\roman*)]
                \item $c_i$ is an $(m,r,D)$-colouring of $G_i$;
                \item $c_i=c_0$ on $S_i$; and
                \item $c_i$ has an $(S_i,r)$-barrier.
            \end{enumerate}
        \end{enumerate}

        Then $c:=\bigcup_{i=0}^a c_i$ is a well-defined $(m,r,D)$-colouring of $G$ with colours $C$.
    \end{lemma}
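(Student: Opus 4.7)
The strategy splits into two parts. Well-definedness of $c=\bigcup_{i=0}^a c_i$ is a direct check: by (b.i) together with the definition of $S_i$, we get $V(G_i)\cap V(G_0)=S_i$ for each $i\geq 1$ and $V(G_i)\cap V(G_j)\subseteq V(G_0)$ for distinct $i,j\geq 1$; on each such overlap condition~(c.ii) forces $c_0$, $c_i$, $c_j$ to agree, so the union is unambiguous. For the diameter bound, fix a monochromatic $r$-path $P$ in $G$ of colour $\gamma$. If $V(P)\subseteq V(G_i)$ for some single $i$, then tightness of $G_i$ in $G$ makes $P$ a monochromatic $r$-path in $G_i$ under $c_i$ (or in $G_0$ under $c_0$, using $r\leq r'$), and the corresponding colouring condition immediately yields $\wdiam_G(V(P))\leq D$; so assume hereafter that $P$ is not contained in any single $V(G_i)$.

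The crucial use of the barriers is then to show that no vertex of $P$ lies in the deep region $V(G_i)\setminus N_{G_i}^{3r}(S_i)$ for any $i\geq 1$. Given a hypothetical deep $v\in V(P)$, I trace $P$ outward from $v$: leaving $V(G_i)$ requires a $G$-path through $S_i$, which is more than $r$ away from $v$, so the neighbouring $P$-vertex also lies in $V(G_i)$ and at $G$-distance $>2r$ from $S_i$. Iterating this observation together with the $\alpha_i$- and $\beta_i$-monochromaticity of the two buffer zones (and case analysis on whether $\gamma\in\{\alpha_i,\beta_i\}$) traps all of $V(P)$ inside $V(G_i)\setminus N_{G_i}^{2r}(S_i)\subseteq V(G_i)$, contradicting the escape assumption.

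With no deep vertices, I decompose the vertex sequence of $P$ into maximal runs inside single branches $V(G_{i_\lambda})\setminus S_{i_\lambda}$, separated by either vertices of $V(P)\cap V(G_0)$ or direct branch-to-branch hops. A short analysis shows that every boundary vertex of such a run lies in $N_G^r(S_{i_\lambda})\setminus S_{i_\lambda}$, so barrier condition~(a) provides a same-colour anchor in $S_{i_\lambda}\subseteq V(G_0)$ at $G$-distance $\leq r$. Let $A\subseteq V(G_0)$ be the union of $V(P)\cap V(G_0)$ with all these anchors, ordered as they appear along $P$; every vertex of $A$ has colour $\gamma$ under $c_0$. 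Each segment together with its two anchors lies in the $(k,3r+\ell)$-centred set $N_G^{3r+\ell}(T_{i_\lambda})$, where $T_{i_\lambda}$ is a centre of $S_{i_\lambda}$, so \cref{rpathInsideCentred} bounds the $G$-distance between the two anchors of a segment by $(k+1)(8r+2\ell)$; across a segment boundary consecutive anchors are at $G$-distance $\leq 3r$. By the choice $\ell'=(k+1)(6r+2\ell)$ and $r'=2r+2\ell'$, both bounds are at most $r'$, so $A$ is a monochromatic $r'$-path in $G_0$, and condition~(c) forces $\wdiam_G(A)\leq d$. A second application of \cref{rpathInsideCentred} to any $v\in V(P)\setminus A$ (which sits in some segment) shows $\dist_G(v,A)\leq (k+1)(8r+2\ell)\leq r'$, so $\wdiam_G(V(P))\leq d+2r'\leq D$.

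The main obstacle I anticipate is the edge case of a segment containing an endpoint $x$ or $y$ of $P$: such an endpoint need not itself be at an interface, so one of the segment's two anchors may be missing, and I must verify that the remaining anchor (which must exist, else the segment would cover all of $P$ inside one $V(G_i)$, putting us back in the first case) still places the endpoint within $G$-distance $\leq r'$ of $A$ via \cref{rpathInsideCentred}. Beyond this case-split, what remains is arithmetic, confirming that each intermediate inequality collapses into the stated $r'$ and $D$.
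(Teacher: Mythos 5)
Your proof is correct and shares the paper's essential strategy (trap the offending component near $G_0$, project it into $V(G_0)$ via the barriers, and invoke the $(m,r',d)$-colouring of $G_0$ together with \cref{rpathInsideCentred}), but it packages the projection step differently. The paper first proves a somewhat stronger containment claim --- that the component lies in $V(G_0)\cup\bigcup_i N_{G_i}^{2r}(S_i)$, excluding $S_i^3$ entirely --- then shows \emph{every} vertex of the component has a same-coloured image in $V(G_0)$ within distance $\ell'$, and finally applies \cref{rerouting} to produce the $r'$-walk in $G_0$ in one stroke. You instead prove only the weaker "no deep vertices" claim (allowing vertices in $S_i^3$), decompose $P$ into branch segments by hand, anchor only the segment boundary vertices via barrier condition~(a), and then apply \cref{rpathInsideCentred} twice (once to bound the spread of each segment, once to attach stragglers to the anchor sequence $A$). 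This costs you a slightly larger constant, $(k+1)(8r+2\ell)$ in place of $\ell'=(k+1)(6r+2\ell)$, but as you note this is still below $r'$, so the arithmetic closes and the conclusion $\wdiam_G(V(P))\leq d+2r'\leq D$ is recovered. The paper's route is a bit more modular because it isolates the gluing step into \cref{rerouting} and obtains a uniform projection $\iota$ on all of $V(M)$; yours is more explicit and avoids the stronger containment claim, at the cost of carrying the segment bookkeeping and the endpoint edge case you flag. Both are sound; neither generalizes further than the other in any material way.

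One small bookkeeping remark: in your well-definedness paragraph you cite "(c.ii)" where you mean the hypothesis $c_i=c_0$ on $S_i$, which is~(d)(ii) in the statement. Also be careful to state the "no deep vertices" argument with the intermediate zones measured in $G_i$ (i.e.\ $N_{G_i}^{jr}(S_i)$), and then use tightness of $G_i$ in $G$ to pass back to $G$-distances; you implicitly do this, but it is worth making explicit since the barrier is a $(S_i,r)$-barrier \emph{in $G_i$}.
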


    \begin{proof}
        If $a=0$ the claim is trivially true, as $G=G_0$ by (a) and $c=c_0$ is an $(m,r,D)$-colouring of $G_0=G$ by (c), since $r'\geq r$ and $d\leq D$. So assume $a\geq 1$.

        Observe that $c$ is a well-defined colouring of $G$ by (a), (b)(i) and (d)(ii). So it suffices to show that for every real number $r>0$ and every monochromatic $r$-component $M$ of $G$ under $c$, $\wdiam_G(V(M))\leq D$. 
        
        By (a), for each $i\in \{1,\dots,a\}$, $(G_i,\widetilde{G}_i)$ is a separation of $G$ with separator $S_i$, which we use implicitly throughout this proof. Also, note that $\bigcap_{i=1}^a V(\widetilde{G}_i)=V(G_0)$ by (b)(i).
    
        For all $i\in \{1,\dots,a\}$ and $j\in \{1,2,3\}$, let $N_i^j:=N_{G_i}^{jr}(S_i)$ and $S_i^j:=N_{G_i}^{jr}(S_i)\setminus N_{G_i}^{(j-1)r}$. Note that for each $i\in \{1,\dots,a\}$, since $c_i$ has an $(S_i,r)$-barrier, $S_i^2$, $S_i^3$ are both monochromatic of different colours, and for each $v\in S_i^1$ there is an $s\in S_i\cap N_{G_i}^r(v)\subseteq S_i\cap N_G^r(v)$ with the same colour as $v$. Let $V':=V(G_0)\cup \bigcup_{i=1}^a N_i^2$.
    	    
        \textbf{Claim:} For each monochromatic $r$-component $M$ of $G$ under $c$, $V(M)$ is contained either in $V(G_i)$ for some $i\in \{1,\dots,a\}$, or in $V'$.

        \begin{proofofclaim}
            For any $i\in \{1,\dots,a\}$, any $r$-path $P$ from $u\in V(\widetilde{G}_i)\cup N_i^2$ to $v\in V(G_i)\setminus N_i^2$ must contain a vertex in $S_i^2$ and $S_i^3$; see \cref{FigRSep}. As $S_i^2$ and $S_i^3$ are monochromatic of different colours, $P$ cannot be monochromatic. It follows that for any monochromatic $r$-component $M$ of $G$, $V(M)$ must be contained in either $V(\widetilde{G}_i)\cup N_i^2$ or $V(G_i)\setminus N_i^2$. Accounting for the fact that this holds for each $i\in \{1,\dots,a\}$, we can conclude that $V(M)$ is either contained in $V(G_i)\setminus N_i^2\subseteq V(G_i)$ for some $i\in \{1,\dots,a\}$, or in $\bigcap_{i=1}^a (V(\widetilde{G}_i)\cup N_i^2) = V'$.
        \end{proofofclaim}
\begin{figure}[!ht]
\begin{center}
\begin{minipage}{0.85\textwidth}
\centering
\includegraphics[width=\textwidth]{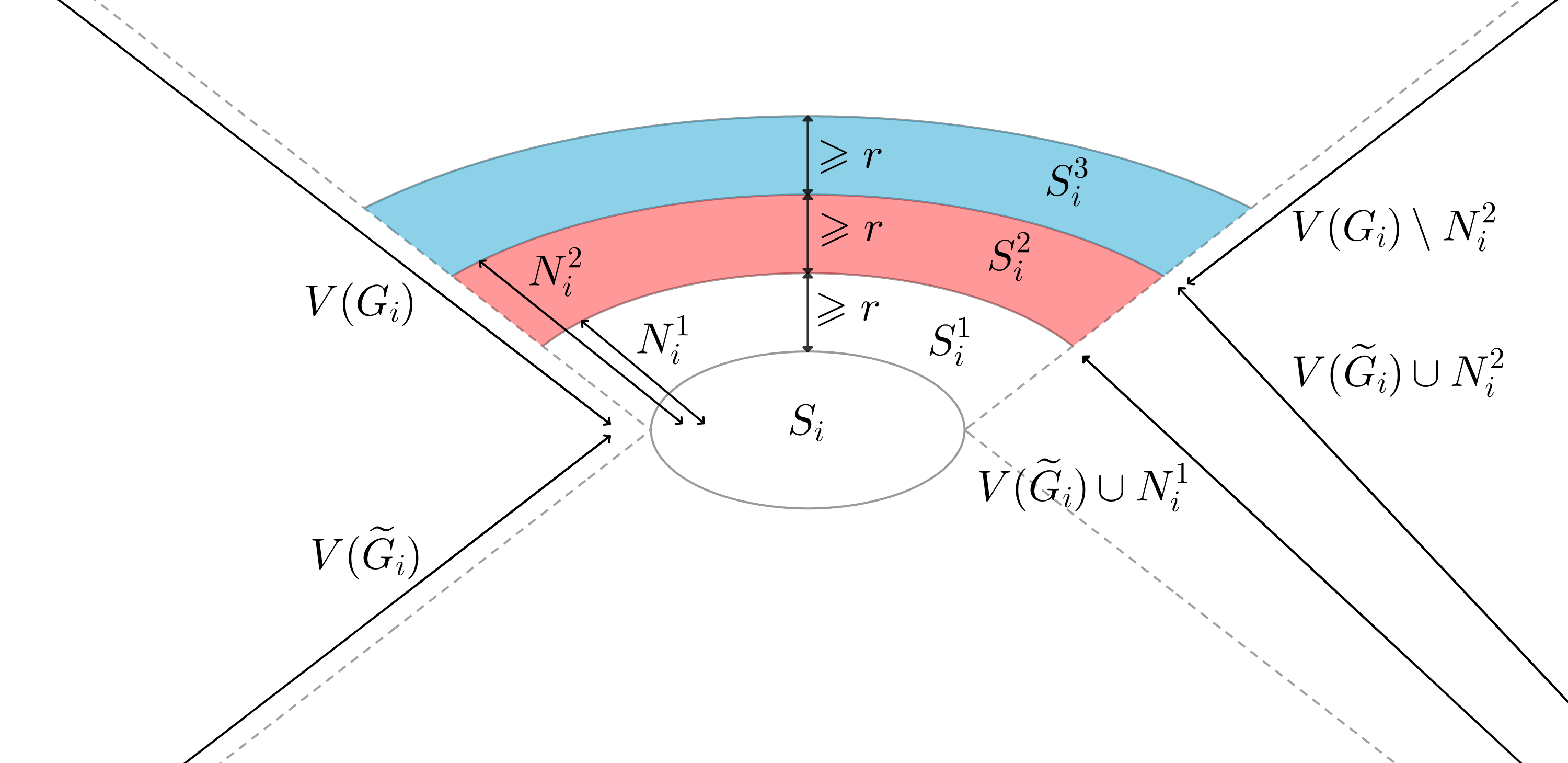}
\caption{\label{FigRSep}A diagram of the separation $(G_i,\widetilde{G}_i)$ of $G$ for some $i\in \{1,\dots,a\}$, with several relevant sets of vertices near the separator $S_i$ labelled. Vertices and edges of $G$ are not depicted, instead the dotted line denotes a region in which all vertices and edges are contained. Observe that any $r$-path $P$ from a vertex in $V(G_i)\setminus N_i^2$ to a vertex in $V(\widetilde{G}_i)\cup N_i^2$ must contain a vertex in both $S_i^2$ and $S_i^3$, which are monochromatic of different colours, depicted here as red and blue. Thus, $P$ cannot be monochromatic. Additionally, observe that any $r$-path from a vertex in $S_i^2$ to a vertex in $\widetilde{G}_i\cup N_i^1$ must contain a vertex in $S_i^1$.}
\end{minipage}
\end{center}
\end{figure}
        By assumption, $G_0,\dots,G_a$ are all isometric subgraphs of $G$. Thus, any monochromatic $r$-component $M$ of $G$ under $c$ whose vertices are contained in $V(G_i)$ for some $i\in \{1,\dots,a\}$ is also a monochromatic $r$-component of $G_i$ under $c_i$; in this case $V(M)$ has weak diameter at most $D$ in $G_i$ and $G$. Similarly, any monochromatic $r$-component $M$ of $G$ under $c$ whose vertices are contained in $V(G_0)$ is a subgraph of a monochromatic $r'$-component of $G_0$ under $c_0$ as $r\leq r'$, and thus $V(M)$ has weak diameter at most $d\leq D$ in $G_0$ and $G$. So henceforth, we only need to consider monochromatic $r$-components whose vertices are contained in $V'$ but are not contained in any $V(G_i)$, $i\in \{0,\dots,a\}$. Consider such a monochromatic $r$-component $M$.
    	    
        \textbf{Claim:} For each $v\in V(M)$, there exists $s\in V(G_0)\cap N_G^{\ell'}(v)$ with the same colour as $v$ under $c$.
    	\begin{proofofclaim}
            This is trivially true for any $v\in V(G_0)$, and for any $v\in S_i^1$ for some $i\in \{1,\dots,a\}$, the claim follows since $c_i$ has an $(S_i,r)$-barrier and $r\leq \ell'$. Since $V(M)\subseteq V'$, we now only need to consider $v\in S_i^2$ for some $i\in \{1,\dots,a\}$. Observe that $V(M)$ is not contained in $S_i^2$, as it is not contained in $V(G_i)$, thus there must exist some $u\in V(M)\setminus S_i^2$; since $V(M)\subseteq V'$, this means that $u\in V(\widetilde{G}_i)\cup N_i^1$. Because $u,v$ are both in $M$, there exists a monochromatic $r$-path $P$ from $v\in S_i^2$ to $u\in V(\widetilde{G}_i)\cup N_i^1$ in $G$ of the same colour. Observe that $P$ must contain a vertex in $S_i^1$; see \cref{FigRSep}. Let $x$ be the first (in the order of the sequence of $P$) such vertex in $S_i^1\cap P$; observe that every vertex before $x$ must be in $S_i^2$. Thus, the $r$-subpath $P'$ of $P$ from $v$ to $x$ is an $r$-path contained in $S_i^1\cup S_i^2\subseteq N_i^2$. Since $c_i$ has an $(S_i,r)$-barrier, there must be an $s\in S_i\cap N_G^r(x)\subseteq V(G_0)$ with the same colour as $x$. Hence $v$ and $s$ have the same colour, and there is an $r$-path contained in $N_i^2$ from $v$ to $s$ obtained by appending $s$ to the end of $P'$. Since $S_i$ is a $(k,\ell)$-centred set in $G$, $N_i^2$ is a $(k,\ell+2r)$-centred set in $G$. Thus, we can apply \cref{rpathInsideCentred} to find that $\dist_G(v,s)\leq (k+1)(2r+2(2r+\ell))=\ell'$, as desired.
    	\end{proofofclaim}
    	    
        Therefore, we may define a map $\iota:V(M)\rightarrow S$ that sends $v\in V(M)$ to some $s\in V(G_0)\cap N_G^{\ell'}(v)$ with the same colour as $v$. Now, for any two vertices $x,y$ in $V(M)$, we know that there exists a monochromatic $r$-path $P$ from $x$ to $y$ in $G$ whose vertices are contained in $V(M)$. Thus, every $r$-subpath of $P-V(M)$ has weak diameter in $G$ at most 0. By applying \cref{rerouting}, we obtain a monochromatic $(0+2r+2\ell')=r'$-path $P'$ in $G$ of the same colour as $P$ but whose interior is contained in $V(G_0)$. As $G_0$ is an isometric subgraph, $\Int(P')$ is a monochromatic $r'$-path in $G_0$ under $c_0$, and thus has weak diameter in $G_0$ and $G$ at most $d$. Hence, $P$ has weak diameter in $G$ at most $d+2r'$; in particular $\dist_G(x,y)\leq d+2r'\leq D$, as desired.
    \end{proof}

    \section{Colourings on Tree-Decompositions}

    Given graphs $G,H$, an \defn{$H$-decomposition} of $G$ is a collection $(B_{x}:x\in V(H))$ of subsets of $V(G)$ such that (a) for each $v\in V(G)$, the subgraph of $H$ induced by the vertices $x\in V(H)$ with $v\in B_x$ is nonempty and connected; and
    (b) for each $uv\in E(G)$, there exists $x\in V(H)$ such that $u,v\in B_x$. The subsets $B_x$, $x\in V(H)$, are called \defn{bags}. The \defn{width} of an $H$-decomposition is $\max_{x\in V(H)}(|B_x|-1)$. The \defn{adhesion} of an $H$-decomposition is $\max_{xy\in E(H)}(|B_x\cap B_y|)$. If $V(H)\subseteq V(G)$, $(B_x:x\in V(H))$ is \defn{rooted} if for each $x\in V(H)$, $x\in B_x$. An $H$-decomposition of $G$ is a \defn{tree-decomposition} of $G$ if $H$ is a tree $T$; the \defn{treewidth} of $G$ is then the minimum width of a tree-decomposition of $G$. Note that 
    every graph of treewidth at most $k$ has a tree-decomposition of width and adhesion at most $k$.

    Given a tree-decomposition $(B_t:t\in V(T))$ of a graph $G$ and a vertex $t\in V(T)$, the \defn{torso} of $G$ at $t$ with respect to $(B_t:t\in V(T))$, denoted \defn{$\UWT{G}{B_t}$}, is the graph obtained from $G[B_t]$ by adding an edge $uv$ whenever there exists $t'\in V(T)$ adjacent to $t$ such that $u,v\in B_t\cap B_{t'}$, provided $uv$ does not already exist in $G[B_t]$. This is a standard definition, which we now extend. If $G$ is a weighted graph, we define the \defn{weighted torso} of $G$ at $t$ with respect to $(B_t:t\in V(T))$, denoted \defn{$\WT{G}{B_t}$}, to be the weighted graph obtained by imbuing $\UWT{G}{B_t}$ with the weighting $w$ defined by $w(uv):=\dist_G(u,v)$ for each $uv\in E(\UWT{G}{B_t})$. We emphasise that the weight of $uv$ is determined by the distance between $u$ and $v$ in the whole graph $G$, not merely in $G[B_t]$, and that even when $G$ is unweighted, the weighted torso is usually distinct from the torso, with edge weights other than $1$.
    
    For any subtree $T'$ of $T$, consider $\widehat{G}_{T'}:=\bigcup_{t\in V(T')}\WT{G}{B_t}$; note that this is a well-defined weighted graph, as the weightings on different weighted torsos agree. Observe that shortest paths in $G$ can only exit $V(\widehat{G}_{T'})$ via a vertex in $B_t\cap B_{t'}$ for some $tt'\in E(T)$ with $t\in V(T')$, $t'\in V(T)\setminus V(T')$, and must also reenter $V(\widehat{G}_{T'})$ via another vertex in $B_t\cap B_{t'}$; at this point, the extra edges added to $G[B_t]$ to form $\WT{G}{B_t}$ provide a shortcut of the same length in $\widehat{G}_{T'}$. It follows that $\widehat{G}_{T'}$ is isometric in $G$. In particular, taking $V(T'):=\{t\}$ shows that the weighted torso $\WT{G}{B_t}$ is isometric in $G$, this is our main motivation for considering weighted torsos. Furthermore, we call $\widehat{G}_{T}$ the \defn{completion} of $G$ with respect to $(B_t:t\in V(T))$, and denote it simply as \defn{$\widehat{G}$}. By the previous observation, $\widehat{G}$ is isometric in $G$; in particular, $\dist_G=\dist_{\widehat{G}}$. As all edges of $\widehat{G}$ had weight $\dist_G$, this means that the weighting on $\widehat{G}$ is $\dist_{\widehat{G}}$. Also notice that $(B_t:t\in V(T))$ is still a tree-decomposition for $\widehat{G}$, and that for each $t\in V(T)$, $\UWT{\widehat{G}}{B_t}=\UNW(\widehat{G}[B_t])=\UWT{G}{B_t}$; because the weighting on $\widehat{G}$ is $\dist_{\widehat{G}}$, this gives $\WT{\widehat{G}}{B_t}=\widehat{G}[B_t]=\WT{G}{B_t}$. It follows that $\widehat{(\widehat{G})}=\widehat{G}$, hence the name ``completion". We also know that for each subtree $T'$ of $T$, $\widehat{(\widehat{G})}_{T'}=\bigcup_{t\in V(T')}\widehat{G}[B_t]=\widehat{G}_{T'}$ is isometric in $G$ and hence also $\widehat{G}$; this is critical to later proofs.

    For an integer $k\geq 0$ and a class of weighted graphs $\scr{H}$, we say that a tree-decomposition $(B_t:t\in V(T))$ of a weighted graph $G$ is a \defn{$(k,\scr{H})$-construction} of $G$ if it has adhesion at most $k$ and, for each $t\in V(T)$, $\WT{G}{B_t}\in \scr{H}$. If $G$ admits a $(k,\scr{H})$-construction, we say that $G$ is \defn{$(k,\scr{H})$-constructable}.

    A \defn{partition} $\scr{P}$ of a weighted graph $G$ is a collection of nonempty pairwise disjoint subsets of $V(G)$ such that $\bigcup_{P\in \scr{P}} P=V(G)$ and each $P\in \scr{P}$ induces a connected subgraph of $G$. The subsets $P\in \scr{P}$ are called the \defn{parts} of $\scr{P}$; for a vertex $v\in V(G)$, we use \defn{$\Part_{\scr{P}}(v)$} to denote the unique part $P\in \scr{P}$ for which $v\in P$, and for a set $S\subseteq V(G)$, we use \defn{$\Parts_{\scr{P}}(S)$} to denote the set of all parts $P\in \scr{P}$ for which $S\cap P\neq \emptyset$. The \defn{quotient} of $\scr{P}$ in $G$, denoted \defn{$G/\scr{P}$}, is the graph with vertex set $\scr{P}$ and an edge between two parts if the subgraphs of $G$ they induce are adjacent in $G$; note that $G/\scr{P}$ is a minor of $G$. For a real number $\ell\geq 0$, $\scr{P}$ is \defn{$\ell$-shallow} if, for each part $P\in \scr{P}$, $G[P]$ has radius at most $\ell$. If, for an integer $k\geq 0$, $G/\scr{P}$ also has treewidth at most $k$, then we say that $\scr{P}$ is a \defn{$(k,\ell)$-partition} of $G$.

    The main results of this section are as follows.

    \begin{theorem}
        \label{ANdimConstructable}
        Let $k\geq 0$ be an integer, let $\scr{H}$ be a class of weighted graphs, and let $\scr{G}$ be a hereditary class of weighted graphs that are all $(k,\scr{H})$-constructable. Then $\ANdim(\scr{G})\leq \max(\ANdim(\scr{H}),1)$
    \end{theorem}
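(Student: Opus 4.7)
Set $n := \max(\ANdim(\scr{H}), 1) \geq 1$ and let $f$ be a dilation $n$-dimensional control function for $\scr{H}$ with $f(r) \leq c\,r$. Fix $G \in \scr{G}$ and a $(k, \scr{H})$-construction $(B_t : t \in V(T))$. Since $\widehat{G}$ is tight in $G$, it suffices to find a dilation control function for $\widehat{G}$. Root $T$ at an arbitrary $t_0$; for each $t$, let $T_t$ be the subtree rooted at $t$, let $p(t)$ be its parent, and set $S_t := B_t \cap B_{p(t)}$ (with $S_{t_0} := \emptyset$), so the adhesion bound gives $|S_t| \leq k$ and makes $S_t$ a $(k, 0)$-centred set. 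The plan is to prove, by induction on $|V(T_t)|$, the following claim: there is a constant $D = D(c, k)$ such that for every $t$, every $r > 0$, and every $(n+1)$-colouring $c_S$ of $S_t$, the graph $\widehat{G}_{T_t}$ admits an $(n+1, r, Dr)$-colouring that extends $c_S$ and has an $(S_t, r)$-barrier in $\widehat{G}_{T_t}$. Applied at $t_0$ this yields an $(n+1, r, Dr)$-colouring of $\widehat{G}$, so by \cref{dimToColouring} the dilation $r \mapsto Dr$ is an $n$-dimensional control function for $\scr{G}$, proving the theorem.

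For the base case ($t$ a leaf), $\widehat{G}_{T_t} = \widehat{G}[B_t] = \WT{G}{B_t}$ lies in $\scr{H}$, hence admits an $(n+1, r, f(r))$-colouring. Overwrite it on $Z := N_{\widehat{G}[B_t]}^{3r}(S_t)$ by any extension of $c_S$ that carries an $(S_t, r)$-barrier; such an extension exists since $n + 1 \geq 2$, as noted after the definition of a barrier. To bound monochromatic $r$-components of the modified colouring, apply \cref{rerouting}: the $r$-subpaths outside $Z$ remain monochromatic for $\scr{H}$'s original colouring, hence have weak diameter at most $f(r) \leq cr$, while every vertex of $Z$ projects into $S_t$ with $\ell = 3r$. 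Since $|S_t| \leq k$, the resulting rerouted walk has at most $k + 2$ distinct vertices, giving total weak diameter at most $(k+1)(f(r) + 8r) \leq (k+1)(c + 8)r$.

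For the inductive step, let $t$ have children $t_1, \dots, t_a$, and set $G_0 := \widehat{G}[B_t]$ and $G_i := \widehat{G}_{T_{t_i}}$. Each $G_i$ is tight in $\widehat{G}_{T_t} = \bigcup_{i=0}^a G_i$, and $S_{t_i} = B_t \cap B_{t_i} \subseteq V(G_0)$. With $r' := (12k+14)r$ (the value of $r'$ in \cref{boundComponentsDiameter} for $\ell = 0$), construct $c_0$ by the base-case technique but starting instead from an $(n+1, r', f(r'))$-colouring of $\widehat{G}[B_t]$; the analogous rerouting at path-scale $r'$ shows that $c_0$ is an $(n+1, r', D' r')$-colouring for some constant $D' = D'(c, k)$, and $c_0$ has an $(S_t, r)$-barrier in $\widehat{G}[B_t]$. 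For each $i$, apply the inductive hypothesis to $T_{t_i}$ with seed $c_0|_{S_{t_i}}$ to obtain $c_i$. Choosing $D := \max\{(k+1)(c+8),\ (D'+2)(12k+14)\}$ makes $D \geq D' r' + 2r'$, so \cref{boundComponentsDiameter} applies and yields that $c^+ := c_0 \cup \bigcup_i c_i$ is an $(n+1, r, Dr)$-colouring of $\widehat{G}_{T_t}$ extending $c_S$.

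The main obstacle is to verify that $c^+$ has an $(S_t, r)$-barrier in the whole $\widehat{G}_{T_t}$ and not merely in $\widehat{G}[B_t]$. On vertices of $B_t$ the barrier of $c_0$ transfers directly by tightness of $\widehat{G}[B_t]$ in $\widehat{G}_{T_t}$, but the vertices of $N_{\widehat{G}_{T_t}}^{3r}(S_t) \setminus B_t$ lie in some $V(\widehat{G}_{T_{t_i}}) \setminus B_t$ and are coloured by the inductive $c_i$. Since $S_{t_i}$ separates such a vertex from $S_t$, it must also lie within $3r$ of $S_{t_i}$, so its colour is dictated by $c_i$'s $(S_{t_i}, r)$-barrier. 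To align these with the desired barrier around $S_t$, the plan is to fix the pair $\alpha, \beta$ of barrier colours globally (the same choice in every $c_0, c_1, \dots, c_a$) and exploit the freedom in the inductive seed $c_0|_{S_{t_i}}$ and in each barrier extension to propagate colour values consistently layer by layer. This coordination is the bulk of the technical bookkeeping, but it follows from the flexibility of barrier extensions highlighted right after the definition of an $(S, r)$-barrier.
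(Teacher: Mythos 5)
Your approach is in the same spirit as the paper's (root the tree, induct over subtrees, seed the colouring at the separator, build barriers near separators, combine with the barrier lemma). However, there is a genuine gap in the inductive claim, precisely at the point you flag as the "main obstacle."

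Your claim requires the output colouring of $\widehat{G}_{T_t}$ to carry an $(S_t,r)$-barrier in the whole $\widehat{G}_{T_t}$. The trouble is that a vertex $v\in N^{2r}_{\widehat{G}_{T_t}}(S_t)\setminus N^{r}_{\widehat{G}_{T_t}}(S_t)$ lying in $V(\widehat{G}_{T_{t_i}})\setminus B_t$ is coloured by the recursive $c_i$, and its colour is already over-determined. Since every path from $v$ to $S_t$ crosses $S_{t_i}$, one easily arranges $\dist(v,S_{t_i})\leq r$ while $\dist(v,S_t)\in(r,2r]$: e.g.\ take the closest $s''\in S_{t_i}$ at distance $0.5r$ with $\dist(s'',S_t)=0.8r$. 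Barrier condition~(a) of $c_i$ around $S_{t_i}$ then forces $c_i(v)=c_0(s')$ for some $s'\in S_{t_i}$ with $\dist(v,s')\leq r$, and such an $s'$ can lie within distance $r$ of $S_t$; barrier condition~(a) of $c_0$ around $S_t$ then forces $c_0(s')$ to equal some seed colour $c_S(s)$ with $s\in S_t$. But the barrier you are trying to install around $S_t$ insists that $c^+(v)=\alpha$, and the seed $c_S$ need not use the colour $\alpha$ at all. Fixing $\alpha,\beta$ globally does not help here, because condition~(a) couples the colours in the $r$-shell to the arbitrary seed colours, not to the pair $\alpha,\beta$; the "flexibility of barrier extensions" only exists in the shells between distance $r$ and $3r$, which is exactly the region being overruled by the child colourings.

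The paper sidesteps this by not asking the recursion to produce a barrier as output at all. Instead, before recursing into a subtree $T_e$, it explicitly pre-colours the full $3r$-neighbourhood $Z_e$ of the separator $S_e$ inside $G_e$ (with an $(S_e,r)$-barrier baked in), and the recursive call is only required to extend that pre-colouring (see the role of $c_Z$ in \cref{ANdimSCLemma}, and the construction of $c_{Z_e}$ in its proof). That way the barrier around each separator is entirely under the parent's control, and the clash you run into never arises. If you rewrote your inductive claim so that the input were a pre-colouring of the whole $3r$-neighbourhood of $S_t$ in $\widehat{G}_{T_t}$ (rather than just of $S_t$), and dropped the requirement that the output carry a barrier, you would essentially recover the paper's argument. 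As written, however, the inductive step does not close.

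A secondary point worth noting: because the barrier-carrying region is not pre-coloured, your construction of $c_0$ from a fresh $(n+1,r',f(r'))$-colouring of $\widehat{G}[B_t]$ and then a local overwrite only agrees with the desired seed $c_S$ on $S_t$ itself, so it is not clear that the combined $c^+$ extends $c_S$ in a way compatible with the $(S_{t_i},r)$-barriers the children are supposed to honour; this is the same issue viewed from the other end, and it dissolves under the pre-colouring formulation.
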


    \begin{theorem}
        \label{ANdimPartition}
        For every integer $k\geq 0$, there exists a dilation $f_k$ such that for every real number $\ell\geq 0$, $f_k$ is an $\ell$-almost $1$-dimensional control function for every weighted graph that admits a $(k,\ell)$-partition.
    \end{theorem}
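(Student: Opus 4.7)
The plan is to give a $2$-colouring of $G$ based on BFS from a distinguished vertex, and to use the tree-decomposition of $G/\scr{P}$ to control the weak diameter of monochromatic components via \cref{rpathInsideCentred}. Fix a tree-decomposition $(B_t : t \in V(T))$ of $G/\scr{P}$ of width at most $k$, and for each part $P \in \scr{P}$ a central vertex $c_P$ witnessing radius at most $\ell$; then each lifted bag $\widetilde{B}_t := \bigcup_{P \in B_t} P$ is a $(k+1,\ell)$-centred set in $G$, with centre $\{c_P : P \in B_t\}$. Pick any $v_0 \in V(G)$ and define $\phi(v) := \dist_G(v_0, v)$; then colour $v$ by $\lfloor \phi(v)/N \rfloor \bmod 2$ for $N$ a constant multiple of $r$ (depending only on $k$) to be tuned. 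Because consecutive BFS layers alternate in colour, and $N > r$, every monochromatic $r$-component will lie in a single layer $L_j := \{v : \phi(v) \in [jN, (j+1)N)\}$.

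The core step is to bound the weak $G$-diameter of a monochromatic $r$-component $M \subseteq L_j$. For $u, w \in V(M)$, there is an $r$-path $u = x_0, \ldots, x_n = w$ in $L_j$. I plan to exploit the tree-decomposition of $G/\scr{P}$ to locate a small family of bags controlling the entire $r$-path: the parts $\Part_{\scr{P}}(x_i)$ form a connected subgraph of $G/\scr{P}$, so the bags containing them form a subtree $T_M \subseteq T$. The key structural claim I will prove is that, thanks to the layer constraint $\phi(x_i) \in [jN, (j+1)N)$, the subtree $T_M$ has a ``spine'' of length bounded in terms of $k$: more precisely, there are at most $O(k)$ parts $P$ such that $c_P$ lies at $G$-distance at most $O(N)$ from every $x_i$. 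The lifted union of these parts will then form a $(O(k), O(N))$-centred set in $G$ containing the entire $r$-path.

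Applying \cref{rpathInsideCentred} to the $r$-path $x_0, \ldots, x_n$ within this centred set will then give $\dist_G(u, w) \leq (O(k) + 1)(2r + 2\cdot O(N)) = O(k) \cdot N$, and since $N$ is a constant times $r$, this bounds the weak diameter by $C_k \cdot r$ for a constant $C_k$ depending only on $k$; setting $f_k(r) := C_k r$ completes the proof. The requirement $r \geq \ell$ enters exactly to ensure $\ell \leq r \leq N$, so the ``centre'' distance $O(N)$ dominates the individual part radii.

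The main obstacle is the structural claim that BFS-layer-restricted $r$-paths are confined to a short spine of the tree-decomposition. For trees (treewidth $1$), this is the clean LCA argument: consecutive $x_i, x_{i+1}$ have LCA at depth at least $jN - r/2$, so the subtree carrying the $r$-path is contained in layer $L_{j-1} \cup L_j$, making the endpoint distance $O(N)$. The main technical work will be to extend this ``LCA-trap'' argument from trees to width-$k$ tree-decompositions, using Helly-type intersection properties of subtrees of $T$ together with the fact that the $(k+1, \ell)$-centred structure of each $\widetilde{B}_t$ means that once the spine is identified, the centred set needed for \cref{rpathInsideCentred} is immediate.
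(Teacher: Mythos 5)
Your proposal takes a genuinely different route from the paper. The paper derives \cref{ANdimPartition} as a corollary of the much heavier \cref{ANdimSC}: it observes that a weighted graph with a $(k,\ell)$-partition is $(k,\ell,\scr{H})$-strongly-constructable where $\scr{H}$ is the class of graphs whose vertex set is a $(k+1,\ell)$-centred set, shows via \cref{rpathInsideCentred} that $\scr{H}$ admits a linear $\ell$-almost $0$-dimensional control function, and then invokes the recursive tree-decomposition machinery of \cref{ANdimSCLemma} (the barrier construction and \cref{boundComponentsDiameter}) to boost this to a $1$-dimensional control function. You propose instead a direct BFS-layering argument: colour by $\lfloor \phi(v)/N\rfloor \bmod 2$ and show each monochromatic $r$-component has weak diameter $O_k(r)$.

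There is, however, a genuine gap in the proposal: the ``spine'' claim is both unproven and unclearly stated, and it carries the entire weight of the argument. As written (``there are at most $O(k)$ parts $P$ such that $c_P$ lies at $G$-distance at most $O(N)$ from every $x_i$''), it is an upper bound on a count, which by itself cannot yield the $(O(k),O(N))$-centred set you then claim contains the $r$-path; presumably you mean there \emph{exist} $O(k)$ parts whose centres collectively $O(N)$-cover all $x_i$, which is a covering statement of a different shape. Even under the intended reading, the proposal offers no argument for it beyond ``Helly-type intersection properties'' and an ``LCA-trap'' analogy. The LCA argument you sketch does prove the tree ($k=1$) case cleanly, because in a tree any path from $x_i$ to $x_{i+1}$ passes through $\mathrm{lca}(x_i,x_{i+1})$, forcing the Steiner tree of the $r$-path into a thin band of $\phi$-values. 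But lowest common ancestors have no canonical analogue in a width-$k$ tree-decomposition: the separators are size-$\leq k$ sets rather than single vertices, and the union of the subtrees $T_P$ is not itself a single subtree with a well-defined ``deepest point.'' Extending the LCA-trap to this setting is essentially as hard as the theorem itself, and it is precisely where the paper deploys its barrier/separation recursion (\cref{boundComponentsDiameter}, \cref{ANdimSCLemma}). Until that structural claim is proved, the proposal is a plan rather than a proof; the remaining steps (the layer-confinement observation, the application of \cref{rpathInsideCentred} once a centred set is in hand) are correct but routine.
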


   For any graph $G$ of treewidth at most $k$, the partition into singletons is a $(k,0)$-partition of $G$; this gives the following as an easy application of \cref{ANdimPartition}.

    \begin{theorem}
        \label{ANdimTw}
        For any integer $k\geq 0$, any class of weighted graphs of treewidth at most $k$ has Assouad--Nagata dimension at most 1.
    \end{theorem}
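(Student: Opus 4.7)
The plan is to apply \cref{ANdimPartition} directly, following the hint stated immediately before the theorem. Given any weighted graph $G$ of treewidth at most $k$, the partition $\scr{P} := \{\{v\} : v \in V(G)\}$ into singletons is the obvious candidate $(k,0)$-partition. Checking the two required conditions is routine: each singleton part induces a one-vertex subgraph, which is trivially connected and has radius $0$, so $\scr{P}$ is $0$-shallow; and the definition of quotient shows that the map $v \mapsto \{v\}$ exhibits an isomorphism between $G$ (viewed as an unweighted graph) and $G/\scr{P}$, so $G/\scr{P}$ has treewidth at most $k$. Hence $\scr{P}$ is indeed a $(k,0)$-partition of $G$.

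Having verified this, I would invoke \cref{ANdimPartition} with $\ell = 0$ to obtain a dilation $f_k$ (depending only on $k$, not on $G$) that is a $0$-almost $1$-dimensional control function for $G$. As the paper notes immediately after defining $\ell$-almost control functions, a $0$-almost $n$-dimensional control function is simply an $n$-dimensional control function, so $f_k$ is a $1$-dimensional control function for $G$ that is also a dilation. Because $f_k$ does not depend on the choice of $G$ within the class, it serves as a $1$-dimensional dilation control function for the entire class, yielding $\ANdim(\scr{G}) \leq 1$.

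There is essentially no main obstacle: the only content of the proof is confirming that the singleton partition sits in the hypothesis of \cref{ANdimPartition}, and both conditions ($0$-shallowness and the treewidth bound on the quotient) are immediate from the definitions. All the real work was done in establishing \cref{ANdimPartition} itself.
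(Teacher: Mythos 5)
Your proof is correct and matches the paper's own argument exactly: the paper also observes that the singleton partition is a $(k,0)$-partition and then invokes \cref{ANdimPartition} with $\ell=0$. Nothing to add.
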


    \cref{ANdimConstructable,ANdimPartition} are both special cases of a more general result, \cref{ANdimSC}, which we work towards stating now.

    For any weighted graph $G$, partition $\scr{P}$ of $G$, and tree-decomposition $(B_t:t\in V(T))$ of $G/\scr{P}$, observe that $(\bigcup_{P\in B_t}P:t\in V(T))$ is a tree-decomposition of $G$. For an integer $k\geq 0$, a real number $\ell\geq 0$, and a class of weighted graphs $\scr{H}$, we say that the pair $(\scr{P},(B_t:t\in V(T)))$ is a \defn{$(k,\ell,\scr{H})$-strong-construction} for $G$ if:

    \begin{enumerate}[label=(\alph*)]
        \item For each $t\in V(T)$ and $\scr{P}_t\subseteq B_t$, $\WT{G}{\bigcup_{P\in B_t}P}[\bigcup_{P\in \scr{P}_t}P]\in \scr{H}$;
        \item $\scr{P}$ is $\ell$-shallow; and
        \item $(B_t:t\in V(T))$ has adhesion at most $k$.
    \end{enumerate}

    We say that $G$ is \defn{$(k,\ell,\scr{H})$-strongly-constructable} if it admits a $(k,\ell,\scr{H})$-strong-construction.

    \begin{theorem}
        \label{ANdimSC}
        For every integer $k\geq 0$, and function $f:\ds{R}^+\rightarrow \ds{R}^+$, there exists a function $f_k:\ds{R}^+\rightarrow \ds{R}^+$ such that the following holds. Let $n\geq 0$ be an integer, let $\ell\geq 0$ be a real number, and let $\scr{H}$ be a class of graphs that admits $f$ as an $\ell$-almost $n$-dimensional control function. Set $n':=\max(n,1)$, then for every weighted graph $G$ that admits a $(k,\ell,\scr{H})$-strong-construction, $f_k$ is an $\ell$-almost $n'$-dimensional control function for $G$. Further, if $f$ is a dilation, then so is $f_k$.
    \end{theorem}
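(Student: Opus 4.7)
The plan is to prove \cref{ANdimSC} by induction on $|V(T)|$, strengthening the inductive hypothesis so that we can prescribe the colouring on a $(k,\ell)$-centred set $S$ contained in a designated root bag and force the output colouring to have an $(S,r)$-barrier, using two colours $\alpha,\beta$ fixed once globally for the entire proof. This strengthening is exactly what enables gluing colourings across bag separators via \cref{boundComponentsDiameter}; the global choice of $\alpha,\beta$ ensures that barriers arising from different separators merge consistently. I write $V_t:=\bigcup_{P\in B_t}P$ throughout.

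\emph{Base case $|V(T)|=1$.} Here $G=H_{t_0}:=\WT{G}{V_{t_0}}\in\scr{H}$. Let $Z$ be the union of all parts of $B_{t_0}$ meeting $N_G^{3r}(S)$: since parts have radius $\leq\ell\leq r$, $Z$ is sandwiched between $N_G^{3r}(S)$ and $N_G^{3r+2\ell}(S)$, and is a $(k,3r+3\ell)$-centred set in $G$. By property~(a) of strong-construction, $H_{t_0}[V_{t_0}\setminus Z]\in\scr{H}$ and so admits an $(n+1,r,f(r))$-colouring from $\scr{H}$'s control function; we combine this with a manual colouring of $Z$ realising the $(S,r)$-barrier (each $v\in N_G^{r}(S)\setminus S$ copies the colour of some $s\in N_G^{r}(v)\cap S$; the annuli $N_G^{2r}(S)\setminus N_G^{r}(S)$ and $N_G^{3r}(S)\setminus N_G^{2r}(S)$ get $\alpha$ and $\beta$; the buffer $Z\setminus N_G^{3r}(S)$ also gets $\beta$; $S$ takes $c_S$). \cref{colouringMinusCentredSet} then bounds the monochromatic $r$-components of the combined colouring by a value $f^{\circ}(r)$ that is a dilation in $r$ whenever $f$ is.

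\emph{Inductive step.} Root $T$ at $t_0$ with children $t_1,\dots,t_a$; for each $i$ set $G_i:=\widehat{G}_{T_i}$, which is tight in $G$ and inherits a $(k,\ell,\scr{H})$-strong-construction with strictly fewer bags, and let $S_i:=\bigcup_{P\in B_{t_0}\cap B_{t_i}}P\subseteq V_{t_0}$, which is a $(k,\ell)$-centred set in $G$ because it is a union of at most $k$ parts of radius $\leq\ell$. With $\ell':=(k+1)(6r+2\ell)$ and $r':=2r+2\ell'$ as in \cref{boundComponentsDiameter}, a variant of the base-case construction (using $\scr{H}$'s colouring at the larger scale $r'$ but still building the barrier at scale $r$) applied to $G_0:=H_{t_0}$ yields $c_0$ that is an $(n'+1,r',f^{\circ}(r'))$-colouring, extends $c_S$, and has an $(S,r)$-barrier in $H_{t_0}$. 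For each $i\geq 1$, apply the IH to $G_i$ (root $t_i$, boundary $S_i$, prescribed colouring $c_0|_{S_i}$, scale $r$) to obtain $c_i$ with $(S_i,r)$-barrier in $G_i$. Invoking \cref{boundComponentsDiameter} on $(G_0,G_1,\dots,G_a)$, the combined colouring $c:=\bigcup_i c_i$ is an $(n'+1,r,D)$-colouring of $G$ with $D\leq f^{\circ}(r')+2r'$; since $r'$ is bounded linearly in $r$ whenever $\ell\leq r$, defining $f_k(r):=f^{\circ}(r')+2r'$ gives a dilation whenever $f$ is. Crucially, $G_0$ is coloured via the base-case construction rather than recursively, so the constant in $f_k$ does not accumulate with tree depth.

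The main obstacle is verifying that $c$ has an $(S,r)$-barrier in $G$ (not merely in $H_{t_0}$): the defining neighbourhoods $N_G^{ir}(S)$ can include vertices of $V(G_i)\setminus V_{t_0}$ whose colours are determined by $c_i$ rather than $c_0$. Any such vertex $v$ must pass through $S_i$ to reach $S$, so $v\in N_G^{ir}(S_i)$; because $\alpha$ and $\beta$ are used globally across every separator's barrier, the $(S_i,r)$-barrier of $c_i$ assigns $v$ exactly the colour required by the $(S,r)$-barrier of $c$. A careful but routine case analysis on neighbourhood containments, aided by the large buffer $\ell'$ in \cref{boundComponentsDiameter}, verifies all barrier conditions. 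The special cases \cref{ANdimConstructable} (take $\scr{P}$ to be the partition into singletons with $\ell=0$) and \cref{ANdimPartition} (take $\scr{H}$ to be the class of graphs of diameter $0$, so $n=0$ and the $\scr{H}$-component of the colouring degenerates) then follow immediately.
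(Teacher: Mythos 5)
There is a genuine gap in the barrier-propagation step, and it is precisely the difficulty that drives the more elaborate two-layered induction in the paper's Lemma~\ref{ANdimSCLemma}.

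Your strengthened inductive hypothesis prescribes only the colouring $c_S$ on $S$ and asks the recursion to produce an $(S,r)$-barrier. When you combine $c_0$ on $G_0 = H_{t_0}$ with the $c_i$'s on the children $G_i$, the $(S,r)$-barrier of the combined colouring $c$ must hold in $G$, and $N_G^{2r}(S)\setminus N_G^r(S)$ can intersect $V(G_i)\setminus V(G_0)$. You claim the global choice of $\alpha,\beta$ forces the right colour there, but the $(S_i,r)$-barrier of $c_i$ only controls colours as a function of $\dist_G(\cdot,S_i)$, not $\dist_G(\cdot,S)$, and $S$ and $S_i$ are unrelated sets. Concretely, take $v\in V(G_i)$ with $r < \dist_G(v,S) \leq 2r$ but $\dist_G(v,S_i)\leq r$ (possible, since the shortest $v$-to-$S$ path passes through $S_i$ but need not spend length $>r$ on the $G_i$ side). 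The $(S,r)$-barrier demands $c(v)=\alpha$. The $(S_i,r)$-barrier of $c_i$ instead makes $c_i(v)$ copy $c_0(s)$ for some $s\in S_i\cap N_G^r(v)$, and $c_0(s)$ is whatever $c_0$ assigned based on $\dist_G(s,S)$, which may well be $\beta$ or a copied colour from $S$. Nothing ties $c_i(v)$ to $\alpha$. The phrase ``the $(S_i,r)$-barrier of $c_i$ assigns $v$ exactly the colour required by the $(S,r)$-barrier'' is the step that fails.

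The paper sidesteps this by making the \emph{entire} barrier region part of the prescribed input: the inductive interface receives a colouring $c_Z$ on $Z$, the union of all parts meeting $N_G^{3r}(S)$, not merely on $S$; the recursive calls are then \emph{required} to extend $c_{Z_e}$ (built with the desired barrier), rather than asked to \emph{produce} barriers. This in turn forces $G_0$ to be the completion $\widehat{G}_{T'}$ over a whole subtree $T'$, not the single torso $H_{t_0}$, since $N_G^{3r}(S)$ can straddle several bags; and it forces the second layer of recursion on $k$: on $G'-Z$ the adhesion drops to $k-1$ because every edge of $T'$ has separator meeting $Z$, while on each $G_e$ the vertex count drops because $Z$ is removed. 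Your single induction on $|V(T)|$, with $G_0$ the lone root torso, cannot accommodate this; and without the spill-over of $Z$ into the subtrees being under the caller's control, there is no way to make the barrier consistent. A secondary point: your derivation of \cref{ANdimPartition} (``take $\scr{H}$ to be the class of graphs of diameter $0$'') is also off — the paper instead takes $\scr{H}$ to be the weighted graphs $H$ in which $V(H)$ is a $(k+1,\ell)$-centred set and applies \cref{rpathInsideCentred} to get a control function.
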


    Note that if $G$ is $(k,\scr{H})$-constructable and $\scr{H}$ is hereditary, then $G$ is $(k,0,\scr{H})$-strongly-constructable using the partition into singletons; \cref{ANdimConstructable} then follows. Separately, if $\scr{H}$ is the class of weighted graphs $H$ such that $V(H)$ is a $(k+1,\ell)$-centred set in $H$, then any weighted graph $G$ that admits a $(k,\ell)$-partition is $(k,\ell,\scr{H})$-strongly-constructable. Using \cref{rpathInsideCentred}, we find that, for any $r\geq \ell$ and any $H\in \scr{H}$, any $r$-path in $H$ has weak diameter in $H$ at most $4(k+2)r$; it follows any $1$-colouring of $H$ is a $(1,r,4(k+2)r)$-colouring. Thus, $r\mapsto 4(k+2)r$ is a dilation that is an $\ell$-almost $0$-dimensional control function for $\scr{H}$; \cref{ANdimPartition} follows. The remainder of this section is therefore dedicated to the proof of \cref{ANdimSC}. 

    \cref{ANdimSC} itself follows from the following, more technical, lemma.
    \begin{lemma}
    	\label{ANdimSCLemma}
            For every integer $k\geq 0$ and function $f:\ds{R}^+\rightarrow \ds{R}^+$, there exists a function $f_k:\ds{R}^+\rightarrow \ds{R}^+$ such that the following holds. Let $n\geq 0$ be an integer, let $\ell\geq 0$ be a real number, let $\scr{H}$ be a class of graphs for which $f$ is an $\ell$-almost $n$-dimensional control function, and let $G$ be a weighted graph with a $(k,\ell,\scr{H})$-strong-construction $(\scr{P}$,$(B_t:t\in V(T)))$. Set $n':=\max(n,1)$, let $r>0$ be a real number with $r\geq \ell$, let $q\in V(T)$, and let $S^{\scr{P}}\subseteq B_q$ be of size at most $k$. Set $S:=\bigcup_{P\in S^{\scr{P}}} P$, $Z^{\scr{P}}:=\Parts_{\scr{P}}(N_G^{3r}(S))$, and let $c_Z$ be an $(n'+1)$-colouring of $Z:=\bigcup_{P\in Z^{\scr{P}}} P$ with colours $\{1,\dots,n'+1\}$. Then $c_Z$ can be extended to an $(n'+1,r,f_k(r))$-colouring $c$ of $G$ with colours $\{1,\dots,n'+1\}$. Further, if $f$ is a dilation, then so is $f_k$.
    \end{lemma}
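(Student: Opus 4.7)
The plan is to proceed by strong induction on $|V(T)|$. Root $T$ at $q$, let $t_1, \dots, t_a$ be the children of $q$ in the rooted tree, and for each $i$ let $T_i$ be the subtree rooted at $t_i$. Take $G_0 := \WT{G}{B_q}$ (the weighted torso at $q$) and $G_i := \widehat{G}_{T_i}$ for $i \geq 1$. Each $G_i$ is tight in $G$ (by the discussion preceding the statement), the separator $S_i := V(G_0) \cap V(G_i) = \bigcup_{P \in B_q \cap B_{t_i}} P$ is $(k, \ell)$-centred in $G$ (at most $k$ parts, each of radius $\leq \ell$), and $(\scr{P}|_{V(G_i)}, (B_t : t \in V(T_i)))$ is a $(k, \ell, \scr{H})$-strong-construction of $G_i$ on the strictly smaller tree $T_i$, setting up the induction.

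Set $\ell' := (k+1)(6r + 2\ell)$ and $r' := 2r + 2\ell'$, matching the parameter recipe in \cref{boundComponentsDiameter}. The first step is to construct a colouring $c_0$ of $G_0$ that extends $c_Z$ on $V_Z := V(G_0) \cap Z = \bigcup_{P \in B_q \cap Z^{\scr{P}}} P$. By the strong-construction property applied with $\scr{P}_q := B_q \setminus Z^{\scr{P}}$, the induced subgraph $G_0 - V_Z$ lies in $\scr{H}$, so (since $r' \geq r \geq \ell$) $f$ provides an $(n+1, r', f(r'))$-colouring of $G_0 - V_Z$. Since each part in $B_q \cap Z^{\scr{P}}$ meets $N_G^{3r}(S)$ and has radius $\leq \ell$, every vertex of $V_Z$ lies within $3r + 3\ell$ of the at most $k$ centres of the parts of $S^{\scr{P}}$, so $V_Z$ is $(k, 3r + 3\ell)$-centred in $G_0$. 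Invoking \cref{colouringMinusCentredSet} to combine the $f$-colouring with $c_Z|_{V_Z}$ then yields an $(n'+1, r', D_0)$-colouring $c_0$ of $G_0$ extending $c_Z|_{V_Z}$, where $D_0 := (k+1)(f(r') + 4r' + 6r + 6\ell)$.

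Next, for each $i \in \{1, \dots, a\}$, I would construct a $(n'+1)$-colouring $c_Z^i$ of $Z_i := \bigcup_{P \in \Parts_{\scr{P}|_{V(G_i)}}(N_{G_i}^{3r}(S_i))} P$ that agrees with $c_0$ on $S_i$, exhibits an $(S_i, r)$-barrier in $G_i$, and is consistent with $c_Z$ wherever the two overlap. Applying the inductive hypothesis to $(G_i, t_i, B_q \cap B_{t_i}, c_Z^i)$ produces an $(n'+1, r, f_k(r))$-colouring $c_i$ of $G_i$ extending $c_Z^i$; since barriers and the values on $S_i$ are preserved under extension, each $c_i$ satisfies the hypotheses of \cref{boundComponentsDiameter}. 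Applying that lemma to $G_0, G_1, \dots, G_a$ with $m = n'+1$, $d = D_0$, and $D = f_k(r) := D_0 + 2r'$ then produces the desired $(n'+1, r, f_k(r))$-colouring $c := \bigcup_{i=0}^a c_i$ of $G$. The hypothesis $r \geq \ell$ bounds $r' \leq (16k+18)r$ and $6\ell \leq 6r$, so $f_k(r) \leq (k+1)(f(r') + 4r' + 12r) + 2r'$ depends only on $k$, $r$, and $f$, and is a dilation whenever $f$ is.

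The main obstacle is the construction of $c_Z^i$ compatibly with the pre-existing values of $c_Z$ outside $V(G_0)$. The saving observation is that any part in $Z^{\scr{P}}$ not contained in $B_q$ lies in some $V(G_i) \setminus V(G_0)$, and any shortest path from such a part back to $S$ must cross $S_i$; hence the part lies within $3r$ of $S_i$ in $G_i$ (using tightness of $G_i$ in $G$) and so automatically belongs to $Z_i$. One then uses the freedom in the barrier's colour choices to absorb $c_Z$'s values on these parts into the barrier pattern of $c_Z^i$, while the agreement with $c_0$ on $S_i$ is automatic since $c_0$ itself extends $c_Z$ on $V_Z \supseteq S_i \cap V_Z$.
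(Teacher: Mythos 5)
Your proposal takes a genuinely different decomposition from the paper — induction on $|V(T)|$ rooting at $q$ with a single torso as $G_0$ — but it has a gap that the paper's more elaborate choice of decomposition is specifically engineered to avoid.

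The problem is the construction of $c_Z^i$. You need this colouring of $Z_i$ to simultaneously agree with $c_0$ on $S_i$, exhibit an $(S_i,r)$-barrier, and agree with the prescribed $c_Z$ on $Z\cap Z_i$. The barrier forces the annuli $N_{G_i}^{2r}(S_i)\setminus N_{G_i}^{r}(S_i)$ and $N_{G_i}^{3r}(S_i)\setminus N_{G_i}^{2r}(S_i)$ to each be monochromatic. But $Z$ can reach well into $G_i$: since $S_i$ and $S$ both live in $V(G_0)=\bigcup_{P\in B_q}P$ and can intersect or be close, a vertex of $G_i$ at $G_i$-distance between $r$ and $3r$ from $S_i$ can easily also lie within $3r$ of $S$, and hence belong to $Z$. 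Since $c_Z$ is an arbitrary colouring, it may colour these annuli with many colours, making the barrier impossible to reconcile with $c_Z$. Your "saving observation" that any $Z$-part inside $V(G_i)\setminus V(G_0)$ is automatically within $3r$ of $S_i$ does not resolve this — it only confirms that $Z\cap V(G_i)\subseteq Z_i$, which is precisely what creates the conflict. There is no freedom in "the barrier's colour choices" to absorb an arbitrary $c_Z$: choosing $\alpha,\beta$ cannot repair a non-monochromatic annulus.

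The paper sidesteps this by taking the central piece to be $G':=\bigcup_{t\in V(T')}\WT{G}{\bigcup_{P\in B_t}P}$ where $T'$ is the \emph{entire} subtree of bags meeting $Z^{\scr{P}}$ (not just $\{q\}$). Then each peripheral piece $G_e$ satisfies $Z\cap V(G_e)\subseteq S_e$, so the inductive call on $G_e$ is constrained by $c_Z$ only on its separator, where it is consistent by construction, and the barrier can be built freely. This same choice is what powers the paper's induction: every edge of $T'$ has an adhesion set meeting $Z^{\scr{P}}$, so deleting $Z$ drops the adhesion of the decomposition of $G'-Z$ to $k-1$, and the paper recurses on $k$ (with $|V(G)|$ as secondary measure) rather than on $|V(T)|$. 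Your single-parameter induction on $|V(T)|$ avoids the need for this reduction, but without the enlarged central piece you have no mechanism to discharge $c_Z$ on the recursive pieces, and the argument does not go through as written.
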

    	
    \begin{proof}
            Let $f_0:=f$, and for every integer $k\geq 1$ and real number $r>0$, let:
            \begin{align*}
                g_k'(r)&:=8(k+1)r,\\
                g_k^*(r)&:=2g_k'(r)+2r,\\
                f_k^*(r)&:=f_{k-1}(g_k^*(r)),\\
                f_k^{\#}(r)&:=(k+1)(f_k^*(r)+4g_k^*(r)+12r),\text{ and}\\
                f_k(r)&:=f_k^{\#}(r)+2g_k^*(r).
            \end{align*}
            Observe that if $f$ is a dilation, then for every integer $k\geq 0$, $g_k'$, $g_k^*$, $f_k^*$, $f_k^{\#}$, and $f_i$ are all dilations. Also, note that for every integer $k\geq 0$ and real number $r>0$ with $r\geq \ell$, $g_k^*(r)\geq r\geq \ell$.

            Now, let $r$ be defined as in the statement of \cref{ANdimSCLemma}; we need to show that $G$ admits an $(n'+1,r,f_k(r))$-colouring with colours $\{1,\dots,n'+1\}$ that extends $c_Z$. We do this via induction, primarily on $k$ and secondarily on $|V(G)|$.

            The base case occurs when $k=0$; in this case, for each connected component $C$ of $G$, $V(C)\subseteq \bigcup_{P\in B_t}P$ for some $t\in V(T)$. Furthermore, for this $t$, $C$ is isometric in $\WT{G}{\bigcup_{P\in B_t}P}$. Since $\WT{G}{\bigcup_{P\in B_t}P} \in \scr{H}$ and $r\geq \ell$, $\WT{G}{\bigcup_{P\in B_t}P}$ admits an $(n'+1,r,f(r))$-colouring, which we may assume is with colours $\{1,\dots,n'+1\}$; the restriction of this colouring to $V(C)$ then gives an $(n'+1,r,f(r))$-colouring of $C$ with colours $\{1,\dots,n'+1\}$. As this holds for every connected component of $G$, taking the union of these colourings gives an $(n'+1,r,f(r))$-colouring of $G$ with colours $\{1,\dots,n'+1\}$. Since $f(r)=f_0(r)$ and $Z=\emptyset$ as $|S|=0$, this is the desired colouring. So we may now assume that $k\geq 1$, and that the lemma holds for all smaller values of $k$ and when $k$ is the same but $|V(G)|$ is smaller.
            
            We begin by making a few assumptions. Notice that $(\bigcup_{P\in B_t}P : t\in V(T))$ is a tree-decomposition of $G$; let $\widehat{G}$ be the corresponding completion of $G$. We may assume that $G=\widehat{G}$. Otherwise, observe that $\scr{P}$ is still a partition of $\widehat{G}$, and that $(B_t:t\in V(T))$ is still a tree-decomposition for $\widehat{G}/\scr{P}$ of adhesion at most $k$, since the extra edges added to $G/\scr{P}$ to make $\widehat{G}/\scr{P}$ only go between parts that share a bag. Furthermore, note that edges of $G$ have weight in $\widehat{G}$ no larger than their weight in $G$; since no vertices or edges are deleted going from $G$ to $\widehat{G}$, this means for any $P\in \scr{P}$, $\rad(\widehat{G}[P])\leq \rad(G[P])\leq \ell$. Thus, $\scr{P}$ is also an $\ell$-shallow partition for $\widehat{G}$. Additionally, observe that $(\bigcup_{P\in B_t}P : t\in V(T))$ is still a tree-decomposition of $\widehat{G}$, and that the completion of $\widehat{G}$ with respect to $(\bigcup_{P\in B_t}P : t\in V(T))$ is still $\widehat{G}$. Also, for each $t\in V(T)$, observe that $\WT{\widehat{G}}{\bigcup_{P\in B_t}P}=\widehat{G}[\bigcup_{P\in B_t}P]=\WT{G}{\bigcup_{P\in B_t}P}\in \scr{H}$; thus for each $\scr{P}_t\subseteq B_t$, $\WT{\widehat{G}}{\bigcup_{P\in B_t}P}[\bigcup_{P\in \scr{P}_t}P]=\widehat{G}[\bigcup_{P\in \scr{P}_t}P]=\WT{G}{\bigcup_{P\in B_t}P}[\bigcup_{P\in \scr{P}_t}P]\in \scr{H}$. So $(\scr{P},(B_t:t\in V(T)))$ is still a $(k,\ell,\scr{H})$-strong-construction for $\widehat{G}$. Finally, as observed when we first defined the completion, we have $\dist_G=\dist_{\widehat{G}}$, hence $N_{\widehat{G}}^{3r}(S)=N_{G}^{3r}(S)$, and any $(n'+1,r,f_k(r))$-colouring of $\widehat{G}$ is an $(n'+1,r,f_k(r))$-colouring of $G$. Thus, we can proceed by setting $G:=\widehat{G}$.

            We may also assume that $S^{\scr{P}}$ is nonempty. Otherwise, if $G$ is empty, we are clearly done, and if $G$ is nonempty, pick any part $P'$ of $\scr{P}$ and pick some vertex $q'\in V(T)$ with $P'\in B_{q'}$. Let $S'^{\scr{P}}:=\{P\}$, let $S':=\bigcup_{P\in S'^{\scr{P}}} P=P'$, let ${Z^{\scr{P}}}'$ be the set of parts that intersect $N_G^{3r}(S')$, and let $c_{Z'}$ be an arbitrary $(n'+1)$-colouring of $Z':=\bigcup_{P\in {Z^{\scr{P}}}'} V(P)$ with colours $\{1,\dots,n'+1\}$. Observe that $Z$ must be empty, thus any colouring that extends $c_{Z'}$ also extends $c_{Z}$. Therefore, since $|S'^{\scr{P}}|=1\leq k$, we may proceed by setting $S^{\scr{P}}:=S'^{\scr{P}}$, $S:=S'$, $Z^{\scr{P}}:=Z'^{\scr{P}}$, $Z:=Z'$ and $c_{Z}:=c_{Z'}$. Observe also that since parts are nonempty, $S^{\scr{P}}$ being nonempty implies that $S$ and hence $Z$ are nonempty.

            Henceforth, we can now assume that $S^{\scr{P}}$, and consequently $Z$, are nonempty, and that $G=\widehat{G}$. We use the latter assumption implicitly throughout the remainder of the proof. Note that this gives, as mentioned when we argued that we could take $G=\widehat{G}$, that for each $t\in V(T)$ and each $\scr{P}_t\subseteq B_t$, $G[\bigcup_{P\in \scr{P}_t}P]\in \scr{H}$.

            For each $e=tt'\in E(T)$, let $S_e^{\scr{P}}:=B_t\cap B_{t'}$ and let $S_e:=\bigcup_{P\in S_e^{\scr{P}}}P$. Note that $|S_e^{\scr{P}}|\leq k$; consequently, $S_e$ is a $(k,r)$-centred set in $G$, as $\ell\leq r$.

            For each $P\in S^{\scr{P}}$, let $T_P$ be the subgraph of $T$ induced by the vertices $t\in V(T)$ such that $B_t\cap \Parts_{\scr{P}}(N_G^{3r}(P))\neq \emptyset$. Since $P$ induces a nonempty connected subgraph of $G$, $N_G^{3r}(P)$ also induces a nonempty connected subgraph of $G$, and thus $\Parts_{\scr{P}}(N_G^{3r}(P))$ induces a nonempty connected subgraph of $G/\scr{P}$. Hence, $T_P$ is nonempty and connected, and for each $tt'\in E(T_P)$, $B_t\cap B_{t'}\cap \Parts_{\scr{P}}(N_G^{3r}(P))\neq \emptyset$. Additionally, $T_P$ contains $q$ as a vertex, as $P\in S^{\scr{P}}\subseteq B_q$.
        
            Let $T':=\bigcup_{P\in S^{\scr{P}}}T_P$. Notice that $T'$ is connected, as each $T_P$, $P\in S^{\scr{P}}$, is connected and contains $q$, and nonempty, as $|S^{\scr{P}}|>0$. Further, notice that $t\in V(T)$ is in $V(T')$ if and only if $B_t\cap Z^{\scr{P}}\neq \emptyset$, and that $B_t\cap B_{t'}\cap Z^\scr{P}\neq \emptyset$ for any $tt'\in E(T')$. Let $G':=\bigcup_{t\in V(T')}G[\bigcup_{P\in B_t} P]$. Since $T'$ is a subtree of $T$, by a prior observation we made when we first defined the completion, $G'$ is isometric in $G$. Also, notice that for any part $P\in \scr{P}$, $G[P]$ is either a subgraph of $G'$, or $P$ is disjoint from $V(G')$; let $\scr{P}':=\Parts_{\scr{P}}(V(G'))$, we thus have that $\scr{P}'$ is an $\ell$-shallow partition for $G'$. Finally, notice that $Z\subseteq V(G')$.
            
            Next, let $E'$ denote the set of edges between $T'$ and $T-V(T')$. For each $e\in E'$, let $T_e$ be the connected component of $T-V(T')$ incident to $e$; note that $e$ is the only edge between $T_e$ and $T-V(T_e)$. Thus, $T_e$ is disjoint from $T_{e'}$, $e'\in E'\setminus \{e\}$, and there is a unique vertex $q_e\in V(T_e)$ adjacent to $T-V(T_e)$; in particular $q_e$ is adjacent to $T'$ and $e$ is incident with $q_e$. Additionally, notice that $V(T')\cup \bigcup_{e\in E'}V(T_e)=V(T)$; in particular, for each $e\in E'$, $V(T)\setminus V(T_e)=V(T')\cup \bigcup_{e'\in E'\setminus \{e\}}V(T_{e'})$. Finally, notice that because $V(T_e)\subseteq V(T)\setminus V(T')$, for each $t\in V(T_e)$, $B_t\cap Z^\scr{P}=\emptyset$.
        
            Now, for each $e\in E'$, let $G_e:=\bigcup_{t\in V(T_e)}G[\bigcup_{P\in B_t}P]$, and let $\widetilde{G}_e:=\bigcup_{t\in V(T)\setminus V(T_e)}G[\bigcup_{P\in B_t}P]$. Since the only edge between $T_e$ and $T-V(T_e)$ was $e$, whose endpoints are $q_e$ and some vertex of $T'$, note that $V(G_e\cap \widetilde{G}_e)=S_e\subseteq V(G')$ and that $S_e^{\scr{P}}\subseteq B_{q_e}$. Additionally, using the same reasoning as with $G'$, notice that $G_e$ is an isometric subgraph of $G$, and that $\scr{P}_e:=\Parts_{\scr{P}}(V(G_e))$ is an $\ell$-shallow partition of $G_e$. Finally, notice that $Z\cap V(G_e)=\emptyset$, as $Z^\scr{P}\cap B_t=\emptyset$ for each $t\in V(T_e)\subseteq V(T)\setminus V(T')$.

            We now seek to apply \cref{boundComponentsDiameter} on $G$, using $G'$ as $G_0$ and $(G_e:e\in E')$ as $G_1,\dots,G_a$. We use $r$ as both $r$ and $\ell$, $n'+1$ for $m$, $\{1,\dots,n'+1\}$ for $C$, $k$ as itself, $f_k^{\#}(r)$ for $d$, and $f_k(r)$ for $D$. Notice that $\ell'$ is $g_k'(r)$ and $r'$ is $g_k^*(r)$, hence $D=f_k(r)\geq d+2r'=f_k^{\#}(r)+2g_k^*(r)$, as required.

            To begin, notice that each vertex or edge of $G$ is contained in $G[\bigcup_{P\in B_t}P]$ for some $t\in V(T)$. Since $V(T')\cup \bigcup_{e\in E'} V(T_e) = V(T)$, for each $t\in V(T)$, $G[\bigcup_{P\in B_t}P]$ is a subgraph of either $G'$, or $G_e$ for some $e\in E'$. Thus, $G=G'\cup\bigcup_{e\in E'}G_e$, and we have satisfied \cref{boundComponentsDiameter}~(a).

            Next, for each $e\in E'$, recall that $S_e=V(G_e\cap \widetilde{G}_e)\subseteq V(G')$ and that $S_e$ is a $(k,r)$-centred set in $G$. Since $V(T)\setminus V(T_e)=V(T')\cup \bigcup_{e'\in E'\setminus \{e\}}V(T_{e'})$, we have $\widetilde{G}_e=G'\cup\bigcup_{e'\in E'\setminus \{e\}}G_{e'}$. Thus, the $\widetilde{G}_e$ [resp.\ the $S_e$], $e\in E'$, are the $\widetilde{G}_i$ [resp.\ the $S_i$], $i\in \{1,\dots,a\}$, in the statement of \cref{boundComponentsDiameter}, so we have satisfied \cref{boundComponentsDiameter}~(b).

            Now, consider \cref{boundComponentsDiameter}~(c). Notice that $\scr{P}'\setminus Z^{\scr{P}}$ is an $\ell$-shallow partition of $G'-Z$, and that $(B_t\setminus Z^{\scr{P}}:t\in V(T'))$ is tree-decomposition for $(G'-Z)/\scr{P}'$; since $B_t\cap B_{t'}\cap Z^{\scr{P}}\neq \emptyset$ for each $tt'\in E(T')$, we find that $(B_t\setminus Z^{\scr{P}}:t\in V(T'))$ has adhesion at most $k-1$. Also, notice that $(\bigcup_{P\in B_t\setminus Z^{\scr{P}}}P:t\in V(T'))$ is a tree-decomposition of $G'-Z$; fix $t\in V(T')$ and consider $\WT{(G'-Z)}{\bigcup_{P\in B_t \setminus Z^{\scr{P}}}P}$. For any $t'\in V(T')$ adjacent to $t$ and any $u,v\in \bigcup_{P\in (B_t\cap B_{t'})\setminus Z^{\scr{P}}}P$, observe that $uv\in E(G)$ and hence $uv\in E(G'-Z)$. Additionally, notice that for each $uv\in E(G'-Z)$, $uv$ has weight $\dist_G(u,v)$ in both $G$ and $G'-Z$; this forces $\dist_G(u,v)=\dist_{G'-Z}(u,v)$. It follows that $\WT{(G'-Z)}{\bigcup_{P\in B_t \setminus Z^{\scr{P}}}P}=(G'-Z)[\bigcup_{P\in B_t\setminus Z^{\scr{P}}}P]=G[\bigcup_{P\in B_t\setminus Z^{\scr{P}}}P]$. Thus, for any $\scr{P}_t\subseteq B_t\setminus Z^{\scr{P}}\subseteq B_t$, $\WT{(G'-Z)}{\bigcup_{P\in B_t \setminus Z^{\scr{P}}}P}[\bigcup_{P\in \scr{P}_t}P]=G[\bigcup_{P\in \scr{P}_t}P]\in \scr{H}$. Therefore, $(\scr{P}'\setminus Z^{\scr{P}},(B_t\setminus Z^{\scr{P}}:t\in V(T'))$ is a $(k-1,\ell,\scr{H})$-strong-construction for $G'-Z$.
            
            Thus, using $g_k^*(r)\geq \ell$ as $r$ and recalling that $f_{k-1}(g_k^*(r))=f_k^*(r)$, we may apply the induction hypothesis, keeping $q$ unchanged and letting $S^{\scr{P}}$ be empty, to find an $(n'+1,g_k^*(r),f_k^*(r))$-colouring $c'$ of $G'-Z$ with colours $\{1,\dots,n'+1\}$. Now, recall that $\scr{P}'$ is an $\ell$-shallow partition for $G'$; thus, for each $P\in \scr{P}'$, $\wdiam_{G'}(P)\leq 2\ell\leq 2r$. Also, as $N_G^{3r}(S)\subseteq Z\subseteq V(G')$ notice that every $v\in N_G^{3r}(S)$ is still a distance at most $3r$ from $S$ in $G'$. Let $S^*$ be a set containing, for each $P\in S^{\scr{P}}$, exactly one central vertex for $G'[P]$. Then for any $z\in Z$, $\dist_{G'}(z,N^{3r}_G(S))\leq 2r$, $\dist_{G'}(z,S)\leq 5r$, and $\dist_{G'}(z,S^*)\leq 6r$. Since $|S^{\scr{P}}|\leq k$, $|S^*|\leq k$, and thus $Z$ is $(k,6r)$-centred set in $G'$. Since $(k+1)(f_k^*(r)+4g_k^*(r)+12r)=f_k^{\#}(r)$, we thus have that the colouring $c'':=c'\cup c_Z$ of $G'$ is an $(n'+1,g_k^*(r),f_k^{\#}(r))$-colouring with colours $\{1,\dots,n'+1\}$ via \cref{colouringMinusCentredSet}. So \cref{boundComponentsDiameter}~(c) is satisfied with $c''$ as $c_0$. Finally, note that $c''$ also extends $c_Z$, by definition.
            
            Now, for each $e\in E'$, define $c_{S_e}:=c''\big|_{V(S_e)}$. Let $Z_e^{\scr{P}}:=\Parts_{\scr{P}}(N_{G_e}^{3r}(S_e))$ and $Z_e:=\bigcup_{P\in Z_e^{\scr{P}}} P$, we can find a colouring $c_{Z_e}$ of $Z_e$ with colours $\{1,\dots,n'+1\}$ that extends $c_{S_e}$ and has an $(S_e,r)$-barrier in $G_e$. Recall that $\scr{P}_e$ is an $\ell$-shallow partition of $G_e$, and notice that $(B_t:t\in V(T_e))$ is a tree-decomposition for $G_e/\scr{P}_e$ of adhesion at most $k$, and that $(\bigcup_{P\in B_t}P:t\in V(T_e))$ is a tree-decomposition for $G_e$. For any $t\in V(T_e)$, a similar argument to the one used for the weighted torsos of $G'-Z$ shows that $\WT{G_e}{\bigcup_{P\in B_t}P}=G_e[\bigcup_{P\in B_t}P]=G[\bigcup_{P\in B_t}P]$; thus, for any $\scr{P}_t\subseteq B_t$, $\WT{G_e}{\bigcup_{P\in B_t}P}[\bigcup_{P\in \scr{P}_t}P]=G[\bigcup_{P\in \scr{P}_t}P]\in \scr{H}$. It follows that $(\scr{P}_e, (B_t:t\in V(T_e)))$ is a $(k,\ell,\scr{H})$-strong-construction for $G_e$. Finally, note that $|V(G_e)|\leq |V(G)-Z|<|V(G)|$, as $Z\cap V(G_e)=\emptyset$ and $Z$ is nonempty. Hence, we can apply induction on $G_e$, using $r$ as itself, $q_e$ as $q$, $S_e^{\scr{P}}\subseteq B_{q_e}$ as $S^{\scr{P}}$, and $c_{Z_e}$ as $c_Z$. This allows us to extend $c_{Z_e}$ to an $(n'+1,r,f_k(r))$-colouring $c_e$ of $G_e$ with colours $\{1,\dots,n'+1\}$. Note that since $c_e$ extends $c_{Z_e}$, $c_e$ has an $(S_e,r)$-barrier in $G_e$, and that $c_e=c''$ on $S_i$, by definition of $c_{S_e}$. Hence, \cref{boundComponentsDiameter}~(d) is satisfied with the various $c_e$, $e\in E'$, acting as $c_1,\dots,c_a$.

            Thus, we can now apply \cref{boundComponentsDiameter} with the parameters specified earlier. This gives us that $c:=c''\cup\bigcup_{e\in E'}c_e$ is an $(n'+1,r,f_k(r))$-colouring with colours $\{1,\dots,n'+1\}$. Since $c$ extends $c''$, which extends $c_Z$, $c$ also extends $c_Z$. Thus, $c$ is the desired colouring.
    \end{proof}

    \section{Colourings on Proper Minor-Closed Classes}

    A landmark result by \citet*{Robertson2003} known as the ``Graph Minor Structure Theorem" gives a structural description of $H$-minor free graphs. We now build towards stating this result.
    
    The \defn{Euler genus} of a surface with $h$ handles and $c$ cross-caps is $2h+c$. The \defn{Euler genus} of a graph $G$ is the minimum Euler genus of a surface $\Sigma$ such that $G$ can be embedded into $\Sigma$ without crossings.

    Given a graph $G_0$ embedded in some surface $\Sigma$ without crossings, a closed disc $D$ in $\Sigma$ is said to be \defn{$G_0$-clean} if the interior of $D$ does not intersect the embedding of $G_0$, and the boundary of $D$ intersects the embedding of $G_0$ only at vertices of $G_0$. Observe that $D$ must sit inside a face of $G_0$; if the vertices of $G_0$ that $D$ intersects are precisely the vertices of this face, we say that $D$ is \defn{snug} in $G_0$. There is a natural cyclic ordering of the vertices of $G_0$ that $D$ intersects by following the boundary of $D$; let $v_1, \dots, v_n$ denote these vertices in this order. Observe that if $D$ is snug, this ordering is the same as the cyclic ordering of the corresponding face. A \defn{$D$-vortex} of $G_0$ is then a graph $H$ such that $V(G_0\cap H)=\{v_1,\dots,v_n\}$. If $P_n$ denotes the path on $n$ vertices, then a \defn{vortex-decomposition} of a $D$-vortex $H$ of $G_0$ is a $P_n$-decomposition $(B_1,\dots, B_n)$ of $H$ such that $v_i\in B_i$ for each $i\in \{1,\dots,n\}$. The \defn{width} of a vortex-decomposition is the width of the $P_n$-decomposition, and the \defn{width} of a $D$-vortex $H$ of $G_0$ is the minimum width of a vortex-decomposition of $H$. Note that the treewidth of $H$ is at most the width of $H$.

    For integers $g,p,k,a\geq 0$, a graph $G$ is said to be \defn{$(g,p,k,a)$-almost embeddable} if there exists a set $A\subseteq V(G)$ of size at most $a$ and subgraphs $G_0,\dots,G_s$ of $G$ for some $s\in \{0,\dots,p\}$ such that:

    \begin{enumerate}[label=(\alph*)]
        \item $G-A=\bigcup_{i=0}^s G_i$;
        \item $G_1,\dots,G_s$ are pairwise disjoint;
        \item $G_0$ is embedded into a surface $\Sigma$ of Euler genus at most $g$;
        \item there exist pairwise disjoint $G_0$-clean discs $D_1,\dots,D_s$ in $\Sigma$; and
        \item $G_i$ is a $D_i$-vortex of $G_0$ of width at most $k$ for each $i\in \{1,\dots,s\}$.
    \end{enumerate}

    Additionally, if all the discs $D_1,\dots,D_s$ are snug in $G_0$, we say that $G$ is \defn{snugly $(g,p,k,a)$-almost embeddable}.

    Refer to the set $A$ as the \defn{apex vertices} of $G$, the subgraph $G_0$ as the \defn{embedded subgraph} of $G$, the $G_0$-clean closed discs $D_1, \dots, D_s$ as the \defn{discs} of $G$, and the subgraphs $G_1, \dots, G_s$ as the \defn{vortices} of $G$; for each $i\in \{1,\dots,s\}$, $G_i$ is the \defn{vortex} at $D_i$. Also, call $H:=\bigcup_{i=1}^s G_i$ the \defn{vortex-union subgraph} of $G$. Next, for $i\in \{1,\dots,s\}$, refer to the vertices $S_i$ of $G_0\cap G_i$, ordered by the natural cyclic ordering from the boundary of $D_i$, as the \defn{boundary vertices} of $D_i$. The set $S:=\bigcup_{i=1}^s S_i$ is referred to as the \defn{boundary} of $G$.
    
    We take the chance to note some properties of snugly $(g,p,k,a)$-almost embeddable graphs. For each $i\in \{1,\dots,s\}$, let $(B_{i,1},\dots,B_{i,n_i})$ be a vortex-decomposition of $G_i$ of width at most $k$, and let $S_i=v_{i,1},\dots,v_{i,n_i}$ be the cyclic ordering of the boundary vertices of $D_i$; as $D_i$ is snug $v_{i,1},\dots,v_{i,n_i}$ is precisely the vertex set of the face $F_i$ that $D_i$ sits inside, in the cyclic order induced by $F_i$. Thus, for any interval $a,\dots,b$ of the path $P_{n_i}$, $v_{i,a},\dots,v_{i,b}$ induces a connected subgraph of $G_0[S_i]$. It follows that $(B_{i,j}:v_{i,j}\in S_i)$ is a rooted $G_0[S_i]$-decomposition of $G_i$ of width at most $k$. As the vortices (including the boundary vertices) as pairwise disjoint, we find that $(B_{i,j}:v_{i,j}\in S)$ is a rooted $G_0[S]$-decomposition of $H$ of width at most $k$. Additionally, since for each $i\in \{1,\dots,s\}$, $S_i$ is the vertex set of $F_i$, $S$ is the vertex set of the union of the faces $F_1,\dots,F_s$. Finally, observe that $(G_0,H)$ forms a separation of $G-A$ with separator $V(G_0\cap H)=S$.
    
    We can now state the Graph Minor Structure Theorem of \citet*{Robertson2003}.

    \begin{theorem}[Graph Minor Structure Theorem~\citep{Robertson2003}]
        \label{GMST}
        For every graph $H$, there exists an integer $k\geq 0$ such that every $H$-minor-free graph $G$ admits a tree-decomposition of adhesion at most $k$ such that every torso is $(k,k,k,k)$-almost embeddable.
    \end{theorem}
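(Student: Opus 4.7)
The plan is to treat this as a quoted result from the Robertson--Seymour Graph Minors project, whose proof spans roughly twenty papers and is the culmination of decades of structural graph theory. A genuinely self-contained proof is well outside the scope of this paper, and the author correctly invokes it as a black box; what follows is only a sketch of the overall strategy one would have to execute.

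The natural first step is to pass to tangle-theoretic language. A \emph{tangle} of order $k$ in $G$ is a consistent orientation of all separations of order less than $k$, intuitively choosing a ``big side'' of each small separation. One first proves a Tangle Tree Theorem: every graph admits a canonical tree-decomposition of adhesion bounded in terms of the chosen tangle order, such that every maximal tangle of that order ``lives in'' a unique torso. This immediately supplies the tree-decomposition of bounded adhesion required by the statement; it remains to control the torsos.

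The heart of the proof is then a local structure theorem: if a torso contains a tangle of order sufficiently large (as a function of $|V(H)|$), then either the tangle is ``controlled'' by an almost-embedded subgraph in the $(g,p,k,a)$-sense (yielding the desired structure), or one can build $H$ as a minor, contradicting $H$-minor-freeness. The extraction of $H$ uses the Excluded Grid Theorem to find a huge wall governed by the tangle, the Flat Wall Theorem to reduce to a flat wall plus a bounded apex set, and then a lengthy case analysis that converts disjoint paths hitting the wall into either additional handles/crosscaps (bounded genus), additional vortices (bounded number and width), additional apex vertices, or finally an $H$-minor once all these resources are exhausted. Torsos whose maximal tangles are of small order are handled directly because they then have bounded treewidth, hence are trivially $(k,k,k,k)$-almost embeddable by taking the embedded subgraph to be the whole torso on a sphere (or by refining the tree-decomposition further).

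The main obstacle, by an enormous margin, is the local structure theorem and in particular the extraction step that trades unused ``crossings'' near a flat wall for an $H$-minor; this is what occupies the bulk of the Graph Minors series. For our purposes \cref{GMST} is used only as a hypothesis supplying a tree-decomposition whose torsos are almost-embeddable, and subsequent sections combine this with \cref{ANdimSCLemma} and the machinery of Sections~2--3 to deduce the colouring results.
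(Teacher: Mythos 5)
The paper does not prove this theorem; it cites it from Robertson and Seymour, exactly as you do. Your sketch of the underlying strategy (tangles, the tangle-tree theorem, the local structure theorem via the Excluded Grid and Flat Wall theorems) is a reasonable high-level account of the Graph Minors proof, but since the paper itself treats \cref{GMST} as a black box, there is nothing to compare beyond noting that you have correctly identified it as a quoted result used downstream in \cref{ANdimMCCWeighted}.
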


    Observe that if $G$ as above is a weighted graph, every weighted torso is also $(k,k,k,k)$-almost embeddable. Thus, if $\scr{H}$ is the class of all $(k,k,k,k)$-almost embeddable weighted graphs, $G$ is $(k,\scr{H})$-constructable. However, since induced subgraphs of $(k,k,k,k)$-almost embeddable graphs are not necessarily $(k,k,k,k)$-almost embeddable, $\scr{H}$ is not hereditary, and we consequently cannot make use of \cref{ANdimConstructable}. Instead, consider the class $\scr{H}'$ of all induced subgraphs of $(k,k,k,k)$-almost embeddable weighted graphs; note that $\scr{H}'$ is hereditary and $G$ is also $(k,\scr{H}')$-constructable. Thus, if we could bound the Assouad--Nagata dimension of $\scr{H}'$, we could apply \cref{ANdimConstructable} to get a bound on the Assouad--Nagata dimension of any $H$-minor-free class. Since proper minor-closed classes are $H$-minor-free for some graph $H$, to prove \cref{ANdimMCC}~(a) it suffices to show that $\ANdim(\scr{H}')\leq 2$. 
    
    We now focus on finding a dilation $f$ that is a $2$-dimensional control function for every weighted graph that is an induced subgraph of a $(g,p,k,a)$-almost embeddable weighted graph. We show that every such weighted graph is isometric in a snugly $(g,p,k,a)$-almost embeddable weighted graph; see \cref{snugSupergraph}. Thus, it suffices to show that $f$ is a $2$-dimensional control function for every snugly $(g,p,k,a)$-almost embeddable weighted graph $G$. We notice that if we can find a control function for $G-A=G_0\cup H$, we can then apply \cref{colouringMinusCentredSet} to find $f$ as $A$ is a $(a,0)$-centred set. So we just need to focus on the union $G_0\cup H$. Individually, $G_0$ and $H$ admit $2$-dimensional control functions, as the former has bounded Euler genus, so \cref{ANdimGenus} applies, and the latter has bounded treewidth, so \cref{ANdimTw} applies. However, taking the union creates issues, as it no longer has bounded treewidth nor Euler genus. Indeed, we have no direct way of dealing with this union; trying to colour the two halves of the separation independently fails, as by jumping back and forth between $G_0$ and $H$ we can find short paths in the union that exist in neither half individually. However, we can recognise that the problem occurs ``close" to where the two subgraphs meet, the boundary $S$ of $G$, and that ``far away" from the boundary, naively colouring as either a graph of bounded Euler genus or as a graph of bounded treewidth suffices. So we just need to deal with the part of the graph ``close" to the boundary. We formalise this idea in the following proposition.

    \begin{proposition}
        \label{colouringSeparation}
        Let $n\geq 0$ be an integer, let $f,g:\ds{R}^+\rightarrow \ds{R}^+$, let $G$ be a weighted graph, and let $(A,B)$ be a separation of $G$ with separator $S$ such that:

        \begin{enumerate}[label=(\alph*)]
            \item $f$ is an $n$-dimensional control function for both $A$ and $B$; and
            \item for any real number $\ell\geq 0$, $g$ is an $\ell$-almost $n$-dimensional control function for $G[N_G^\ell(S)]$.
        \end{enumerate}
        Then the function $r\mapsto g(f(r)+4r)+2(f(r)+4r)$ is an $n$-dimensional control function for $G$.
    \end{proposition}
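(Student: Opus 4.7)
The plan is to invoke the colouring characterisation of \cref{dimToColouring} and, for each $r>0$, build an $(n+1,r,D)$-colouring of $G$ for $D:=g(r')+2r'$ with $r':=f(r)+4r$. Concretely, I would pick $(n+1,r,f(r))$-colourings $c_A$ and $c_B$ of $A$ and $B$ using~(a), and pick an $(n+1,r',g(r'))$-colouring $c_S$ of $G[N_G^{r'}(S)]$ by invoking~(b) with $\ell=r'$. Then the combined colouring $c$ will equal $c_S$ on the ``inner'' neighbourhood $Z:=N_G^{2r}(S)$, $c_A$ on $V(A)\setminus Z$, and $c_B$ on $V(B)\setminus Z$; this is well defined because $S\subseteq Z$.

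The key structural observation, using $2r\geq r$, is that $Z$ blocks $r$-paths from crossing the separator: any two consecutive vertices of an $r$-path in $G$ that both lie outside $Z$ must be on the same side of the separation, since a connecting $G$-path of length at most $r$ cannot meet $S$ without forcing an endpoint into $N_G^r(S)\subseteq Z$. By the same reasoning, such a shortest $G$-path stays on whichever side holds both endpoints. Hence any maximal $r$-subpath of a monochromatic $r$-path in $G$ under $c$ that lies outside $Z$ is a monochromatic $r$-path in $A$ or in $B$ (under $c_A$ or $c_B$), and so has weak diameter in $G$ at most $f(r)$.

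For a monochromatic $r$-path $P$ from $u$ to $v$ under $c$, I would then handle two cases. If $P$ avoids $Z$, the previous paragraph gives $\wdiam_G(V(P))\leq f(r)\leq D$. Otherwise let $u',v'$ be the first and last vertices of $P$ in $Z$, so the prefix and suffix outside $Z$ give $\dist_G(u,u'),\dist_G(v',v)\leq f(r)+r$. Letting $w_1=u',\dots,w_m=v'$ be the subsequence of $P$ in $Z$, consecutive $w_s,w_{s+1}$ satisfy $\dist_G(w_s,w_{s+1})\leq f(r)+2r$ (one $r$-step, plus at most one $Z$-avoiding excursion of weak diameter $f(r)$). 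Since $\dist_G(w_s,S)\leq 2r$, a shortest connecting $G$-path lies within distance $(f(r)+2r)+2r=r'$ of $S$, hence inside $N_G^{r'}(S)$; so $w_1,\dots,w_m$ is a monochromatic $r'$-path in $G[N_G^{r'}(S)]$ under $c_S$, contained in one $r'$-component of weak diameter at most $g(r')$. Combining gives $\dist_G(u,v)\leq g(r')+2(f(r)+r)\leq g(r')+2r'=D$.

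The main delicacy is the choice of inner thickness $\ell_1:=2r$ for $Z$: it must be at least $r$ so that $r$-steps cannot leap across the separator between vertices outside $Z$, yet small enough that the successive $w_s$-excursions of length at most $f(r)+2r$ only reach into the outer neighbourhood $N_G^{r'}(S)$, with $r'=f(r)+4r$ matching exactly the $\ell$-parameter forced on us by~(b). Everything else is routine bookkeeping with $r$-path excursions, using structural arguments analogous to those in the proof of \cref{boundComponentsDiameter}.
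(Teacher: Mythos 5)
Your proof plan is correct and reaches the paper's bound $D = g(r') + 2r'$ with $r' = f(r)+4r$ via the same overall three-region colouring scheme ($c_A$ away from $S$ on the $A$-side, $c_B$ away from $S$ on the $B$-side, and a colouring supplied by hypothesis~(b) on a collar around $S$). The genuine variation is how the inner collar is chosen and coloured. The paper takes the thinner collar $Z := N_G^r(S)$, colours each $x\in Z$ by $c_{Z'}(\iota(x))$ where $\iota:Z\to S$ maps $x$ to a nearby vertex of $S$, and then invokes \cref{rerouting} as a black box to replace the $Z$-subsequence of a monochromatic $r$-path $P$ by a monochromatic $r'$-walk with interior in $S$. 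You instead take the thicker collar $Z := N_G^{2r}(S)$, colour it directly by the restriction of $c_{Z'}$, and argue inline that the subsequence $w_1,\dots,w_m$ of $P$ in $Z$ is itself a monochromatic $r'$-path in $G[N_G^{r'}(S)]$ — using that $\dist_G(w_s,w_{s+1})\leq f(r)+2r$ and that the connecting shortest paths stay within $2r+(f(r)+2r)=r'$ of $S$. Both routes close with the identical constant; your choice of inner thickness $2r$ is exactly what makes the bookkeeping tight, and what compensates for not pushing the $w_s$ all the way onto $S$. The trade-off is that the paper's pullback colouring lets it reuse \cref{rerouting} (which is deployed repeatedly elsewhere), whereas your version is more self-contained but effectively re-derives a special case of that lemma inline.
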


    \begin{proof}
        Fix a real number $r>0$, set $r':=f(r)+4r$ and $d':=g(r')$. We need to show that $G$ admits an $(n+1,r,d'+2r')$-colouring. Let $c_A$ and $c_B$ be $(n+1,r,f(r))$-colourings of $A,B$ respectively, set $Z:=N_G^r(S)$ and $Z':=N_G^{r'}(S)$, and let $c_{Z'}$ be an $(n+1,r',d')$-colouring of $G[Z']$. Let $\iota: Z\rightarrow S$ be the identity on $S$, and for $x\in Z\setminus S$, let $\iota(x)$ be a vertex in $S\cap N_G^r(x)$. Now, define a colouring $c$ of $G$ via, for each $x\in V(G)$:
        \[c(x):= \begin{cases} 
            c_A(x) & \text{if }x\in V(A)\setminus Z, \\
            c_B(x) & \text{if }x\in V(B)\setminus Z, \\
            c_{Z'}(\iota(x)) & \text{otherwise.}
            \end{cases}
        \]
        Observe that for any $x,y\in V(A)\setminus Z$ at distance at most $r$ in $G$ and any shortest path $Q$ from $x$ to $y$ in $G$, $Q$ cannot intersect $S$ as otherwise we would have $x,y\in N_G^r(S)=Z$. Since $Q$ induces a connected subgraph of $G$, this implies that $Q\subseteq V(A)-S$. Noting that every edge of $G$ between vertices in $V(A)-S$ is an edge of $A$, $Q$ is therefore also a path of length at most $r$ in $A$, and $\dist_A(x,y)\leq r$. Consequently, any $r$-path in $G$ whose vertices are contained in $V(A)\setminus Z$ is also an $r$-path in $A$. A symmetric argument shows that any $r$-path in $G$ whose vertices are contained in $V(B)\setminus Z$ is also an $r$-path in $B$.
        
        Now, notice that for any $x\in V(A)$ and $y\in V(B)$, if $\dist_G(x,y)\leq r$, then $x,y\in N_G^r(S)=Z$, as the shortest path between them must intersect $S$. Consider an $r$-path $P'$ in $G$ that is disjoint from $Z$; we claim that $P'$ must be contained in either $V(A)\setminus Z$ or $V(B)\setminus Z$. Otherwise, somewhere on $P'$, we would have consecutive vertices $x\in V(A)$, $y\in V(B)$; the previous observation then tells us that $x,y\in Z$, a contradiction. Therefore, for any monochromatic $r$-path $P$ from $u\in V(G)$ to $v\in V(G)$ in $G$ under $c$, any $r$-subpath $P'$ of $P-Z$ must be contained in either $V(A)\setminus Z$, or in $V(B)\setminus Z$. The observation from the previous paragraph then tells us that $P'$ is an $r$-path in either $A$, or in $B$, respectively. Since $c$ is $c_A$ on $V(A)\setminus Z$ [resp.\ $c_B$ on $V(B)\setminus Z$], $P'$ is also monochromatic in either $A$ under $c_A$, or in $B$ under $c_B$, respectively. Thus, $P'$ has weak diameter at most $f(r)$ in either $A$, or in $B$, respectively, and consequently in $G$ as well. 
        
        Therefore, we can apply \cref{rerouting} to find that there exists a $f(r)+2r+2r=r'$-walk $P''$ from $u$ to $v$ in $G$ whose interior consists of vertices of the form $\iota(x)$ with $x\in P\cap Z$. Observe that for $x\in P\cap Z$, $c(x)=c_{Z'}(\iota(x))=c_{Z'}(\iota(\iota(x)))=c(\iota(x))$, as $\iota(\iota(x))=\iota(x)$ since $\iota$ is the identity on $S$. Therefore, $P''$ is also monochromatic.
        
        Now, for any $x,y\in S$ at distance at most $r'$ in $G$ and any shortest path $Q$ from $x$ to $y$ in $G$, observe that $Q$ is contained in $N_G^{r'}(S)=Z'$. Thus, $Q$ is also a path of length at most $r'$ in $G[Z']$, and $\dist_{G[Z']}(x,y)\leq r'$. Since $\Int(P'')$ is contained in $S$, $\Int(P'')$ is also a (possibly empty) $r'$-path in $G[Z']$. Further, since $c$ is $c_{Z'}$ on $S$, as $\iota$ is the identity map on $S$, we also have that $\Int(P'')$ is monochromatic under $c_{Z'}$. Hence, $\Int(P'')$ has weak diameter at most $d'$ in both $G[Z']$ and $G$. Thus, $P''$ has weak diameter at most $d'+2r'$ in $G$; in particular, $\dist_G(u,v)\leq d'+2r'$, as desired.
    \end{proof}

    So we just need to find a dilation that is an $\ell$-almost $2$-dimensional control function for $(G-A)[N_{G-A}^\ell(S)]$. In fact, we find $\ell$-almost $1$-dimensional control function for $(G-A)[N_{G-A}^\ell(S)]$. We do this by showing $(G-A)[N_{G-A}^\ell(S)]$ admits a $(w,\ell)$-partition, across multiple steps; \cref{ANdimPartition} then gives the desired dilation. The first step is to find a ``natural" $\ell$-shallow partition of $G_0[N_{G_0}^\ell(S)]$.

    \begin{proposition}
        \label{partitionNeighbourhood}
        Let $r\geq 0$ be a real number, let $G$ be a weighted graph, and let $S\subseteq V(G)$. Then $G':=G[N_G^r(S)]$ admits an $r$-shallow partition $\scr{P}$ such that each part of $\scr{P}$ contains exactly one vertex of $S$.
    \end{proposition}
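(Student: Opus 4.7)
The plan is to build a BFS-like rooted forest anchored at $S$ inside $G'$ and take the parts of $\scr{P}$ to be the vertex sets of its trees. For each $v \in N_G^r(S) \setminus S$, I would fix a \emph{parent} $p(v)$ to be any neighbour of $v$ in $G$ satisfying $\dist_G(v,S) = w(vp(v)) + \dist_G(p(v),S)$; such a neighbour exists as the second vertex of any shortest $v$-to-$S$ path. Because edge weights are strictly positive, $\dist_G(p(v),S) < \dist_G(v,S) \leq r$, which guarantees $p(v) \in V(G')$ and $vp(v) \in E(G')$.

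Iterating $p$ from any $v \in V(G')$, the distance to $S$ strictly decreases at each step, so the iteration terminates at a unique root $s(v) \in S$. Setting $P_s := \{v \in V(G') : s(v) = s\}$ for each $s \in S$ yields a partition $\scr{P}$ of $V(G')$ in which each part contains exactly one vertex of $S$, namely its root. Restricting the parent edges to $P_s$ produces a tree inside $G'$ rooted at $s$, so $G'[P_s]$ is connected.

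For the $r$-shallowness condition, I would observe that for any $v \in P_s$ the parent chain $v, p(v), p(p(v)), \dots, s$ lies entirely inside $P_s$ by construction, uses only edges of $G'$, and has total $G$-length equal to $\dist_G(v,S) \leq r$. Hence $\dist_{G'[P_s]}(v,s) \leq r$, so $s$ is a central vertex witnessing $\rad(G'[P_s]) \leq r$. No step looks like a real obstacle; the one subtle point is that distances inside $G'[P_s]$ are not a priori equal to distances in $G$, but the tree path provides a valid upper bound that stays within the part.
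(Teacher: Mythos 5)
Your proof is correct, and while it arrives at the same conclusion, it takes a genuinely different route from the paper's. The paper uses a Voronoi-style assignment: pick an arbitrary total order on $S$, assign each $v$ to the $\preceq$-smallest nearest vertex $s \in S$, and then verify by a case analysis (using the tiebreak) that a shortest $v$-to-$s$ path in $G$ stays inside the part. Your construction instead builds a shortest-path forest: each non-$S$ vertex gets a parent along a shortest path to $S$, the parts are the vertex sets of the trees, and connectivity plus the radius bound fall out immediately from the tree structure and the telescoping identity $\dist_G(v,S)=\sum w\bigl(u\,p(u)\bigr)$ along the parent chain. The two constructions can produce different partitions in general. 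The forest approach buys you a simpler connectivity argument — no case analysis on the tiebreak ordering is needed, since $v\in P_s \Rightarrow p(v)\in P_s$ by construction — at the modest cost of having to check that the parent iteration terminates in $S$, which in a finite graph follows from the strict decrease of $\dist_G(\cdot,S)$. Both methods depend on finiteness of $G$ so that shortest paths to $S$ exist, and your handling of the subtleties (strictly positive weights guaranteeing $p(v)\in V(G')$, and that $\dist_{G'[P_s]}$ is only upper-bounded, not equal to, $\dist_G$) is careful and correct.
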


    \begin{proof}
        Let $\preceq$ be an arbitrary ordering of the vertices of $S$. For $v\in N_G^r(S)$, let $\iota(v)$ be the smallest, with respect to $\preceq$, $s\in S$ such that $\dist_G(v,S)=\dist_G(v,s)$. Then, for each $s\in S$, define $P_s:=\{v\in N_G^r(S): \iota(v)=s\}$, and finally, define $\scr{P}:=\{P_s:s\in S\}$. We argue that $\scr{P}$ is the desired partition of $G'$.

        Each vertex of $G'$ is in exactly one part of $\scr{P}$, as $\preceq$ acts as a tiebreaker. Additionally, for each $s\in S$, $s\in P_s$ as $\dist_G(s,s)=0$ and $\dist_G(s,s')>0$ for $s'\in S\setminus \{s\}$; it follows that each part is nonempty and contains exactly one vertex of $S$. So it remains only to show that each part $P_s$, $s\in S$, induces a connected subgraph of radius at most $r$; it suffices to show that for each $v\in P_s$, there exists a path from $v$ to $s$ in $G'[P_s]$ of length at most $r$. 
        
        Let $Q$ be a shortest path from $v$ to $s$ in $G$; by definition of $P_s$, $Q$ must have length $\dist_G(v,S)\leq r$. Thus, $Q\subseteq N_G^r(S)=V(G')$, as each vertex of $Q$ is at least as close to $s$ as $v$ is. Consequently, each vertex of $Q$ lies in some part of $\scr{P}$; assume, for a contradiction, that some $u\in Q$ is not in $P_s$, and instead lies in $P_{s'}$ for some $s'\in S\setminus \{s\}$. Since $Q$ is a shortest path, $\dist_G(s,v)=\dist_G(s,u)+\dist_G(u,v)$, and since $u\in P_{s'}$, $v\in P_s$, $\dist_G(s,u)\geq \dist_G(s',u)$ and $\dist_G(s',v)\geq \dist_G(s,v)$. If $\dist_G(s,u)>\dist_G(s',u)$, then $\dist_G(s,v)>\dist_G(s',u)+\dist_G(u,v)\geq \dist_G(s',v)$, a contradiction. So we must have that $\dist_G(s,u)=\dist_G(s',u)$ and $\dist_G(s,v)=\dist_G(s',u)+\dist_G(u,v)\geq \dist_G(s',v)$; this forces $\dist_G(s,v)=\dist_G(s',v)$. If $s'\prec s$, since $\dist_G(s',v)=\dist_G(s,v)=\dist_G(v,S)$, $\preceq$ would put $v$ in $P_{s'}$ over $P_s$, contradicting the fact that $v\in P_s$. By contrast, if $s'\succ s$, since $\dist_G(s,u)=\dist_G(s',u)=\dist_G(u,S)$, $u$ would have been placed in $P_s$ over $P_{s'}$, another contradiction. Thus, we must conclude that our assumption was false. This gives $Q\subseteq P_s$, so $Q$ is also a path from $v$ to $s$ in $G'[P_s]$, and is of the same length in $G'[P_s]$ as in $G$. Since $Q$ had length at most $r$ in $G$, $Q$ is therefore the desired path.
    \end{proof}

We need the following lemma from \citet*{Dujmovic2017}; see \citep{Eppstein2000} for an earlier $O(gr)$ bound.
    
    \begin{lemma}
        \label{twGenusRadius}
        For all integers $g,r\geq 0$, every unweighted graph of radius at most $r$ with Euler genus at most $g$ has treewidth at most $(2g+3)r$.
    \end{lemma}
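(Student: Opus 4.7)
I would reduce to the planar case by induction on the Euler genus $g$. Let $v_0$ be a central vertex of $G$ and let $T$ be a BFS spanning tree of $G$ rooted at $v_0$. Since $\rad(G) \leq r$, every root-to-vertex path in $T$ has at most $r + 1$ vertices, and every fundamental cycle of a non-tree edge of $G$ has at most $2r + 1$ vertices.

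For the base case $g = 0$, I would invoke the classical Robertson--Seymour-style fact that a connected planar graph admits a tree-decomposition, with respect to a BFS tree $T$ rooted at a central vertex $v_0$, in which every bag is the union of at most three root-to-vertex paths in $T$. One fixes a planar embedding with $T$ drawn without crossings, and produces a tree-decomposition from the face structure of $G \setminus T$: each face is bounded by portions of at most three distinct root-paths of $T$. Since these three root-paths share the root $v_0$, each bag has at most $3(r+1) - 2 = 3r + 1$ vertices, giving treewidth at most $3r = (2 \cdot 0 + 3)r$.

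For the inductive step, fix a minimum-genus embedding of $G$ on a surface $\Sigma$ of Euler genus $g \geq 1$. Some non-tree edge must have a non-contractible fundamental cycle $C$ in $\Sigma$ (otherwise all fundamental cycles bound disks, the embedded graph retracts in $\Sigma$ onto $T$, and $G$ would be planar, contradicting $g \geq 1$), and $|V(C)| \leq 2r + 1$. Cutting $\Sigma$ along $C$ strictly reduces its Euler genus, so every component of $G - V(C)$ has Euler genus at most $g - 1$ and radius at most $r$ (as a subgraph of $G$). Applying induction gives a tree-decomposition of $G - V(C)$ of width at most $(2g+1)r$; adjoining $V(C)$ to every bag and joining the component decompositions arbitrarily yields a tree-decomposition of $G$ of width at most $(2g+1)r + (2r + 1)$.

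The main obstacle is shaving a $+1$ to obtain exactly $(2g+3)r$. One route is to refine the short non-contractible cycle to contain $v_0$ (e.g.\ by choosing $C$ to pass through the central vertex, using a shortest non-contractible closed walk at $v_0$ rather than an arbitrary fundamental cycle), so that the vertex $v_0$ shared with every bag of the planar decomposition is not counted twice. This is not automatic in the naive argument above. The approach of \citet{Dujmovic2017} avoids the off-by-one losses by organising all $g$ genus-reducing cuts simultaneously with the planar tree-decomposition construction, producing the sharp constant in a single step rather than inductively; the earlier $O(gr)$ bound of \citet{Eppstein2000} suffers from exactly the accumulated $+1$'s that my inductive approach produces.
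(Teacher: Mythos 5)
The paper does not prove this lemma: it is taken verbatim from \citet{Dujmovic2017}, with \citet{Eppstein2000} cited for an earlier $O(gr)$ bound, so there is no in-paper argument to compare against.

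Beyond the off-by-one you already flag, your inductive step has a more serious gap: after deleting $V(C)$, the components of $G-V(C)$ need not have radius at most $r$. Your parenthetical ``as a subgraph of $G$'' measures distances in $G$, but the inductive hypothesis requires the \emph{intrinsic} radius of each component to be at most $r$. Shortest paths from any would-be centre to other vertices in a component may have passed through $V(C)$, so deleting $V(C)$ can inflate the radius of a component without bound (it is bounded only by the component's vertex count, which is not controlled by $r$). The BFS tree $T$ is also severed by deleting $V(C)$, so there is no natural candidate for a new centre or a new shallow BFS tree on the remainder. This is precisely why the arguments of \citet{Eppstein2000} and \citet{Dujmovic2017} do not recurse by deleting a single short non-contractible cycle: instead they fix the BFS layering once and for all and perform all of the genus reduction \emph{within} that fixed layering --- Eppstein by handling the layers and the genus separately, and Dujmović, Morin and Wood via the notion of layered treewidth, which is exactly what keeps the radius parameter under control and yields the sharp constant $2g+3$ in one pass. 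To rescue your sketch you would need either to show that the cycle $C$ can be chosen so that $G-V(C)$ still has a shallow BFS tree, or to abandon recursion on $g$ in favour of a layered-treewidth-style argument.
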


    We now show that the partition of $G_0[N_{G_0}^\ell(S)]$ we obtain from \cref{partitionNeighbourhood} is actually a $(w,\ell)$-partition.

    \begin{proposition}
        \label{twFacialPartition}
        Let $g\geq 0$, $p\geq 0$ be integers, let $r\geq 0$ be a real number, and let $G$ be a weighted graph embedded in a surface $\Sigma$ of Euler genus at most $g$ without crossings. Let $F_1,\dots,F_s$, $s\leq p$, be faces of $G$, let $S$ be the vertex set of the union of $F_1,\dots,F_s$, set $G':=G[N_G^r(S)]$, and let $\scr{P}$ be an $r$-shallow partition of $G'$ such that each part contains exactly one vertex in $S$. Then $\scr{P}$ is a $(2g+4p+3,r)$-partition of $G'$.
    \end{proposition}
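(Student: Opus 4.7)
The plan is to bound the treewidth of the quotient graph $G'/\scr{P}$, since $r$-shallowness of $\scr{P}$ is given by hypothesis. The strategy is to embed $G'/\scr{P}$ in a surface of controlled Euler genus together with a ``hub'' vertex adjacent to every other vertex, and then invoke \cref{twGenusRadius} with radius $1$.

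First, I would observe that $G'$ inherits an embedding in $\Sigma$ from $G$, and since each part of $\scr{P}$ induces a connected subgraph of $G'$, contracting each part in place yields an embedding of the minor $G'/\scr{P}$ in $\Sigma$, with Euler genus at most $g$. For each $i\in\{1,\dots,s\}$, the face $F_i$ of $G$ is contained in some face of $G'$, which in turn corresponds to a face $F'_i$ in the resulting embedding of $G'/\scr{P}$. Because each part of $\scr{P}$ contains exactly one vertex of $S$, and that vertex lies on the boundary of some $F_i$, I would then verify that every vertex of $G'/\scr{P}$ lies on the boundary of the corresponding $F'_i$.

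Next, let $t\leq s\leq p$ be the number of distinct faces among $F'_1,\dots,F'_s$. I would attach $t-1$ handles to $\Sigma$, each bridging two of the distinct $F'_i$'s through their interiors (so that the embedding of $G'/\scr{P}$ itself is undisturbed), producing a surface $\Sigma^*$ of Euler genus at most $g+2(t-1)\leq g+2p-2$ in which the chosen faces have merged into a single face $F^*$. I would then place a new vertex $*$ in the interior of $F^*$ and draw an edge from $*$ to every vertex of $G'/\scr{P}$: since all such vertices lie on the boundary of $F^*$ and $F^*$ is a connected $2$-manifold with boundary, these edges can be drawn as pairwise interior-disjoint arcs without crossings. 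The resulting graph $H$ embeds in $\Sigma^*$, so it has Euler genus at most $g+2p-2$ and, since $*$ is adjacent to every other vertex, radius $1$. By \cref{twGenusRadius}, $H$ has treewidth at most $2(g+2p-2)+3=2g+4p-1\leq 2g+4p+3$. Since $G'/\scr{P}$ is a subgraph of $H$, the same bound applies to its treewidth.

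The main technical step is the topological claim in the construction of $H$: that after attaching the handles, the merged face $F^*$ admits a crossing-free star from any chosen interior point to all boundary vertices of $G'/\scr{P}$. Verifying this requires a careful description of the handle attachment and a standard but somewhat delicate disjoint-arc argument inside the merged region. The rest of the argument is bookkeeping on minors, faces, and Euler genus together with a single application of \cref{twGenusRadius}.
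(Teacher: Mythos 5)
Your proof is correct and takes essentially the same approach as the paper: contract the parts to embed $G'/\scr{P}$, attach handles to merge the faces containing the $S$-vertices into one, add a universal hub vertex in that merged face, and invoke \cref{twGenusRadius} with radius $1$. The only cosmetic difference is the order of operations (you contract before attaching handles, the paper attaches handles to $G'$ first and then contracts each part to its $S$-vertex), and you track the number $t$ of distinct faces a little more carefully, giving the marginally sharper intermediate bound $2g+4p-1$; both yield the stated $2g+4p+3$.
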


    \begin{proof}
        We must show that $G/\scr{P}$ has treewidth at most $2g+4p+3$. For each $i\in \{2,\dots,s\}$, add a handle connecting the interior of $F_1$ to the interior of $F_i$; this gives an embedding of $G'$ into a surface $\Sigma'$ of Euler genus at most $g+2(\max(s-1,0))\leq g+2p$ such that all the vertices in $S$ are in a common face. Thus, by viewing $G'/\scr{P}$ as contracting each part down to the unique vertex in $S$, we can see that $G'/\scr{P}$ can be embedded in $\Sigma'$ so that every vertex lies on a common face. We can then add a new vertex to $G'/\scr{P}$ adjacent to every existing vertex within the interior of the common face to create a new unweighted graph $H$ of radius at most 1 embedded in $\Sigma'$. Thus, by \cref{twGenusRadius}, $H$ has treewidth at most $2(g+2p)+3=2g+4p+3$. Since $G'/\scr{P}\subseteq H$, $G'/\scr{P}$ also has treewidth at most $2g+4p+3$, as desired.
    \end{proof}

    Finally, we extend the $(w,\ell)$-partition of $G_0[N_{G_0}^\ell(S)]$ to a $(w',\ell)$-partition of $(G-A)[N_{G-A}^\ell(S)]$.

    \begin{proposition}
        \label{twSeparation}
        Let $t,w\geq 0$ be integers, let $r\geq 0$ be a real number, and let $G'$ be a graph that admits a separation $(G,H)$ with separator $S$ such that:
            \begin{enumerate}[label=(\alph*)]
                \item $G$ admits a $(t,r)$-partition $\scr{P}$ such that each part contains exactly one vertex in $S$; and
                \item $H$ admits a rooted $G[S]$-decomposition of width at most $w$.
            \end{enumerate}
        Then $\scr{P}':=\scr{P}\cup\bigcup_{v\in V(H)\setminus S}\{\{v\}\}$ is a $((t+1)(w+1)-1,r)$-partition of $G'$.
    \end{proposition}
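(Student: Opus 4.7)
The plan is to first verify that $\scr{P}'$ is an $r$-shallow partition of $G'$, and then to construct an explicit tree-decomposition of $G'/\scr{P}'$ of width at most $(t+1)(w+1)-1$ by combining the tree-decomposition of $G/\scr{P}$ given by (a) with the rooted $G[S]$-decomposition of $H$ given by (b).

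For the partition check, note that the parts of $\scr{P}$ lie in $V(G)$ while the singletons $\{v\}$ for $v\in V(H)\setminus S$ lie in $V(H)\setminus V(G)$, so together they partition $V(G')=V(G)\cup V(H)$. Since each $P\in\scr{P}$ contains exactly one vertex of $S$, any edge of $G'[P]$ not already in $G[P]$ would have to lie in $H$ and hence have both endpoints in $S\cap P$, which is impossible; thus $G'[P]=G[P]$, which is connected of radius at most $r$, and singletons are trivially so. Hence $\scr{P}'$ is indeed an $r$-shallow partition of $G'$.

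For the tree-decomposition, let $(B_x:x\in V(T))$ be a tree-decomposition of $G/\scr{P}$ of width at most $t$, let $(D_s:s\in S)$ be a rooted $G[S]$-decomposition of $H$ of width at most $w$, and write $s_P$ for the unique vertex of $P\in\scr{P}$ in $S$, so that $P=P_{s_P}$. For each $x\in V(T)$, define
\[ B'_x := \bigcup_{Q\in B_x}\Bigl(\{P_s : s\in D_{s_Q}\cap S\}\ \cup\ \{\{v\}:v\in D_{s_Q}\setminus S\}\Bigr). \]
Rootedness gives $s_Q\in D_{s_Q}$, so $Q=P_{s_Q}\in B'_x$ for every $Q\in B_x$, and a direct count yields $|B'_x|\leq\sum_{Q\in B_x}|D_{s_Q}|\leq(t+1)(w+1)$, which is the desired width bound.

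It remains to verify the tree-decomposition axioms. For each $\pi\in\scr{P}'$ let $u_\pi:=s_P$ if $\pi=P\in\scr{P}$ and $u_\pi:=v$ if $\pi=\{v\}$, and set $U_\pi:=\{s\in S:u_\pi\in D_s\}$. Applying the $G[S]$-decomposition axioms to $u_\pi\in V(H)$ shows that $U_\pi$ is nonempty and connected in $G[S]$; since each edge of $G[S]$ is an edge of $G$ between distinct parts, the set $\scr{U}_\pi:=\{P_s:s\in U_\pi\}$ is connected in $G/\scr{P}$. Unfolding the definition gives $\pi\in B'_x$ iff $\scr{U}_\pi\cap B_x\neq\emptyset$, and the standard lemma that the bags of a tree-decomposition meeting a nonempty connected vertex set form a subtree delivers the connectivity axiom. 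For the edge axiom, each edge of $G'/\scr{P}'$ arises either from an edge of $G$, handled by the inclusion $B_x\subseteq B'_x$ and the tree-decomposition of $G/\scr{P}$, or from an edge of $H$, handled by finding any $D_s$ containing both endpoints and then any $x$ with $P_s\in B_x$. The main subtlety I anticipate is handling edges of $H$ with both endpoints in $S$ that are not edges of $G$; it is exactly this case that forces the expansion step above and the resulting factor of $w+1$ in the width.
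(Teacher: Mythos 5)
Your proposal is correct and follows essentially the same route as the paper: it proves $r$-shallowness directly, then builds the tree-decomposition of $G'/\scr{P}'$ on the same index tree $T$ by, for each bag $B_x$ of the decomposition of $G/\scr{P}$, collecting all parts of $\scr{P}'$ meeting some $D_{s_Q}$ with $Q\in B_x$ — which is exactly the paper's $K_t=\bigcup_{s\in S_t}\Parts_{\scr{P}'}(J_s)$. The width count, the reduction of the connectivity axiom to the connectivity of $\{s\in S:u_\pi\in D_s\}$ in $G[S]$ (hence of its image in $G/\scr{P}$), and the edge case split between $E(G)$ and $E(H)$ all match the paper's argument.
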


    \begin{proof}
        It is immediate that $\scr{P}'$ is an $r$-shallow partition of $G'$, so we only need to show that the treewidth of $G'/\scr{P}'$ is at most $(t+1)(w+1)-1$.

        Let $(B_t:t\in V(T))$ be a tree-decomposition of $G/\scr{P}$ of width at most $t$, and let $(J_s:s\in S)$ be a rooted $G[S]$-decomposition of $H$ of width at most $w$. For each $t\in V(T)$, let $S_t:=\bigcup_{P\in B_t}P\cap S$; observe that $|S_t|\leq t+1$ as $(B_t:t\in V(T))$ has width at most $t$ and each $P\in B_t$ contains only one vertex in $S$. For each $t\in V(T)$, let $K_t:=\bigcup_{s\in S_t}\Parts_{\scr{P}'}(J_s)$. Note that for each $P\in B_t$, there exists $s\in P\cap S_t$; since $s\in J_s$, this gives $P=\Part_{\scr{P}'}(s)\in K_t$. Thus, $B_t\subseteq K_t$. We now argue that $(K_t:t\in V(T))$ is the desired tree-decomposition of $G'/\scr{P}'$.

        First, we show that $(K_t:t\in V(T))$ is a tree-decomposition of $G'/\scr{P}'$. Consider any $h\in V(H)$, the vertices $s\in S$ such that $h\in J_s$ induce an nonempty connected subgraph $C_h$ of $G[S]$. Let $C_h^\scr{P}=G/\scr{P}[\Parts_{\scr{P}}(V(C_h))]$ note that $C_h^\scr{P}$ must also be nonempty and connected. Thus, the vertices $t\in V(T)$ such that $B_t\cap V(C_h^\scr{P})\neq \emptyset$ induce a nonempty subtree $T_h$ of $T$. Also, observe that for each $s\in S$, $\Part_{\scr{P}}(s)\in V(C_s^\scr{P})$, as $(J_s:s\in S)$ is rooted. Now, for each $P\in \scr{P}'$, let $T'_P$ be the subgraph of $T$ induced by the vertices $t\in V(T)$ such that $P\in K_t$; we need to show that $T'_P$ is nonempty and connected. Note that there is exactly one vertex $h_P$ in $P\cap V(H)$; either the unique $s$ in $P\cap S$ if $P\in \scr{P}$, or the $h$ such that $P=\{h_P\}$ for $P\in \scr{P}'\setminus\scr{P}$. We argue that $T'_P=T_{h_P}$; since $T_{h_P}$ is nonempty and connected, this gives the desired result. Note that it suffices to show that $V(T'_P)=V(T_{h_P})$, as both subgraphs are induced.

        For a given $P\in \scr{P}'$, if $t\in V(T)$ is in $V(T'_P)$, then $P\in \Parts_{\scr{P}'}(J_s)$ for some $s\in S_t$. However, since $P\cap V(H)=\{h_P\}$ and $J_s\subseteq V(H)$, this forces $h_P\in J_s$. This gives $s\in V(C_{h_P})$, and $\Part_{\scr{P}}(s)\in V(C_{h_P}^\scr{P})$. By definition of $S_t$, $\Part_{\scr{P}}(s)\in B_t$, and thus $V(C_{h_P}^\scr{P})\cap B_t\neq \emptyset$. Therefore, $t\in V(T_{h_P})$. By contrast, for each $t\in V(T_{h_P})$, there exists $P'\in B_t\cap V(C_{h_P}^\scr{P})$; this part $P'$ must contain a $s\in V(C_{h_P})\subseteq S$. By definition of $C_{h_P}$, we have ${h_P}\in J_s$, so $P=\Part_{\scr{P}'}(h_P)\in \Parts_{\scr{P}'}(J_s)$. However, we also know that $s\in P'\cap S\subseteq S_t$, as $P'\in B_t$, so $P\in \bigcup_{s\in S_t}\Parts_{\scr{P}'}(J_s)=K_t$. Therefore, $t\in V(T'_P)$; this combined with the other direction above gives $V(T'_P)=V(T_h)$, as desired.

        To complete our proof that $(K_t:t\in V(T))$ is a tree-decomposition of $G'/\scr{P}'$, it remains only to show that for any $P_1P_2\in E(G'/\scr{P}')$, $P_1,P_2\in K_t$ for some $t\in V(T)$. We know that there is some $uv\in E(G')$ such that $u\in P_1$, $v\in P_2$, which must be an edge in either $G$ or $H$. In the former case, $P_1$, $P_2$ are both parts in $\scr{P}$ and $P_1P_2$ is also an edge of $E(G/\scr{P})$; thus, there is some $t\in V(T)$ for which $P_1,P_2\in B_t\subseteq K_t$. Otherwise, there exists $s\in S$ with $u,v\in J_s$, and thus $P_1=\Part_{\scr{P}'}(u),P_2=\Part_{\scr{P}'}(v)\in K_t$ for some $t\in V(T)$ for which $\Part_{\scr{P}}(s)\in B_t$.

        Lastly, we just need to consider the width of our tree-decomposition. Observe that for each $t\in V(T)$, $|K_t|\leq \sum_{s\in S_t}|J_s|\leq |S_t|(w+1)\leq (t+1)(w+1)$, as desired.
    \end{proof}

    All that is left is to prove that any induced subgraph of a $(g,p,k,a)$-almost embeddable weighted graph is isometric in a snugly $(g,p,k,a)$-almost-embeddable weighted graph.

    \begin{proposition}
        \label{snugSupergraph}
        Let $g,p,a,k\geq 0$ be integers, and let $H$ be an induced subgraph of a $(g,p,k,a)$-almost embeddable weighted graph. Then $H$ is an isometric subgraph of a snugly $(g,p,k,a)$-almost embeddable weighted graph.
    \end{proposition}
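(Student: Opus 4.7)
The plan is to build $G''$ from the ambient $(g,p,k,a)$-almost-embeddable graph $G'$ containing $H$ as an induced subgraph, by (i) enlarging the embedded subgraph so that every disc becomes snug, and (ii) choosing weights that make $H$ a tight subgraph. Writing the almost-embedding of $G'$ as apex set $A$, embedded subgraph $G_0$ in a surface $\Sigma$, discs $D_1,\dots,D_s$, and vortices $G_1,\dots,G_s$, I will keep $A$, the discs $D_i$, and each vortex $G_i$ together with its vortex-decomposition unchanged, so that the parameters $g$, $p$, $k$, $a$ are automatically preserved.

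For each disc $D_i$ with boundary vertices $v_{i,1},\dots,v_{i,n_i}$ in the cyclic order on $\partial D_i$, I augment $G_0$ by drawing, in the thin annular region just outside $\partial D_i$ inside the face of $G_0$ containing $D_i$, the cycle edges $v_{i,1}v_{i,2},\dots,v_{i,n_i-1}v_{i,n_i},v_{i,n_i}v_{i,1}$. This region is disjoint from $G_0$ because $D_i$ is $G_0$-clean, so no crossings are introduced and the Euler genus remains at most $g$. The new cycle bounds a face of the enlarged embedded subgraph $G_0''$ with vertex set exactly $\{v_{i,1},\dots,v_{i,n_i}\}$, which matches $\partial D_i\cap V(G_0'')$; hence each $D_i$ becomes snug. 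Since the new edges lie in $G_0''$ only and not in any vortex, the vortex decompositions and their widths are unaffected.

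For the weighting, I assign weight $\infty$ to every newly added cycle edge and to every edge of $G'$ with at least one endpoint outside $V(H)$, while keeping original weights on all edges of $G'$ between two $V(H)$-vertices. Then $V(H)\subseteq V(G')\subseteq V(G'')$ and $E(H)\subseteq E(G'')$ with matching weights, so $H\subseteq G''$ as a weighted subgraph. Moreover, since $H$ is induced in $G'$, any finite-weight path in $G''$ between two vertices of $V(H)$ is forced to use only edges of finite weight, which are exactly the edges of $H$; thus $\dist_{G''}(u,v)=\dist_H(u,v)$ for all $u,v\in V(H)$, so $H$ is tight in $G''$.

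The main obstacle is the simple-graph convention of the paper: if a desired cycle edge $v_{i,l}v_{i,l+1}$ already exists in $G_0$ but is routed far from $D_i$, adding a parallel copy is forbidden. I plan to handle this by re-embedding the existing edge through the annular region near $D_i$, that is, deleting it from its old location in $\Sigma$ and redrawing it as the required cycle edge. This preserves the underlying graph structure and edge weights, so $H\subseteq G''$ still holds even when the edge also lies in $H$, and it preserves the Euler genus, since deletion merges two faces while re-insertion splits a face, keeping $V-E+F$ invariant. Coordinating these re-routings across all discs so that the final embedding simultaneously realises the snug property for every $D_i$, and verifying that each $D_j$, $j\neq i$, remains $G_0''$-clean despite the changing face structure, constitutes the bulk of the technical work; the vortex decompositions, apex set, and underlying graph remain untouched throughout.
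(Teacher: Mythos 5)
Your proposal is essentially the paper's proof. Both constructions add a cycle through the boundary vertices of each disc (drawn in a thin annular region just outside $\partial D_i$), re-route any of those cycle edges that already exist in $G_0$, and then assign weight $\infty$ to all edges outside $E(H)$ so that $H$ becomes a tight subgraph. The only cosmetic difference is that the paper separates the argument into two tightness steps ($H$ tight in a reweighted $G$, then $G$ tight in the snug supergraph $G'$), whereas you do the reweighting in one shot; both are fine.

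Two small remarks. First, your justification for Euler-genus preservation via ``$V-E+F$ invariant'' is slightly off-key: the cleaner argument is simply that the re-routed drawing still lives in the same surface $\Sigma$ with no crossings (the new routes lie in an annular region inside the face containing $D_i$, which is disjoint from the rest of $G_0$ because $D_i$ is $G_0$-clean), so the genus bound is immediate without any Euler-formula bookkeeping. Second, you flag the coordination of re-routings across all discs and the preservation of cleanness of the other discs as ``the bulk of the technical work'' without carrying it out; the paper dispenses with this with a one-line remark that the edges can be drawn ``sufficiently tightly'' to the disc boundaries, which suffices since the discs are pairwise disjoint and each re-route stays inside its own thin annulus. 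You could close that gap with the same observation, rather than leaving it as future work.
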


    \begin{proof}

        We show that $H$ is an isometric subgraph of a $(g,p,k,a)$-almost embeddable weighted graph $G$, and that $G$ is an isometric subgraph of a snugly $(g,p,k,a)$-almost embeddable weighted graph $G'$. The result follows since being isometric is a transitive property. We start with the former statement. 
        
        By definition of $H$, $H$ is an induced subgraph of a $(g,p,k,a)$-almost embeddable weighted graph $G^{\#}$. Let $w_H$ be the weighting function of $H$, and let $w_G:E(G)\rightarrow \ds{R}^+\cup \{\infty\}$ be defined by setting $w_G(e):=w_H(e)$ for each $e\in E(H)$ and setting $w_G(e):=\infty$ otherwise. Let $G$ be the result of imbuing $\UNW(G^{\#})$ with $w_G$; observe that $G$ is still $(g,p,k,a)$-almost embeddable, and that $H\subseteq G$ as $w_H=w_G\big|_{E(H)}$. It remains to show that $H$ is isometric in $G$. 
        
        Observe that every path in $G$ is either a path in $H$, in which case it has the same length in $G$ as in $H$ as all the edges have the same weight in $G$ as $H$, or has an edge in $E(G)\setminus E(H)$ and hence has length $\infty$. Since $H\subseteq G$, it follows that $H$ is isometric in $G$, as desired. So we now tackle the latter statement.

        Let $G_0$ be the embedded subgraph of $G$, which is embedded in some surface $\Sigma$ of Euler genus at most $g$, and let $D_1,\dots,D_s$ be the discs of $G$. For each $i\in \{1,\dots,s\}$, let $V_i:=\{v_{i,1},\dots,v_{i,n_i}\}$ represent the boundary vertices of $D_i$, where $v_{i,1},\dots,v_{i,n_i}$ is ordered according to the natural cyclic ordering obtained from following the boundary of $D_i$. Note that we may assume $n_i\geq 3$ for each $i\in \{1,\dots,s\}$; otherwise add isolated vertices $v_{i,n_{i}+1},\dots,v_{i,3}$ to $G$, $G_i$ and $G_0$ along the boundary of $D_i$ between $v_{i,n_i}$ and $v_{i,1}$ (if they exist) in that order, and update the vortex-decomposition of $G_i$ by adding the bags $B_j:=v_j$ for $j\in \{n_{i+1},\dots,3\}$. $G$ is isometric in this updated graph $G^*$, so whatever $G^*$ is isometric in $G$ will be isometric in as well; thus we may proceed by setting $G:=G^*$. 
        
        Create $G'_0$ from $G_0$ by, for each pair $i\in \{1,\dots,s\}$ and $j\in \{1,\dots,n_i\}$, adding the edge $v_{i,j}v_{i,j+1}$ if it doesn't already exist. If $v_{i,j}v_{i,j+1}\in E(G)$, we give $v_{i,j}v_{i,j+1}$ the same weight in $G_0'$ as its weight in $G$, otherwise we give it weight $\infty$. We then create $G'$ from $G$ by adding all of the above edges that didn't already exist in $G$, which will all have weight $\infty$.
        
        Define an embedding of $G_0'$ in some surface of Euler genus at most $g$ as follows. Starting from the embedding of $G_0$, for each pair $i\in \{1,\dots,s\}$ and $j\in \{1,\dots,n_i\}$, we embed $v_{i,j}v_{i,j+1}$ by tracing just outside the boundary of $D_i$; if $v_{i,j}v_{i,j+1}\in E(G_0)$, this overrides the existing embedding of $v_{i,j}v_{i,j+1}$ used for $G_0$. Provided that the edges cling sufficiently tightly to the boundaries of the discs, the resulting embedding has no crossings. These edges also form a cycle around $D_i$ as $n_i\geq 3$. Thus, for each $i\in \{1,\dots,s\}$, $D_i$ is $G_0'$-clean, the vertices of $G_0'$ that $D_i$ intersects is $V_i$, and the cyclic ordering from following the boundary of $D_i$ is just the cyclic ordering of $V_i$. Additionally, $D_i$ is nested inside a face $F_i$ whose vertices are $V_i$ and whose induced cyclic ordering is the cyclic ordering of $V_i$. Thus, $D_i$ is snug, and if $G_i$ is the vortex of $G$ at $D_i$, then $G_i$ is also a $D_i$-vortex of $G_0'$, and any vortex-decomposition of $G_i$ as a $D_i$-vortex of $G_0$ is still a vortex-decomposition of $G_i$ as a $D_i$-vortex of $G_0'$. Thus, the width of $G_i$ as a $D_i$-vortex of $G_0'$ is still at most $k$.
        
        Therefore, if $A$ denotes the set of apex vertices of $G$, then $G'-A=G_0'\bigcup_{i=1}^s G_i$, where $G_1,\dots,G_s$ are pairwise disjoint, $G_0'$ is embedded onto a surface of genus at most $g$, and each $G_i$, $i\in \{1,\dots,s\}$, is a $D_i$-vortex of $G'_0$ of width at most $k+1$, where the $D_i$, $i\in \{1,\dots,s\}$ are pairwise disjoint $G_0'$-clean snug discs. Since $|A|\leq a$, this shows that $G'$ is snugly $(g,p,k,a)$-almost embeddable, as desired.
        
        So it remains only to show that $G$ is isometric in $G'$. However, since $G\subseteq G'$, each $e\in E(G)$ has the same weight in $G'$, and each $e\in E(G')\setminus E(G)=\bigcup_{i=1}^s E_i'$ has weight $\infty$, an argument identical to that we used to show that $H$ was isometric in $G$ can be used to show that $G$ is isometric in $G'$.
    \end{proof}

    We are finally in a position to show that the class of all induced subgraphs of $(g,p,k,a)$-almost embeddable weighted graphs has Assouad--Nagata dimension 2.
    
    \begin{proposition}
        \label{ANdimAlmostEmbeddable}
        Let $g,p,a,k\geq 0$ be integers. Let $\scr{G}$ be the class of all $(g,p,k,a)$-almost embeddable weighted graphs, and let $\scr{H}$ be the class of all induced subgraphs of graphs in $\scr{G}$. Then $\scr{H}$ is hereditary and $\ANdim(\scr{H})\leq 2$. 
    \end{proposition}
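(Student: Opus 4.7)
That $\scr{H}$ is hereditary is immediate: an induced subgraph of an induced subgraph lies in $\scr{H}$. For $\ANdim(\scr{H})\leq 2$, fix $H\in\scr{H}$. By \cref{snugSupergraph}, $H$ is tight in some snugly $(g,p,k,a)$-almost embeddable weighted graph $G^\dagger$; since any $(m,r,d)$-colouring of $G^\dagger$ restricts to one on the tight subgraph $H$, it suffices to produce a single dilation $2$-dimensional control function valid for every snugly $(g,p,k,a)$-almost embeddable graph, depending only on $g,p,k,a$. Accordingly let $G$ be such a graph, with apex set $A$, embedded subgraph $G_0$, vortex-union subgraph $H^\star$, and boundary $S:=V(G_0\cap H^\star)$, so that $(G_0,H^\star)$ is a separation of $G-A$ with separator $S$. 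The plan is to produce a dilation $2$-dimensional control function for $G-A$ via \cref{colouringSeparation} applied to this separation, then extend to $G$ by \cref{colouringMinusCentredSet}, using that $A$ is an $(a,0)$-centred set in $G$.

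To apply \cref{colouringSeparation} with $n=2$, we need a common dilation $2$-dimensional control function $f$ for $G_0$ and $H^\star$, and, for every $\ell\geq 0$, a dilation $h$ (independent of $\ell$) that is an $\ell$-almost $2$-dimensional control function for $(G-A)[N_{G-A}^\ell(S)]$. For $f$: the graph $G_0$ has Euler genus at most $g$, so \cref{ANdimGenus} supplies a dilation $2$-dim control function, while $H^\star$ is a disjoint union of vortices each of width and hence treewidth at most $k$, so \cref{ANdimTw} supplies a dilation $1$-dim (hence $2$-dim) control function; the pointwise maximum gives $f$. For $h$: we construct a $(w,\ell)$-partition of $(G-A)[N_{G-A}^\ell(S)]$ with $w$ depending only on $g,p,k$, then invoke \cref{ANdimPartition}.

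For the partition, apply \cref{partitionNeighbourhood} to $G_0$ and $S$ to obtain an $\ell$-shallow partition $\scr{P}_0$ of $G_0[N_{G_0}^\ell(S)]$ with exactly one vertex of $S$ per part; because $G$ is snug, $S$ is the vertex set of the at most $p$ faces into which the discs are inscribed, so \cref{twFacialPartition} upgrades $\scr{P}_0$ to a $(2g+4p+3,\ell)$-partition. Snugness also supplies a rooted $G_0[S]$-decomposition of $H^\star$ of width at most $k$, so \cref{twSeparation} yields that $\scr{P}':=\scr{P}_0\cup\{\{v\}:v\in V(H^\star)\setminus S\}$ is a $((2g+4p+4)(k+1)-1,\ell)$-partition of $G_0[N_{G_0}^\ell(S)]\cup H^\star$. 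To transfer this to $(G-A)[N_{G-A}^\ell(S)]$, observe that since there are no edges of $G-A$ between $V(G_0)\setminus S$ and $V(H^\star)\setminus S$, any shortest $(G-A)$-path from a vertex $v\in V(G_0)\setminus S$ to $S$ stays in $V(G_0)$ until its first $S$-vertex, giving $V(G_0)\cap N_{G-A}^\ell(S)=N_{G_0}^\ell(S)$ and hence $(G-A)[N_{G-A}^\ell(S)]\subseteq G_0[N_{G_0}^\ell(S)]\cup H^\star$. The subfamily $\scr{P}''\subseteq\scr{P}'$ of parts lying in $N_{G-A}^\ell(S)$ then partitions $N_{G-A}^\ell(S)$ with unchanged parts, so $\scr{P}''$ is still $\ell$-shallow and its quotient is a subgraph of $(G_0[N_{G_0}^\ell(S)]\cup H^\star)/\scr{P}'$, inheriting the treewidth bound. \cref{ANdimPartition} now delivers the desired dilation $h$.

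Combining, \cref{colouringSeparation} produces a dilation $2$-dim control function for $G-A$, and a single invocation of \cref{colouringMinusCentredSet} with $Z:=A$ (centred at itself, $\ell:=0$, $k:=a$) extends the resulting $3$-colourings to $G$ while preserving the dilation property. Since the output depends only on $g,p,k,a$, we obtain the uniform bound and thus $\ANdim(\scr{H})\leq 2$. The main obstacle is the partition transfer step: \cref{twFacialPartition} and \cref{twSeparation} naturally produce a partition of $G_0[N_{G_0}^\ell(S)]\cup H^\star$ rather than of the target $(G-A)[N_{G-A}^\ell(S)]$, and the reconciliation rests on the separator identity $V(G_0)\cap N_{G-A}^\ell(S)=N_{G_0}^\ell(S)$, which ultimately comes from the absence of $(G-A)$-edges between $V(G_0)\setminus S$ and $V(H^\star)\setminus S$.
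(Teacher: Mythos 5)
Your proposal is correct and follows the paper's proof structure essentially verbatim: pass to a snug supergraph via \cref{snugSupergraph}, split $G-A$ into the embedded part and the vortex-union part, build a $((2g+4p+4)(k+1)-1,\ell)$-partition near the boundary $S$ via \cref{partitionNeighbourhood}, \cref{twFacialPartition}, and \cref{twSeparation}, feed it to \cref{ANdimPartition} and \cref{colouringSeparation}, and finish by eliminating the apex set with \cref{colouringMinusCentredSet}. The only departure is cosmetic: you apply \cref{twSeparation} to the full union $G_0[N_{G_0}^\ell(S)]\cup H^\star$ and then restrict the resulting partition to $(G-A)[N_{G-A}^\ell(S)]$, whereas the paper restricts the vortex-union subgraph to $H[N^\ell_{H}(S)]$ first and applies \cref{twSeparation} directly to $(G-A)[N^\ell_{G-A}(S)]$; both hinge on the same separator identity you correctly isolate and justify.
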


    \begin{proof}
        The fact that $\scr{H}$ is hereditary is immediate, so we only need to show the existence of a dilation $f$ that is a $2$-dimensional control function for $\scr{H}$. By \cref{ANdimGenus}, there exists a dilation $f^{\#}_1$ that is a $2$-dimensional control function for the class of weighted graphs of Euler genus at most $g$, and by \cref{ANdimTw}, there exists a dilation $f^{\#}_2$ that is a $1$-dimensional control function for the class of weighted graphs of treewidth at most $k$. Let $f^{\#}:=\max(f^{\#}_1,f^{\#}_2)$; note that $f^{\#}$ must also be a dilation. By \cref{ANdimPartition}, there exists a dilation $f'$ such that for every real number $\ell\geq 0$, $f'$ is an $\ell$-almost $1$-dimensional control function for every weighted graph $G$ that admits a $((2g+4p+4)(k+1)-1,\ell)$-partition. Let $f^*(r):=f'(f^{\#}(r)+4r)+2(f^{\#}(r)+4r)$ for any real number $r>0$; note that $f^*$ is also a dilation as both $f^{\#}$, $f'$ are. Finally, let $f(r):=(a+1)(f^*(r)+4r)$ for any real number $r>0$; note that $f$ is also a dilation. We show that $f$ is the desired $2$-dimensional control function for $\scr{H}$.
        
        Consider any $H^{\#}\in \scr{H}$, using \cref{snugSupergraph} we know that $H^{\#}$ is an isometric subgraph of some $G\in \scr{G}$ that is snugly $(g,p,k,a)$-almost embeddable. Let $A$ be the apex vertices of $G$, let $G_0$ denote the embedded subgraph of $G$, let $H$ denote the vortex-union subgraph of $G$, and let $S$ denote the boundary of $G$. As observed when we discussed some properties of snugly almost embeddable graphs, $S$ is the vertex set of the union of the faces $F_1,\dots,F_s$ for which the discs $D_1,\dots,D_s$, $s\leq p$, of $G$ are snug in, $(G_0,H)$ is a separation of $G-A$ with separator $S$, and $H$ admits a $G_0[S]$-decomposition of width at most $k$. We seek to apply \cref{colouringSeparation} using $G-A$ as $G$, $(G_0,H)$ as $(A,B)$, $f^{\#}$ as $f$ and $f'$ as $g$. \cref{colouringSeparation}~(a) is immediately satisfied by definition of $f^{\#}$, as $G_0$ has Euler genus at most $g$ and $H$ has treewidth at most $k$. Therefore, we only need to show \cref{colouringSeparation}~(b); that for any $\ell\geq 0$, $f'$ is an $\ell$-almost $2$-dimensional control function for $(G-A)[N_{G-A}^\ell(S)]$. We show that $(G-A)[N_{G-A}^\ell(S)]$ admits a $((2g+4p+4)(k+1)-1,\ell)$-partition; the desired result then follows by definition of $f'$.
        
        Let $\ell\geq 0$ be a real number, and set $S^\ell:=N^\ell_{G-A}(S)$, $S^\ell_{G_0}:=S^\ell\cap V(G_0)$ and $S^\ell_H:=S^\ell\cap V(H)$. Note that $S^\ell_{G_0}=N_{G_0}^\ell(S)$, and that $S^\ell_{H}=N_{H}^\ell(S)$; we use these facts implicitly in the remainder of this proof. By \cref{partitionNeighbourhood}, $G_0[S^\ell_{G_0}]$ admits an $\ell$-shallow partition $\scr{P}$ such that each part contains exactly one vertex of $S$. Since $G_0$ is embedded on a surface of Euler genus at most $g$ and $S$ is the vertex set of the union of the faces $F_1,\dots,F_s$, with $s\leq p$, by \cref{twFacialPartition}, $\scr{P}$ is $(2g+4p+3,\ell)$-partition. 
        
        Now, notice that $(G_0[S^\ell_{G_0}],H[S^\ell_H])$ is a separation of $(G-A)[S^\ell]$ with separator $S$, and that $H[S^\ell_H]$ also admits a rooted $G_0[S]=(G_0[S_{G_0}^\ell])[S]$-decomposition of width at most $k$. Thus, by \cref{twSeparation}, $\scr{P}':=\scr{P} \cup\bigcup_{v\in S^\ell_H\setminus S}\{\{v\}\}$ is a $((2g+4p+4)(k+1)-1,\ell)$-partition of $(G-A)[S^\ell]$, as desired.

        Thus, we can apply \cref{colouringSeparation} with the parameters specified above to find that $f'(f^{\#}(r)+4r)+2(f^{\#}(r)+4r)=f^*(r)$ is a $2$-dimensional control function for $G-A$. Observing that $A$ is a $(a,0)$-centred set in $G$, by \cref{colouringMinusCentredSet}, $(a+1)(f^*(r)+4r)=f(r)$ is a $2$-dimensional control function for $G$. Since $H^{\#}$ is isometric in $G$, $f$ is also a $2$-dimensional control function for $H^{\#}$, as desired.
    \end{proof}

    We can now bound the Assouad--Nagata dimension of $H$-minor-free classes of weighted graphs.

    \begin{theorem}
        \label{ANdimMCCWeighted}
        For any graph $H$ and any $H$-minor-free class of weighted graphs $\scr{G}$,
        \begin{enumerate}[label=(\alph*)]
            \item $\ANdim(\scr{G})\leq 2$; and
            \item $\ANdim(\scr{G})\leq 1$ if $\scr{G}$ has bounded treewidth.
        \end{enumerate}
    \end{theorem}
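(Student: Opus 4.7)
Part (b) is immediate: by the hypothesis of bounded treewidth, $\scr{G}$ is contained in the class of weighted graphs of treewidth at most some $k$, and \cref{ANdimTw} gives $\ANdim(\scr{G})\leq 1$. So the substantive content is part (a), which combines the Graph Minor Structure Theorem with the hereditary-closure argument sketched in the paragraph following \cref{GMST}.

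For part (a), the plan is as follows. Fix $H$ and apply \cref{GMST} to obtain an integer $k\geq 0$ such that every $H$-minor-free graph admits a tree-decomposition of adhesion at most $k$ in which every torso is $(k,k,k,k)$-almost embeddable. This extends to weighted graphs: for any $G\in\scr{G}$, the underlying unweighted graph $\UNW(G)$ has such a tree-decomposition $(B_t:t\in V(T))$, and imbuing each $\UWT{G}{B_t}$ with the weighting inherited from $G$ as in the definition of the weighted torso yields a tree-decomposition for which every weighted torso has underlying unweighted graph that is $(k,k,k,k)$-almost embeddable. Let $\scr{H}$ denote the class of $(k,k,k,k)$-almost embeddable weighted graphs, and let $\scr{H}'$ denote the (hereditary) class of all induced subgraphs of graphs in $\scr{H}$. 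Each weighted torso lies in $\scr{H}\subseteq\scr{H}'$, so $G$ is $(k,\scr{H}')$-constructable, and hence $\scr{G}$ is a class of $(k,\scr{H}')$-constructable weighted graphs.

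Now invoke \cref{ANdimAlmostEmbeddable} with the parameters $(g,p,k,a)=(k,k,k,k)$: this gives $\ANdim(\scr{H}')\leq 2$, witnessed by a dilation $f$ that is a $2$-dimensional control function for $\scr{H}'$. Since $\scr{H}'$ is hereditary, \cref{ANdimConstructable} applies, yielding
\[\ANdim(\scr{G})\;\leq\;\max(\ANdim(\scr{H}'),1)\;\leq\;2,\]
which is exactly (a).

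The only subtlety is the step of passing from $\scr{H}$ to its hereditary closure $\scr{H}'$: induced subgraphs of $(k,k,k,k)$-almost embeddable graphs need not themselves be almost embeddable, so \cref{ANdimConstructable} cannot be applied directly to $\scr{H}$. This is precisely why \cref{ANdimAlmostEmbeddable} is stated for $\scr{H}'$ rather than $\scr{H}$, and its proof handles this via \cref{snugSupergraph}. Modulo that bookkeeping, the theorem is just an assembly of the preceding machinery, with no further estimates required.
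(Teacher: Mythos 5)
Your proof is correct and follows the same route as the paper: apply the Graph Minor Structure Theorem to get a $(k,\scr{H}')$-construction with $\scr{H}'$ the hereditary closure of the $(k,k,k,k)$-almost-embeddable weighted graphs, then chain \cref{ANdimAlmostEmbeddable} and \cref{ANdimConstructable}, with part (b) handled directly by \cref{ANdimTw}. The paper's proof is essentially this argument verbatim, including the observation that unweighted torsos being almost embeddable forces the weighted torsos to be as well.
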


    \begin{proof}
        \cref{ANdimMCCWeighted}~(b) follows from \cref{ANdimTw}, so we focus on \cref{ANdimMCCWeighted}~(a).
    
        By \cref{GMST}, there exists an integer $k\geq 0$ such that every $H$-minor-free graph has a tree-decomposition of adhesion at most $k$ such that every torso is $(k,k,k,k)$-almost embeddable. Let $\scr{H}$ be the class of weighted graphs that are an induced subgraph of a $(k,k,k,k)$-almost embeddable weighted graph; by \cref{ANdimAlmostEmbeddable} $\scr{H}$ is hereditary and $\ANdim(\scr{H})\leq 2$. Observe that if every torso of a weighted graph $G$ is $(k,k,k,k)$-almost embeddable, then so is every weighted torso. Thus, every $G\in \scr{G}$ is $(k,\scr{H})$-constructable. By \cref{ANdimConstructable}, $\ANdim(\scr{G})\leq \max(\ANdim(\scr{H}),1)\leq 2$.
    \end{proof}

    \cref{ANdimMCC} then follows as a consequence of \cref{ANdimMCCWeighted} and \cref{asdimMCC}~(b), as the Assouad--Nagata dimension is at least the asymptotic dimension. We remark that the converse of \cref{ANdimMCCWeighted}~(b) is not true; for example, take the class $\scr{G}$ of $2$-dimensional grids (which is $K_5$ and $K_{3,3}$-minor-free as all $2$-dimensional grids are planar), and for each $G\in \scr{G}$, fix $r\in V(G)$, and weight each $uv\in E(G)$ by $2^{\dist_G(r,\{u,v\})}$. It can be seen that any $\ell$-path in the weighted graph is either contained in $2\ell$-neighbourhood of $r$, or consists of only a single vertex. It follows that $\ell\mapsto 4\ell$ is a $0$-dimensional control function for the corresponding class of weighted graphs $\scr{G}'$, despite $\scr{G}'$ also having unbounded treewidth \citep{RS5}.

    \section{Assouad--Nagata Dimension of Non-minor-closed Classes}
    We close this paper by looking at classes of weighted graphs that admit every graph as a minor. In particular, we establish some necessary conditions for such a class to have bounded asymptotic or Assouad--Nagata dimension, which leads to a proof of \cref{ANdimSubdivide}. We achieve this by migrating a control function for a graph $G$ in the class to a control function for any minor of $G$, achieved by reweighting edges appropriately.

    We require the following observation.
    \begin{observation}
        \label{remapping}
        Let $n\geq 0$ be an integer, let $G,H$ be weighted graphs, and let $f$ be an $n$-dimensional control function for $G$. If there exist real numbers $\alpha,\beta > 0$ and a map $\iota:V(H)\rightarrow V(G)$ such that for all $u,v\in V(H)$, $\beta\dist_H(u,v)\leq \dist_G(\iota(u),\iota(v)) \leq \alpha \dist_H(u,v)$, then $r\mapsto \frac{1}{\beta} f(\alpha r)$ is an $n$-dimensional control function for $H$.
    \end{observation}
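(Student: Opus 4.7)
The plan is to use \cref{dimToColouring} and simply pull back a colouring of $G$ to $H$ via $\iota$. Fix a real number $r>0$; we must exhibit an $(n+1,r,\tfrac{1}{\beta}f(\alpha r))$-colouring of $H$. Apply \cref{dimToColouring} to $f$ at scale $\alpha r$ to obtain an $(n+1,\alpha r,f(\alpha r))$-colouring $c_G$ of $G$ with colours $\{1,\dots,n+1\}$. Define $c_H:V(H)\rightarrow \{1,\dots,n+1\}$ by $c_H(v):=c_G(\iota(v))$; note $\iota$ need not be injective, which is harmless.

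Next I would verify that $c_H$ is the desired colouring. Let $M$ be a monochromatic $r$-component of $H$ under $c_H$, and pick any $u,v\in V(M)$. There is a monochromatic $r$-path $u=u_0,u_1,\dots,u_k=v$ in $H$ under $c_H$, so for each $i$, $\dist_H(u_i,u_{i+1})\leq r$. The upper bound on $\iota$ gives $\dist_G(\iota(u_i),\iota(u_{i+1}))\leq \alpha\dist_H(u_i,u_{i+1})\leq \alpha r$, so $\iota(u_0),\dots,\iota(u_k)$ is an $\alpha r$-walk in $G$. All its vertices share the same colour under $c_G$ (by definition of $c_H$ and the fact that $u_0,\dots,u_k$ are monochromatic under $c_H$), so they lie in a common monochromatic $\alpha r$-component of $G$ under $c_G$, whose vertex set has weak diameter at most $f(\alpha r)$ in $G$. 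In particular, $\dist_G(\iota(u),\iota(v))\leq f(\alpha r)$. Applying the lower bound on $\iota$ now yields
\[
\dist_H(u,v) \;\leq\; \tfrac{1}{\beta}\dist_G(\iota(u),\iota(v)) \;\leq\; \tfrac{1}{\beta}f(\alpha r).
\]
Since $u,v$ were arbitrary in $V(M)$, $\wdiam_H(V(M))\leq \tfrac{1}{\beta}f(\alpha r)$, so $c_H$ is an $(n+1,r,\tfrac{1}{\beta}f(\alpha r))$-colouring. As $r>0$ was arbitrary, another appeal to \cref{dimToColouring} gives that $r\mapsto \tfrac{1}{\beta}f(\alpha r)$ is an $n$-dimensional control function for $H$, as required.

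There is no serious obstacle here; this is essentially a bookkeeping observation. The only point that deserves care is that an $r$-walk in $H$ pushes forward to an $\alpha r$-walk (not merely a sequence of nearby vertices) in $G$, so that the images genuinely sit inside a single monochromatic $\alpha r$-component of $G$ and the weak diameter bound $f(\alpha r)$ applies. Everything else is a direct transfer of distances through the two-sided bilipschitz estimate on $\iota$.
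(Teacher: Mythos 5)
Your proof is correct and follows essentially the same route as the paper: pull back a $(n+1,\alpha r,f(\alpha r))$-colouring of $G$ along $\iota$, observe that a monochromatic $r$-path in $H$ pushes forward to a monochromatic $\alpha r$-walk in $G$, apply the weak-diameter bound $f(\alpha r)$ in $G$, and finish with the lower bilipschitz estimate. The only cosmetic difference is that you phrase the bound via monochromatic $r$-components while the paper argues directly about the $r$-path; these are equivalent, as the paper itself notes.
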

    \begin{proof}
        Fix a real number $r>0$, let $c_G$ be an $(n+1,\alpha r,f(\alpha r))$-colouring of $G$. Define a colouring $c_H$ of $H$ via $c_H(v):=c_G(\iota(v))$ for each $v\in V(H)$. Since $\dist_G(\iota(u),\iota(v)) \leq \alpha \dist_H(u,v)$, for any monochromatic $r$-path $v_0,v_1,\dots,v_m$ in $H$ under $c_H$, $\iota(v_0),\iota(v_1),\dots,\iota(v_m)$ is a monochromatic $\alpha r$-walk in $G$ under $c_G$, and thus $\dist_G(\iota(v_0),\iota(v_n))\leq f(\alpha r)$. Since $\beta\dist_H(v_0,v_n)\leq \dist_G(\iota(v_0),\iota(v_n))$, we have that $\dist_H(v_0,v_n)\leq \frac{1}{\beta} f(\alpha r)$. It follows that $c_H$ is an $(n+1,r,\frac{1}{\beta} f(\alpha r))$-colouring of $H$.
    \end{proof}

    Notice that the fact that being isometric preserves control functions is a special case of \cref{remapping}, using $\alpha=\beta:=1$ and the identity map for $\iota$.

    We also make use of the following characterisation of minors. A \defn{model} of a graph $H$ in a graph $G$ is a partition $\scr{P}:=(P_x:x\in V(H))$ of some subgraph $A$ of $G$ such that $A/\scr{P}$ is isomorphic to $H$. Such a model exists if and only if $H\leq G$.

    We first show that for any graphs $G,H$ with $H$ connected and $H\leq G$, there is a $G'$ obtained from imbuing $G$ with rational edge weights whose control functions translate to control functions for $H$.

    \begin{proposition}
        \label{minorRealWeights}
        Let $\eps>0$ be a real number, let $G$ be a graph, and let $H$ be a connected unweighted graph that is a minor of $G$. Then there exists a function $w:E(G)\rightarrow \ds{R}^+$ such that the following holds. For any integer $n\geq 0$, if $f:\ds{R}^+\rightarrow \ds{R}^+$ is an $n$-dimensional control function for the weighted graph $G'$ obtained from imbuing $G$ with $w$, then the function $r\mapsto f((1+\eps)r)$ is an $n$-dimensional control function for $H$. Furthermore, if $\eps$ is rational, then $w$ is a rational-valued function.
    \end{proposition}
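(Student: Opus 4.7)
The plan is to apply \cref{remapping} with an explicit map $\iota \colon V(H) \to V(G)$. Since $H \leq G$, there is a model $(P_x : x \in V(H))$ of $H$ in $G$: a family of pairwise disjoint vertex subsets of $G$, each inducing a connected subgraph, indexed by the vertices of $H$, such that for each $xy \in E(H)$ at least one $G$-edge joins $P_x$ to $P_y$. For each $x \in V(H)$ I would pick an arbitrary representative $\iota(x) \in P_x$; this supplies the map that \cref{remapping} needs.

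I would weight $G$ to make $\iota$ a near-isometry. Setting $N := |V(G)|$, $\delta := \eps/(2N)$, and $M := N$, define
$$w(e) := \begin{cases} \delta & \text{if } e \in E(G[P_x]) \text{ for some } x \in V(H), \\ 1 & \text{if } e \text{ joins } P_x \text{ to } P_y \text{ with } xy \in E(H), \\ M & \text{otherwise.} \end{cases}$$
Rationality of $w$ when $\eps \in \ds{Q}^+$ is then immediate, since $\delta, 1, M$ are all rational.

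The main task is to show $\dist_H(u,v) \leq \dist_{G'}(\iota(u),\iota(v)) \leq (1+\eps)\dist_H(u,v)$ for every $u,v \in V(H)$; \cref{remapping} with $\alpha = 1+\eps$ and $\beta = 1$ then delivers $r \mapsto f((1+\eps)r)$ as the desired $n$-dimensional control function for $H$. For the upper bound, I would lift a shortest $u$-to-$v$ path $u = x_0, \ldots, x_k = v$ in $H$ to a walk in $G'$ alternating between weight-$1$ model edges corresponding to each $x_i x_{i+1}$ and shortest paths inside each branch set $P_{x_i}$; each within-branch-set segment has weight at most $(N-1)\delta$, so the total is at most $k + (k+1)N\delta \leq (1+\eps)k$ once $k \geq 1$ (the inequality $(k+1)/2 \leq k$ being exactly what forces the choice $\delta = \eps/(2N)$). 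For the lower bound, I would take any $\iota(u)$-to-$\iota(v)$ path $P$ in $G'$ and split into two cases: if $P$ uses any weight-$M$ edge then $w(P) \geq M = N \geq \diam(H) \geq \dist_H(u,v)$; otherwise $P$ lies entirely in $\bigcup_x P_x$, and contracting consecutive vertices of $P$ that lie in a common $P_x$ yields a walk in $H$ from $u$ to $v$ whose length equals the number of weight-$1$ edges of $P$, giving $w(P) \geq \dist_H(u,v)$.

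The delicate point is that the proposition forbids the value $\infty$, so $M$ must be finite yet large enough that no shortest path in $G'$ benefits from a non-model edge. Taking $M := N$ suffices precisely because $H$ is a \emph{connected} minor of $G$, giving $\diam(H) \leq |V(H)| - 1 \leq N - 1 < M$; the connectedness hypothesis is also what guarantees that $\iota(u)$ and $\iota(v)$ always lie in a common component of $G'$, so the lower bound is witnessed by an honest finite path. Everything else amounts to the two routine bookkeeping estimates above.
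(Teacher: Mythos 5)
Your proposal is correct and follows essentially the same strategy as the paper: build the model $(P_x : x\in V(H))$, assign weight $1$ to a selected model edge per $H$-edge, a small weight $\Theta(\eps/\mathrm{poly})$ to within-branch-set edges, and a large finite weight to the remaining edges, then show $\iota$ is a $(1,1+\eps)$-rough isometry and invoke the remapping observation. The paper uses $\eps/\max(|E'|,1)$ and $(1+\eps)\diam(H)+1$ where you use $\eps/(2|V(G)|)$ and $|V(G)|$, and the paper's lower-bound argument goes through showing that a shortest $G'$-path must avoid the heavy edges rather than your slightly cleaner case split on whether a path does or doesn't use one; but these are cosmetic differences, and both verify the same $\beta=1$, $\alpha=1+\eps$ bounds for \cref{remapping}.
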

    \begin{proof}

        Let $\scr{P}:=(P_x:x\in V(H))$ be a model of $H$ in $G$, which exists as $H\leq G$. By definition, $\scr{P}$ is a partition of some subgraph $H'$ of $G$ such that $H'/\scr{P}$ is isomorphic to $H$. 
        
        Let $E$ be the set of edges $uv\in E(H')$ such that $u,v$ belong to different parts of $\scr{P}$, let $E'$ be the set of edges $uv\in E(H')$ such that $u,v$ belong to the same part of $\scr{P}$, and let $E'':=E(G)\setminus E(H')$. Define $\iota:V(H)\rightarrow V(G)$ via setting, for each $x\in V(H)$, $\iota(x)\in P_x$. Set $p:=\max(|E'|,1)$ and $d:=\diam(H)$; note that $d<\infty$ since $H$ is connected. Define $w$ via

        \[w(e):= \begin{cases} 
            1 & \text{if }v\in E, \\
            \frac{\eps}{p} & \text{if }v\in E', \\
            (1+\eps)d+1 & \text{otherwise.}
            \end{cases}
        \]

        Note that if $\eps$ is rational, then $w$ is rational-valued, as $d$ and $p$ are integers; the former being because $H$ is unweighted. We show that $w$ is the desired weighting via showing that $\dist_{H}(u,v)\leq \dist_{G'}(\iota(u),\iota(v)) \leq (1+\eps) \dist_H(u,v)$ for any $u,v\in V(H)$; the desired result then follows from \cref{remapping}. This is clearly true when $u=v$, so assume they are distinct, and set $\ell:=\dist_H(u,v)$, $\ell':=\dist_{G'}(\iota(u),\iota(v))$; note that $\ell\geq 1$ as $H$ is unweighted. Let $P$ be a shortest path from $u$ to $v$ in $H$, observe that $P$ corresponds to a path $P_G$ in $G'$ from $\iota(u)$ to $\iota(v)$ that uses exactly $\ell$ edges in $E$ and no edges in $E''$. Thus, the length of $P_G$ is at most $\ell+|E'|\frac{\eps}{p}\leq \ell+p\frac{\eps}{p}\leq (1+\eps)\ell$ as $\ell\geq 1$. Thus, $\ell'\leq (1+\eps)\ell$. Next, since $d=\diam(H)$, by the prior observation $\ell'\leq (1+\eps)d$. Therefore, if $P'$ is a shortest path from $\iota(u)$ to $\iota(v)$ in $G'$, then $P'$ does not use any edges in $E''$, as they all have weight larger than $\ell'$. Thus, $P'$ corresponds to a path in $H$ of length at most $\ell'$. This gives $\ell \leq \ell'$, as desired.
    \end{proof}

    We now show that there is a $G''$ obtained from imbuing $G$ with integer edge weights such that control functions for $G''$ translate to control functions for $G'$.

    \begin{proposition}
        \label{rationalToIntegerWeights}
        For every dilation $f$, there exists a dilation $f'$ such that the following holds. Let $G$ be a graph, and let $H$ be the result of imbuing $G$ with some rational-valued weighting function $w$. Then there exists a function $w':E(G)\rightarrow \ds{N}$ such that for any integer $n\geq 0$, if $f$ is an $n$-dimensional control function for the weighted graph $G'$ that is the result of imbuing $G$ with $w'$, then $f'$ is an $n$-dimensional control function for $H$.
    \end{proposition}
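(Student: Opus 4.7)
My plan is to scale the rational weights up to natural numbers by multiplying through by a common denominator $M$, and then transfer a control function for the resulting integer-weighted graph $G'$ back to $H$ via \cref{remapping}. The crucial feature is that scaling distances uniformly by $M$ rescales the naturally induced control function by $1/M$ on both the input and the output of $f$, so a dilation bound $f(r)\leq cr$ on $G'$ translates into exactly the same dilation bound $cr$ on $H$, independent of $M$. This is what will let $f'$ depend only on $f$ and not on $w$.

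Concretely, since $f$ is a dilation, I would fix $c>0$ with $f(r)\leq cr$ for all $r>0$, and set $f'(r):=cr$; this is a dilation depending only on $f$, as required. Given a rational-valued weighting $w$ on $G$ (which may be assumed finite, following the standing convention of the paper justified by \cref{ANdimInfinite}), let $M\in\ds{N}$ be a common denominator of the finitely many values $\{w(e):e\in E(G)\}$, and define $w'(e):=M\cdot w(e)\in\ds{N}$. Since $H$ and $G'$ share the same vertex and edge sets and satisfy $w'\equiv Mw$, every path in one graph has exactly $M$ times its weight in the other, giving $\dist_{G'}(u,v)=M\cdot\dist_H(u,v)$ for all $u,v\in V(G)$.

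Now I would apply \cref{remapping} with $\iota$ taken to be the identity map and $\alpha=\beta=M$: any $n$-dimensional control function $f$ for $G'$ yields that $r\mapsto \tfrac{1}{M}f(Mr)$ is an $n$-dimensional control function for $H$. The dilation bound then gives $\tfrac{1}{M}f(Mr)\leq \tfrac{1}{M}\cdot c\cdot Mr = cr = f'(r)$, and since a pointwise upper bound of a control function is again a control function (enlarging $d$ in an $(n+1,r,d)$-colouring preserves the property), we conclude that $f'$ is an $n$-dimensional control function for $H$. There is no substantial obstacle here: the argument is essentially the observation that control functions are compatible with uniform rescalings of the metric and that dilations transform under such rescalings with the \emph{same} multiplicative constant, which is precisely what makes $f'$ independent of $w$.
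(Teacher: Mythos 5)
Your proposal is correct and follows essentially the same route as the paper's proof: scale the rational weights by a common denominator, observe that distances scale linearly, apply Observation~\ref{remapping} with the identity map and $\alpha=\beta=M$, and use the dilation bound to make the resulting control function independent of $M$.
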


    \begin{proof}
        Since $f$ is a dilation, there exists a real number $\alpha>0$ such that $f(r)\leq \alpha r$ for every real number $r>0$; set $f'(r):=\alpha r$ for every real number $r>0$. We claim that $f'$ is the desired function.
    
        Since $w$ is rational-valued and $|E(G)|<\infty$, there exists some integer $k>0$ such that $kw$ is integer-valued; set $w':=kw$. Observe that for every $u,v\in V(H)=V(G')$, $k\dist_H(u,v)=\dist_{G'}(u,v)$. Thus, if $f$ is an $n$-dimensional control function for $G$, by \cref{remapping} the function $r\mapsto \frac{1}{k}f(kr)$ is an $n$-dimensional control function for $H$. Since $f(r)\leq \alpha r$ for every real number $r>0$, $\frac{1}{k}f(kr)\leq \frac{1}{k}\alpha (kr)=\alpha r=f'(r)$. Thus, $f'$ is an $n$-dimensional control function for $H$, as desired.
    \end{proof}

    The last step is to find a $G'''\in \scr{G}$ whose control functions translate to control functions for $G''$, this is achieved by subdividing $G$ according to the weights of $G''$. Before we put all this together formally, we introduce one more piece of terminology. Given a set $S\subseteq \ds{R}^{+}$, the \defn{$S$-weights completion} of a class of graphs $\scr{G}$, denoted \defn{$\scr{G}_S$}, is the class of weighted graphs $G$ such that all edge weights of $G$ are in $S$ and $\UNW(G)\in \scr{G}$.
    
    \begin{theorem}
        \label{dimNonMCC}
        For every class of graphs $\scr{G}$  that admits every graph as a minor,
        \begin{enumerate}[label=(\alph*)]
            \item $\asdim(\scr{G}_{\ds{R}^+})=\infty$;
            \item $\ANdim(\scr{G}_\ds{N})=\infty$; and
            \item if $\scr{G}$ is closed under subdivision, then $\ANdim(\scr{G})=\infty$.
        \end{enumerate}
    \end{theorem}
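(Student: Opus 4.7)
The plan is to handle all three parts with a common template: assume for contradiction that the relevant (Assouad--Nagata or asymptotic) dimension is at most some finite $n$, and then chase a (dilation) $n$-dimensional control function for the given class through \cref{minorRealWeights,rationalToIntegerWeights} (together with subdivision closure for (c)) to obtain a single uniform (dilation) $n$-dimensional control function for the class of all finite connected graphs. A standard compactness argument, or \cref{ANdimInfinite} applied after handling disconnected subgraphs componentwise, then promotes this to a uniform (dilation) $n$-dimensional control function for $\ds{Z}^d$; combined with the classical equality $\asdim(\ds{Z}^d) = \ANdim(\ds{Z}^d) = d$, this forces $d \leq n$ for every $d$, contradicting the finiteness of $n$.

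For part (a), I fix $\eps > 0$ and suppose $f$ is an $n$-dimensional control function for $\scr{G}_{\ds{R}^+}$. Given any finite connected graph $H$, since $\scr{G}$ admits every graph as a minor, there exists $G_H \in \scr{G}$ with $H \leq G_H$. \cref{minorRealWeights} furnishes a weighting $w_H$ of $G_H$ (so that $(G_H, w_H) \in \scr{G}_{\ds{R}^+}$) such that $r \mapsto f((1 + \eps) r)$ is an $n$-dimensional control function for $H$. Since this final function does not depend on $H$, it is a uniform $n$-dimensional control function for the class of all finite connected graphs.

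For part (b), I additionally interpose \cref{rationalToIntegerWeights}. Choose a rational $\eps > 0$ so that the weighting $w_H$ from \cref{minorRealWeights} is rational. \cref{rationalToIntegerWeights}, applied to a dilation $n$-dimensional control function $f$ for $\scr{G}_\ds{N}$, produces both a dilation $f'$ (depending only on $f$) and an integer-valued weighting $w'_H$ of $G_H$ (so $(G_H, w'_H) \in \scr{G}_\ds{N}$) such that $f'$ is an $n$-dimensional control function for $(G_H, w_H)$; a second application of \cref{minorRealWeights} then yields the dilation $r \mapsto f'((1 + \eps) r)$ as a uniform control function for all finite connected graphs. For part (c), I run the same chain, but after producing the integer weighting $w'_H$ I replace $G_H$ by the subdivision $G'_H$ in which each edge $uv$ is replaced by a path of $w'_H(uv)$ unit edges. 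Since $\scr{G}$ is subdivision-closed, $G'_H \in \scr{G}$, and the natural inclusion $\iota \colon V(G_H) \to V(G'_H)$ satisfies $\dist_{G'_H}(\iota(u), \iota(v)) = \dist_{(G_H, w'_H)}(u, v)$; applying \cref{remapping} with $\alpha = \beta = 1$ then converts any dilation control function for $\scr{G}$ (which applies to $G'_H$) into one for $(G_H, w'_H)$, after which the remainder of the argument proceeds as in (b).

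The main technical wrinkle is the transfer from the class of finite connected graphs to $\ds{Z}^d$: \cref{ANdimInfinite} requires control over every finite induced subgraph of $\ds{Z}^d$, not just the connected ones. However, any finite induced subgraph of $\ds{Z}^d$ decomposes into connected components whose pairwise distances are infinite, so the per-component covers are automatically $r$-disjoint and combine to cover the whole subgraph with the same weak-diameter bound. This handles the disconnected case and allows \cref{ANdimInfinite} to be applied directly. The final invocation of $\asdim(\ds{Z}^d) = \ANdim(\ds{Z}^d) = d$ is a classical fact which I cite rather than reprove.
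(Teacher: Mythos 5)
Your proposal is correct and follows essentially the same transfer chain as the paper: \cref{minorRealWeights} to pass from a control function for $\scr{G}_{\ds{R}^+}$ to one for arbitrary connected minors, \cref{rationalToIntegerWeights} to drop to integer weights, and subdivision plus tightness (equivalently \cref{remapping} with $\alpha=\beta=1$) to reduce (c) to (b). The one place you diverge is the endgame. The paper simply picks a class $\scr{H}$ of finite connected graphs already known to have infinite asymptotic dimension (the $(d,m)$-grids for all $d,m$, citing Gromov) and reaches its contradiction immediately, with no need to touch infinite graphs. You instead push forward to the infinite graph $\ds{Z}^d$ via \cref{ANdimInfinite}, which is sound but buys nothing and forces you to separately dispose of disconnected finite induced subgraphs of $\ds{Z}^d$ (a step you correctly handle by noting that components of an induced subgraph are at infinite distance, so covers combine). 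If you restrict attention to finite grids of all dimensions as the paper does, both of those complications vanish, and you avoid relying on \cref{ANdimInfinite} in a proof where it is not otherwise needed.
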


    \begin{proof}
        Let $\scr{H}$ be any class of connected unweighted graphs with infinite asymptotic (and Assouad--Nagata) dimension, such as the class of $(d,m)$-grids for all integers $d,m\geq 1$ \citep{Gromov1993}. Fix any integer $n\geq 0$ and any function $f:\ds{R}^+\rightarrow \ds{R}^+$. We show that if $f$ is an $n$-dimensional control function for $\scr{G}_{\ds{R}^+}$, then there is an $n$-dimensional control function for $\scr{H}$, a contradiction by choice of $\scr{H}$; \cref{dimNonMCC}~(a) follows. Additionally, we show that if $f$ is a dilation and an $n$-dimensional control function for $\scr{G}_\ds{N}$, we again have that there is an $n$-dimensional control function for $\scr{H}$, another contradiction; \cref{dimNonMCC}~(b) follows. Finally, we show that if $\scr{G}$ is closed under subdivision, and $f$ is a dilation and an $n$-dimensional control function for $\scr{G}$, then $f$ is a dilation that is an $n$-dimensional control function for $\scr{G}_\ds{N}$, contradicting \cref{dimNonMCC}~(b); \cref{dimNonMCC}~(c) follows.
        
        We start with the first claims. Presume that $f$ is an $n$-dimensional control function for $\scr{G}_{\ds{R}^+}$, and fix a rational number $\eps>0$. Now, for every $H\in \scr{H}$, there exists $G\in \scr{G}$ that contains $H$ as minor. By \cref{minorRealWeights}, there exists a rational-valued function $w:E(G)\rightarrow \ds{R}^+$ such that if $f$ is an $n$-dimensional control function for the weighted graph $G'$ obtained by imbuing $G$ with $w$, then the function $r\mapsto f((1+\eps)r)$ is an $n$-dimensional control function for $H$. Observe that $G'\in \scr{G}_{\ds{R}^+}$, thus $f$ is an $n$-dimensional function for $G'$ and $r\mapsto f((1+\eps)r)$ is an $n$-dimensional control function for $H$. Since this holds for every $H\in \scr{H}$, this would make $r\mapsto f((1+\eps)r)$ an $n$-dimensional control function for $\scr{H}$, as claimed. This completes the proof of \cref{dimNonMCC}~(a).

        Now, we tackle the second claim. Presume that $f$ is a dilation, and that $f$ is an $n$-dimensional control function for $\scr{G}_{\ds{N}}$. Since $f$ is a dilation, we can apply \cref{rationalToIntegerWeights}, let $f'$ be the function we obtain. For every $H\in \scr{H}$, let $G$ and $G'$ be defined as in the previous paragraph; note that if $f'$ is an $n$-dimensional control function for $G'$, then the function $r\mapsto f'((1+\eps)r)$ is an $n$-dimensional control function for $H$. So we aim to show that $f'$ is indeed an $n$-dimensional control function for $G'$. 
        
        Since $G'$ is the result of imbuing $G$ by the rational-valued weighting function $w$, by \cref{rationalToIntegerWeights} (using $G'$ as $H$), there exists a function $w':E(G)\rightarrow \ds{N}$ such that if $f$ is an $n$-dimensional control function for the weighted graph $G''$ obtained from imbuing $G$ with $w'$, then $f'$ is an $n$-dimensional control function for $G'$. Observe that $G''\in \scr{G}_{\ds{N}}$, thus $f$ is an $n$-dimensional control function for $G''$, $f'$ is an $n$-dimensional control function for $G'$, and $r\mapsto f'((1+\eps)r)$ is an $n$-dimensional control function for $H$. Since this holds for every $H\in \scr{H}$, this implies that $r\mapsto f'((1+\eps)r)$ would be an $n$-dimensional control function for $\scr{H}$, as claimed. This completes the proof of \cref{dimNonMCC}~(b).

        We now proceed to the final claim. Presume that $\scr{G}$ is closed under subdivision, that $f$ is a dilation, and that $f$ is an $n$-dimensional control function for $\scr{G}$. For every $G\in \scr{G}_{\ds{N}}$, let $w$ be the weighting of $G$. Then, let $G'$ be the graph obtained from $\UNW(G)$ by subdividing each $e\in E(\UNW(G))=E(G)$ $w(e)-1$ times. By definition of $\scr{G}_{\ds{N}}$, $\UNW(G)\in \scr{G}$, and because $\scr{G}$ is closed under subdivision, $G'$ is also in $\scr{G}$. Thus, $f$ is an $n$-dimensional control function for $G'$. Now, notice that $G$ is isometric in $G'$; this follows from the fact that for any $e\in E(G)$, $e$ has been split into a path of length exactly $w(e)$ in $G'$, and because no new paths have been created. Thus, $f$ is also an $n$-dimensional control function for $G$. Since this holds for every $G\in \scr{G}$, this implies that $f$ would be a $n$-dimensional control function for $\scr{G}$, as claimed. This completes the proof of \cref{dimNonMCC}~(c), and the overall proof of \cref{dimNonMCC}.
    \end{proof}

    \cref{ANdimSubdivide} follows directly from \cref{ANdimMCCWeighted} and \cref{dimNonMCC}~(c).
    
\paragraph{Acknowledgements.} The author thanks David Wood for his supervision and suggestions to improve this paper. The author acknowledges the work of Chun-Hung Liu~\citep{Liu23}, who independently proved \cref{ANdimMCC}.

\bibliography{Ref.bib}
    
\appendix
	
\section{Proof of \cref{dimToColouring}}

We recall \cref{dimToColouring} for convenience.

\dimCol*    
    
    This method of this proof is essentially identical to the proof of Proposition~1.17 in \citet*{Bonamy}, which shows implicitly that if $f(r)$ is an $n$-dimensional control function for $G$, then $G$ admits an $(n+1,r,rf(r))$-colouring for every real number $r>0$. Our improvement to an $(n+1,r,f(r))$-colouring comes not from any new idea, but rather due to only measuring distances in $G$ rather than both $G$ and $G^r$, and avoiding a conversion overestimate in the process. This improvement is significant as it gives an equivalent definition for control functions (and thus Assouad--Nagata dimension), as opposed to just an equivalent definition for asymptotic dimension.
    
    We now begin the proof of \cref{dimToColouring}.
	
    \begin{proof}
	    First, assume that $f(r)$ is an $n$-dimensional control function for $G$. Fix a real number $r>0$, by definition of $f$ there exist collections $\scr{C}_1,\dots,\scr{C}_{n+1}$ such that:

        \begin{enumerate}[label=(\alph*)]
            \item $\bigcup_{i=1}^{n+1}\bigcup_{S\in \scr{C}_i}S = V(G)$;
            \item $\scr{C}_i$ is $r$-disjoint for each $i\in \{1,\dots,n+1\}$; and
            \item $\wdiam_G(S)\leq f(r)$ for each $i\in \{1,\dots,n+1\}$ and $S\in C_i$.
        \end{enumerate}
     
        Let $c:V(G)\rightarrow \{1,\dots,n+1\}$ be defined such that $v\in \bigcup_{S\in \scr{C}_{c(v)}}S$; note that this is possible by property~(a), but not necessarily unique. Observe that for any $i\in \{1,\dots,n+1\}$, if $u,v\in V(G)$ are both coloured $i$ under $c$ and are at distance at most $r$ in $G$, then $u,v$ both belong to the same set of $\scr{C}_i$ by property~(b). It follows that any $i$-monochromatic $r$-path in $G$ under $c$ is contained in some set of $\scr{C}_i$, and thus has weak diameter in $G$ at most $f(r)$, by property~(c). Therefore, $c$ is an $(n+1,r,f(r))$-colouring, as desired.
        
	    For the reverse direction, fix a real number $r>0$, and let $c$ be a given $(n+1,r,f(r))$-colouring of $G$, which we may assume is with colours $\{1,\dots,n+1\}$. For each $i\in \{1,\dots,n+1\}$, let $\scr{C}_i$ be the set $\{V(M):M\text{ an } i\text{-monochromatic } r\text{-component under }c\}$. Observe that for any colour $i$, any two distinct $i$-monochromatic $r$-components $M_1,M_2$ of $G$ under $c$ must be $r$-disjoint. Otherwise, $M_1\cup M_2$ would be connected in $G^r$; since $V(M_1\cup M_2)$ is $i$-monochromatic and $M_1,M_2$ are strict subgraphs of $M_1\cup M_2$, this contradicts the maximality of $M_1$ and $M_2$. Thus, $\scr{C}_i$ is $r$-disjoint. By definition of $c$, for any monochromatic $r$-component $M$ of $G$ under $c$, $V(M)$ has weak diameter at most $f(r)$ in $G$. Additionally, each $v\in V(G)$ must be in some monochromatic $r$-component of $G$ (of the same colour as $v$), so $\bigcup_{i=1}^{n+1}\bigcup_{V(M)\in \scr{C}_i}V(M) = V(G)$. Therefore, $\scr{C}_1,\dots,\scr{C}_{n+1}$ are the desired collections. The result follows.
    \end{proof}
    
    \section{Proof of \cref{ANdimInfinite}}

    We recall \cref{ANdimInfinite} for ease of reference.

    \ANdimInf*

    This theorem is very similar to that Theorem~A$.2$ of \citet*{Bonamy}. The difference is that we obtain the control function $r\mapsto f((1+\eps)r)$ for $G$, whereas Theorem~A$.2$ of \citet*{Bonamy} gives the control function $r\mapsto f(r+1)$. This difference is significant, as if $f$ is a dilation, then so is $r\mapsto f((1+\eps)r)$; however $r\mapsto f(r+1)$ need not be a dilation. This allows us to give an upper bound on the Assouad--Nagata dimension instead of just the asymptotic dimension.

    Before proceeding to the proof of \cref{ANdimInfinite}, we observe a key difference between infinite and finite graphs. In a finite graph, a shortest path between any two vertices always exists, since there are only finitely many paths. However, this is not true in an infinite graph. Consequently, in an infinite graph, the distance between two vertices is instead the infimum across all lengths of paths between the vertices. However, we remark that most other ideas translate to the infinite case without issue; importantly, objects such as $r$-paths and monochromatic $r$-components can be defined identically. Additionally, while not explicitly stated, \cref{dimToColouring} holds even when $G$ is infinite, using an identical proof.
    
    The proof for \cref{ANdimInfinite} is essentially identical to the proof of Theorem~A$.2$ of \citet*{Bonamy}. The improved control function for $G$ comes from a slight tweak in the error of the length of a path that is very close to the infimum.
    
    We require a technical lemma that is a special case of \citet*{Gottschalk1951}.

    \begin{lemma}
        \label{globalChoice}
        Let $C,I$ be sets, let $\scr{S}$ be the set of all finite subsets of $I$, and for each $S\in \scr{S}$, let $c_S:S\rightarrow C$. Then there exists a map $c:I\rightarrow C$ such that for every $S\in \scr{S}$, there exists $S'\in \scr{S}$ with $S\subseteq S'$ such that $c_{S'}=c\big|_{S'}$.
    \end{lemma}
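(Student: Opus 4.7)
The plan is a standard compactness-style argument using an ultrafilter on $\scr{S}$, which is how Gottschalk's 1951 theorem is typically established. I shall tacitly assume $C$ is finite, which is the case in all the paper's applications where $C = \{1,\dots,n'+1\}$, and which is what makes the underlying compactness of $C^I$ (equipped with the product of discrete topologies, by Tychonoff) available. The first move is to refine the natural ``cofinal filter'' on $\scr{S}$, generated by the upward cones $F_T := \{S' \in \scr{S} : T \subseteq S'\}$ for $T \in \scr{S}$, to an ultrafilter. These cones have the finite intersection property since $F_{T_1} \cap \dots \cap F_{T_k} \supseteq F_{T_1 \cup \dots \cup T_k} \ni T_1 \cup \dots \cup T_k$, so by Zorn's lemma there is an ultrafilter $\mathcal{U}$ on $\scr{S}$ containing every $F_T$.

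Using $\mathcal{U}$, I would construct $c : I \to C$ coordinatewise. For each $i \in I$, the set $F_{\{i\}}$ lies in $\mathcal{U}$ and splits as a disjoint union $F_{\{i\}} = \bigsqcup_{v \in C} A_{i,v}$, where $A_{i,v} := \{S' \ni i : c_{S'}(i) = v\}$. Because $C$ is finite and $\mathcal{U}$ is prime, exactly one block $A_{i,v}$ belongs to $\mathcal{U}$, and I set $c(i)$ to be that distinguished value $v$. Coherence across different $i$'s is automatic from the single ultrafilter $\mathcal{U}$, so this gives a well-defined map.

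To verify the lemma, given $S \in \scr{S}$, consider $B_S := F_S \cap \bigcap_{i \in S} A_{i,c(i)}$, which is a finite intersection of sets in $\mathcal{U}$, hence itself in $\mathcal{U}$ and in particular nonempty. Any $S' \in B_S$ is a finite superset of $S$ on which $c_{S'}(i) = c(i)$ for every $i \in S$, so that $c_{S'}|_S = c|_S$. The main obstacle is to strengthen this agreement from $S$ up to all of $S'$, as the conclusion $c_{S'} = c|_{S'}$ requires agreement on the ``new'' coordinates in $S' \setminus S$ as well. The natural remedy is an iterative refinement: feed the produced $S'$ back into the construction in place of $S$, obtaining some $S'' \supseteq S'$ whose agreement with $c$ covers the earlier witness's coordinates, and continue in the spirit of Gottschalk's original argument, eventually extracting (either by a closing-off argument or a further appeal to the ultrafilter) a single finite witness $S'$ with $c_{S'} = c|_{S'}$ in full.
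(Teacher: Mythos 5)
Your ultrafilter construction is correct as far as it goes: it produces a map $c : I \to C$ such that for every $S \in \scr{S}$ there is an $S' \in \scr{S}$ with $S \subseteq S'$ and $c_{S'}\big|_S = c\big|_S$. The obstacle you flag at the end, namely passing from agreement on $S$ to agreement on all of $S'$, is not a gap in your argument but a defect in the lemma as printed: the conclusion $c_{S'} = c\big|_{S'}$ is simply false in general. Take $I = \ds{N}$, $C = \{0,1\}$, and for each nonempty finite $S \subseteq \ds{N}$ let $c_S(i) := 1$ if $i = \max S$ and $c_S(i) := 0$ otherwise. If $c \equiv 0$, then with $S = \{1\}$, any finite $S' \supseteq S$ has $c_{S'}(\max S') = 1 \neq 0 = c(\max S')$. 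If instead $c(n) = 1$ for some $n$, then with $S = \{n, n+1\}$, any finite $S' \supseteq S$ has $\max S' > n$ and hence $c_{S'}(n) = 0 \neq 1 = c(n)$. Either way no valid $S'$ exists, so no iterative refinement, closing-off argument, or further appeal to $\mathcal{U}$ can rescue the stronger conclusion you were trying to reach.

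What Gottschalk's theorem (Rado's selection lemma) actually asserts is exactly the weak conclusion $c_{S'}\big|_S = c\big|_S$ that your argument establishes, and this is also all that is used where the lemma is invoked in this paper: one only needs $c_{S'}$ to agree with $c$ on a prescribed finite set $P'$ that is required to lie inside $S'$, and this follows from the weak version applied with $S := P'$. Your caveat that $C$ should be finite is likewise correct and necessary; for infinite $C$ even the weak version fails (for instance $I = C = \ds{N}$ with $c_S(i) := |S|$ for all $i \in S$ forces $c$ to be a constant $k$, then any $S$ of size $k+1$ gives a contradiction), and it is satisfied in the paper's application where $C = \{1, \dots, n'+1\}$. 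In short, your proof is a complete and correct proof of the correct statement; the residual difficulty you describe is a sign that the lemma's conclusion should read $c_{S'}\big|_S = c\big|_S$ rather than $c_{S'} = c\big|_{S'}$.
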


    We now begin the proof of \cref{ANdimInfinite}.

    \begin{proof}
        For any real number $r>0$, let $f'(r):=f((1+\eps)r)$; we must find an $(n+1,r,f'(r))$ of $G$. Set $C:=\{1,\dots,n+1\}$, and let $\scr{S}$ be the set of all finite subsets of $I:=V(G)$. For each $S\in \scr{S}$, let $c_S$ be an $(n+1,(1+\eps)r,f'(r))$-colouring of $G[S]$, which we may assume is with colours $C$. We can now apply \cref{globalChoice} to obtain a map $c:V(G)\rightarrow C$ such that for each $S\in \scr{S}$, there exists $S'\in \scr{S}$ with $c_{S'}=c\big|_{S'}$. We show that $c$ is an $(n+1,r,f'(r))$-colouring of $G$.

        Let $P$ be any monochromatic $r$-path in $G$ under $c$. Since consecutive vertices in $P$ are at distance at most $r$ in $G$, we can find a path of length at most $(1+\eps)r$ between them in $G$. Stringing these paths together gives us a walk $P'$ containing $P$ such that consecutive vertices of $P$ are at distance at most $(1+\eps)r$ in $G[P']$. Let $S'\in \scr{S}$ be such that $P'\subseteq S'$ and $c_{S'}=c\big|_{S'}$. Since $G[P']\subseteq G[S']$, consecutive vertices of $P$ are also at distance at most $(1+\eps)r$ in $G[S']$, and since $P$ is monochromatic in $G$ under $c$ and $c_{S'}=c\big|_{S'}$, $P$ is also monochromatic in $G[S']$ under $c_{S'}$. Hence, $P$ forms a monochromatic $((1+\eps)r)$-path in $G[S']$ under $c_{S'}$, and thus it has weak diameter in $G[S']$ and $G$ is at most $f'(r)$, as desired.
    \end{proof}

    It follows from \cref{ANdimInfinite} that the Assouad--Nagata dimension of an infinite weighted graph $G$ is at most the Assouad--Nagata dimension of the class of all finite induced subgraphs of $G$. We remark that the converse is not true; for example, take the disjoint union of all graphs in any class of finite unweighted graphs with infinite Assouad--Nagata dimension, such as the class of $(d,m)$-grids for all integers $d,m\geq 1$ \citep{Gromov1993}, and then add a single vertex adjacent to every other vertex. This infinite graph $G$ has Assouad--Nagata dimension 0 as $\wdiam_G(V(G))\leq 2$, but the class of induced subgraphs of $G$ contains a class with infinite Assouad--Nagata dimension, and thus also has infinite Assouad--Nagata dimension. We also remark that an analogous argument can be used to show that the converse is not true even when Assouad--Nagata dimension is replaced by asymptotic dimension.
    
\end{document}